\numberwithin{equation}{subsection}
\newcommand{\ra}{\rightarrow}
\newcommand{\lra}{\longrightarrow}
\newcommand{\8}{\infty}
\newcommand{\p}{\prime}
\newcommand{\pt}{\partial}
\newcommand{\eset }{\emptyset}
\newcommand{\al}{\alpha}
\newcommand{\om}{\omega}
\newcommand{\gam}{\gamma}
\newcommand{\vp}{\varphi}
\newcommand{\lam}{\lambda}
\newcommand{\q}{\theta}
\newcommand{\be}{\beta}
\newcommand{\dt}{\delta}
\newcommand{\Zbb}{\mathbb{Z}}
\newcommand{\Pbb}{\mathbb{P}}
\newcommand{\Cbb}{\mathbb{C}}
\theoremstyle{plain} 
\newtheorem{THM}{Theorem}[section]
\newtheorem{DEF}[THM]{Definition}
\newtheorem{EX}[THM]{Example}
\newtheorem{QUE}[THM]{Question}
\newtheorem{ILL}[THM]{Illustration}
\newtheorem{PROP}[THM]{Proposition}
\newtheorem{LEM}[THM]{Lemma}
\newtheorem{COR}[THM]{Corollary}
\newtheorem{REM}[THM]{Remark}
\newtheorem{CONJ}[THM]{Conjecture}
\newcommand{\bt}{\bullet}
\newcommand{\Ac}{\mathcal{A}}
\newcommand{\Bc}{\mathcal{B}}
\newcommand{\img}{\mathrm{im}}
\newcommand{\Vc}{\mathcal{V}}
\newcommand{\Uc}{\mathcal{U}}
\newcommand{\Oc}{\mathcal{O}}
\newcommand{\red}{\mathrm{red}}
\newcommand{\Hom}{\mathrm{Hom}}
\newcommand{\Nc}{\mathcal{N}}
\newcommand{\Gc}{\mathcal{G}}
\newcommand{\Fc}{\mathcal{F}}
\newcommand{\Xfr}{\mathfrak{X}}
\newcommand{\Tc}{\mathcal{T}}
\newcommand{\Ufr}{\mathfrak{U}}
\newcommand{\Scl}{\mathcal{S}}
\newcommand{\Xc}{\mathcal{X}}
\newcommand{\Mcl}{\mathcal{M}}
\newcommand{\Yfr}{\mathfrak{Y}}
\newcommand{\Qcl}{\mathcal{Q}}
\definecolor{airforceblue}{rgb}{0.36, 0.54, 0.66}
\definecolor{burgundy}{rgb}{0.5, 0.0, 0.13}
\definecolor{majorelleblue}{rgb}{0.38, 0.31, 0.86}
\definecolor{darkblue}{rgb}{0.0, 0.0, 0.55}
\newcommand{\RNum}[1]{\uppercase\expandafter{\romannumeral #1\relax}}
\title[Fam. Smflds., Splt. Types and Obstr. Maps]
{Families of Supermanifolds: Splitting Types and Obstruction Maps
\\}
\author{\small Kowshik Bettadapura}
\date{}
\begin{document}

\begin{abstract} 
In this article we study the notion of supermanifolds families, starting from Green's general classification of supermanifolds. The topics studied divide this article into two distinct parts, labelled \RNum{1} and \RNum{2} respectively. Part \RNum{1} concerns the splitting type of a supermanifold and our attempt to construct variations thereof. In Part \RNum{2} we look at a particular class of families, referred to as `$gtm$-families'. We are concerned with the notion of a `secondary obstruction theory', which is apparent in any $gtm$-family of supermanifolds over a (connected and Stein) non-reduced, superspace base. 
\\\\
\emph{Mathematics Subject Classification}. 14H15, 32C11, 55N30, 58A50
\\
\emph{Keywords}. Complex supermanifolds, families, obstructions
\end{abstract}

\maketitle

\setcounter{tocdepth}{1}
\tableofcontents

\onehalfspacing

\section*{Introduction}

\noindent
In this article we study families of supermanifolds over (I) a Stein base; and (II) a Stein superspace base. Hence, the article can be divided into two parts with the former, (I), forming the foundations for the latter (II). The questions addressed in each part are quite distinct. In the former, (I), we are primarily concerned with the splitting type. In the latter, (II), we introduce a notion of a `secondary obstruction to splitting' and consider an obstruction problem in analogy with previous work by the author in \cite{BETTPHD, BETTOBSTHICK, BETTAQ}.

We summarise the motivation and content of this article in what follows.

\subsection*{Part I: The Splitting Type}

\subsubsection*{Motivation}
The splitting type of a complex supermanifold serves as a first step toward a general classification. The existence of exotic structures however undermines the power of a classification based on splitting type alone. For more details on exotic structures on supermanifolds, see \cite{DW1, BETTHIGHOBS}. The original motivation behind this article was to study these exotic structures by constructing `variations of splitting type', which we take to be a family wherein nearby fibers are exotic structures on some fixed supermanifold. This leads to a motivating problem:
\begin{align*}
\begin{array}{l}
\mbox{{\bf Problem.}\emph{ Construct an isotrivial family of}}\\
\mbox{\emph{supermanifolds with varying splitting type.}}
\end{array}
\end{align*}
~
\subsubsection*{Brief Summary} The first part of this article catalogues our attempt to construct variations of splitting type. After presenting a number of definitions and general constructions we arrive at our main result which, in a sense, realises the opposite of our initial objective. Our main result is Theorem \ref{rg784gf794hf98hf0j3fj34444} where we find: over a connected, Stein base, the splitting type of the fiber of a supermanifold family will generally \emph{not} vary with respect to parameters on the base. This leads to a proposal that, over a general base, the splitting type might vary semi-continuously, in analogy with a classical result by Kodaira and Spencer on the dimension of the deformation space of a complex manifold\footnote{i.e., the dimension of its first cohomology, valued in its tangent sheaf.}.

\subsubsection*{Outline and Main Results}
The definition of supermanifold families which we propose in Definition \ref{fbvryuvuyviuoijfioejfioe} mirrors the classical definition for complex manifolds given in \cite{KS}. We refer to the notion of embedding detailed by Donagi and Witten in \cite{DW1} and studied further in \cite{BETTEMB}. The rudiments of supermanifolds and embeddings thereof is recalled in Section \ref{fjnvkbvueuvbuenviepve}. Subsequently, in Section \ref{fihrhf948hf84jf0j40fj903} we are concerned with the definition of supermanifold families, the notion of a `splitting type deformation' (Definition \ref{fvuirviurvieiojowjwo}) and some immediate constructions, such as the `classifying diagrams' and `classifying maps'. With these we comment on the splitting problem for the total space of supermanifold families in Proposition \ref{rf4h78fh498fh9f038j} and Corollary \ref{rf847gf784hf9hf983}. In Section \ref{fvirubvuirbiurburnornoir} we look to import the notion of `(local) triviality' for complex analytic families in \cite{KS} to supermanifold families. Generally, we view a supermanifold as being a structure associated to the data of a \emph{model} $(X, T^*_{X, -})$, where $X$ is a complex manifold and $T^*_{X, -}$ is a holomorphic vector bundle. As such, a \emph{family of supermanifolds} over a base $B$ consists of a supermanifold modelled on the data $(Y, T^*_{Y, -})$, with fiber over a point in $B$ being some supermanifold modelled on $(X, T^*_{X,-})$. Hence, in order to define a supermanifold family, we need to realise the model of the total space $(Y, T^*_{Y, -})$ as a `family of models' over $B$ with `typical fiber' the model $(X, T^*_{X, -})$. It is to such families of models $(Y, T^*_{Y, -})$ that we ascribe a kind of `triviality'. We consider three kinds of triviality: \emph{local triviality, isotriviality} and \emph{global triviality}. This section culminates in Definition \ref{rf784gf794hf984hf84hf0} where supermanifold families are defined to be of type $ltm$-, $ism$-, or $gtm$-, depending on whether its total space is modelled on locally trivial, isotrivial or globally trivial model. Subsequent considerations in this article only concern the $gtm$-families. Section \ref{tgh748g749hg84093j93j333} begins by defining the notion of analyticity for supermanifold families in general (Definition \ref{rfh489f98hfj3039k}). The objective in this section is to then study how the splitting type might vary in \emph{analytic, $gtm$-families}. Our investigation culminates in Theorem \ref{rg784gf794hf98hf0j3fj34444} which asserts, in essence, that the splitting type will not vary in $gtm$-families over a connected, Stein base. The proof of Theorem \ref{rg784gf794hf98hf0j3fj34444} involves a useful, tensor product decomposition of the obstruction class to splitting the total space of a $gtm$-family over a Stein base, derived in Proposition \ref{rh973hf983hf80jf093fj}. This is important in forming the notion of a `generically isotrivial family' in Definition \ref{ryurgfyugfiuhfueofioefoeo}. The remainder of this section is then devoted to some applications of Theorem \ref{rg784gf794hf98hf0j3fj34444} and consequences of its proof. Section \ref{rfg78gf78gf9h38fh3f03} concludes the first part of this article. Here we revisit a classical construction by Rothstein in \cite{ROTHDEF} of a one dimensional family of supermanifolds. This is referred to as \emph{Rothstein's deformation}. Our objective is to illustrate many of the concepts introduced so far in this article. We show that Rothstein's deformation is an example of a \emph{$gtm$-family of supermanifolds over a connected Stein base that is generically isotrivial}. We end Section \ref{rfg78gf78gf9h38fh3f03} on Theorem \ref{rf8g874g974h98fh8fj309j} showing how Rothstein deformations can be glued over the projective line, thereby yielding an example of a family of supermanifolds over a non-Stein base.

\subsection*{Part II: Obstruction Maps}

\subsubsection*{Motivation}
A common example running through this article are the deformations of super Riemann surfaces, studied in \cite{DW1, DW2, BETTPHD, BETTSRS}. They provide the central motivation behind many of the definitions given. As observed in Remark \ref{fkvnkbvbvnvinvoinenvoe} however, the first part of this article only concerns supermanifold famlies over a \emph{reduced} base, whereas deformations of super Riemann surfaces would be supermanifold families over a non-reduced, i.e., superspace base\footnote{in the above-cited articles, it is $\Cbb^{0|n}$.}. It is therefore desirable to extend the theory in the first part to families over superspace bases. In \cite[p. 29]{DW2} deformations of any split model were given by reference to an almost complex structure. Yet, these methods do not generalise to give deformations of non-projected supermanifolds. Thus a motivating problem here is taken to be:
\begin{align*}
\begin{array}{l}
\mbox{{\bf Problem.} \emph{Construct deformations of non-projected}} 
\\
\mbox{\emph{supermanifolds over a superspace base}.}
\end{array}
\end{align*}

\subsubsection*{Brief Summary}
This part of the article is an attempt by the author to give a general notion of supermanifold families over superspace bases. The main result here is in arriving, from our definitions, at an obstruction problem of a similar kind considered in \cite{BETTOBSTHICK, BETTAQ}. We derive a host of `obstruction maps' and conjecture that they can be reduced to an explicit construction involving a certain class associated to the model of the total space. This is confirmed in a particular case.

\subsubsection*{Outline and Main Results}
Section \ref{fjbvhvvyuviruriormprmopvmrp} begins with our definition of a supermanifold family over a superspace base. As in the previous part, we focus on the notion of a `family of models' in Definition \ref{rfh74gf9hf830fj93jf} and use this as a basis for defining a family of supermanifolds in Definiton \ref{ckjvbbvjfbvhjbnckld}. We are only interested in families of $gtm$-type here over a Stein superspace base\footnote{A Stein superspace base is taken to be a superspace whose reduced part is a Stein space.}.  Our definition of a supermanifold family over a superspace base is made so as to build on supermanifold families over reduced bases in the first part. Thus, any $gtm$-family of supermanifolds over a superspace base will contain a $gtm$-family of supermaifolds over a reduced base, referred to as its \emph{underlying family}. This leads naturally to a `projectability problem' for $gtm$-families, which is illustrated by appeal to deformations of super Riemann surfaces in Example \ref{rgf784gf73hf83h0j9f33333}. We turn now to the problem of comparing the obstruction class to splitting the total space to the class splitting the underlying family. This is studied in analogy with a similar problem in the previous part concerning the embedding of the fiber of a supermanifold family, which involved the derivation of a `compatibility diagram' (see, Illustration \ref{h78gf87g9h38hf03f093j}). The compatibility diagram in the present case is more complicated however since the model of a $gtm$-family, $(Y, T^*_{Y, -})$, can itself be `non-trivial'. This is in the following sense: the odd cotangent bundle $T^*_{Y, -}$ of the model is taken to be an extension of sheaves and its extension class, defined to be the model class, need not be trivial. In particular, the exterior powers of $T^*_{Y, -}$ will admit a filtration and therefore so will the obstruction space of the $gt$-model. In Proposition \ref{fjdkbvjvbebvuibvevbeo} we see how the filtration on the exterior powers $\wedge^{j^\p}T^*_{Y -}$ are related to the compatibility diagram coming from the embedding of the underlying family into the total space. Section \ref{fjvbgbvirbuiburnvoenoinveo} is devoted to unpacking Proposition \ref{fjdkbvjvbebvuibvevbeo} and further studying this filtration. We begin with the observation: If the obstruction to splitting the underlying family vanishes, then there will exist a `seconadary obstruction' to splitting the total space of the $gtm$-family. Hence we are led to a refined notion of `obstruction theory for $gtm$-families' in analogy with work in \cite{BETTOBSTHICK}, established more precisely in Question \ref{rfh849hf84jf0jf933333}. We derive a bounded, differential complex in Proposition \ref{rg78gf87gf973h98hf08h3} whose differentials measure the failure to resolve Question \ref{rfh849hf84jf0jf933333} in the affirmative. This is the content of Proposition \ref{rfh74fg874f93hf83j0f93}. In effort to further understand the differentials in this complex, we present a parallel construction of maps which reference the model class of the $gt$-family explicitly. In Conjecture \ref{rhf74gf74gf983hf80309fj30} we propose a relation between these latter, explicit constructions and the differentials. We conclude on Proposition \ref{fjbbvubiuvbuenoienienp} which is an affirmation of Conjecture \ref{rhf74gf74gf983hf80309fj30} in a particular instance. Its proof is deferred to Appendix \ref{rfh97hfg9hf83jf09jf9j333}. We conclude this article with Section \ref{rygf8g478gf794hf893h893h03333333} where a brief discussion of obstruction complexes is given. We present some general questions and conjectures pertaining to the generalisation of the compatibility diagram between obstruction spaces to obstruction complexes.
\\\\
{\bf Remark.}
It is important to mention that, in the algebraic setting, Vaintrob in \cite{VAIN} considered the problem of deformations of supermanifolds in considerable generality. There is however little overlap between the present work and Vaintrob's study since, in the latter, it is unclear as to what role is played by splitting types and secondary obstructions.
\\\\
\emph{Disclaimer.} The spaces and maps considered in this article are assumed to be smooth and defined over the complex numbers; and the abelian sheaves are all locally free.

\part{Splitting Types}

\section{Preliminaries: Supermanifolds and Embeddings}
\label{fjnvkbvueuvbuenviepve}

\subsection{Supermanifolds}
The notion of supermanifold to be understood in this article follows that in \cite{BETTEMB, BETTHIGHOBS, BETTAQ}. We refer to these articles for further details and only sketch the rudiments here. Let $(Y, T^*_{Y, -})$ be a pair comprising a complex manifold $Y$ and holomorphic vector bundle $T^*_{Y, -}$. The manifold $Y$ is referred to as the \emph{reduced space} and $T^*_{Y, -}$ as either the \emph{odd cotangent bundle} or \emph{odd conormal bundle}. 

\begin{DEF}
\emph{The \emph{dimension} of a model $(Y, T^*_{Y, -})$ is taken to be the pair $(p^\p|q^\p)$ where $p^\p = \dim Y$ and $q^\p = \mathrm{rank}~T^*_{Y, -}$. If we wish to either $p^\p$ or $q^\p$ then we will write $\dim_+(Y, T^*_{Y, -}) = p^\p$ and $\dim_-(Y, T^*_{Y, -}) = q^\p$.}
\end{DEF}

\noindent
Green in \cite{GREEN} gave a general classification of supermanifolds modelled on $(Y, T^*_{Y, -})$ via the \v Cech cohomology set $\mbox{\v H}^1\big(Y, \mathcal G_{T^*_{Y, -}}^{(2)}\big)$, where $\Gc_{T^*_{Y, -}}^{(2)}$ is a filtered sheaf of groups with filtered pieces $\{\Gc^{(j^\p)}_{T^*_{Y, -}}\subset \Gc_{T^*_{Y, -}}^{(2)}\}_{j^\p = 3, \ldots, \dim_-(Y, T^*_{Y, -})}$. Berezin in \cite{BER} and, more recently, Donagi and Witten in \cite{DW1} observed that the \v Cech cohomology of the $j^\p$-th filtered piece $\mbox{\v H}^1\big(Y, \Gc^{(j^\p)}_{T^*_{Y, -}}\big)$ classifies `supermanifolds with level $j^\p$-structure'. Our observation is that the object $\mbox{\v H}^1\big(Y, \Gc^{(j^\p)}_{T^*_{Y, -}}\big)$ can be formed by reference to the model $(Y, T^*_{Y, -})$ alone and does not require knowledge of the conventional definition of supermanifolds as locally ringed spaces, which can be found e.g., in \cite{BER, YMAN, QFAS}. Given a model $(Y, T^*_{Y, -})$ then, we are contented with the following definition of a supermanifold \emph{with prescribed splitting type}.

\begin{DEF}\label{furbfbiuff3f03jf3jf}
\emph{A supermanifold modelled on $(Y,T^*_{Y, -})$ with splitting type $j^\p>1$ is a representative of a class in $\mbox{\v H}^1\big(Y, \Gc^{(j^\p)}_{T^*_{Y, -}}\big)$.}
\end{DEF}

\noindent
The set $\mbox{\v H}^1\big(Y, \Gc^{(j^\p)}_{T^*_{Y, -}}\big)$ is a pointed set for each $j^\p$ and the inclusion $\Gc^{(j^\p)}_{T^*_{Y, -}}\subset \Gc^{(j^{\p\p})}_{T^*_{Y, -}}$ induces a map of pointed sets $\mbox{\v H}^1\big(Y, \Gc^{(j^\p)}_{T^*_{Y, -}}\big) \ra \mbox{\v H}^1\big(Y, \Gc^{(j^{\p\p})}_{T^*_{Y, -}}\big)$ for any $j^\p> j^{\p\p}$. In particular, the base point is preserved. Now $\Gc_{T^*_{Y, -}}^{(N)} = (1)$ if $N> \dim_- T^*_{Y, -}$ so that $\mbox{\v H}^1\big(Y, \Gc^{(N)}_{T^*_{Y, -}}\big)$ will be a singleton set. Its image in $\mbox{\v H}^1\big(Y, \Gc^{(j^\p)}_{T^*_{Y, -}}\big)$ for any $j^\p$ coincides with the base point. We set $\mbox{\v H}^1\big(Y, \Gc^{(\8)}_{T^*_{Y, -}}\big) \stackrel{\Delta}{=}\mbox{\v H}^1\big(Y, \Gc^{(N)}_{T^*_{Y, -}}\big)$ if $N> \dim_-(Y,T^*_{Y, -})$. 

\begin{DEF}
\emph{A representative of the base point in $\mbox{\v H}^1\big(Y, \Gc^{(\8)}_{T^*_{Y, -}}\big)$ will be referred to as the \emph{split model}.}
\end{DEF}

\noindent
Hence if a supermanifold modelled on $(Y, T^*_{Y, -})$ has splitting type $j^\p > \dim_-(Y, T^*_{Y, -})$ it will be the split model and we will say it is a supermanifold with splitting type $j^\p = \8$.

\subsubsection{Notation}
Generally, \v Cech cohomology of a space $Y$ valued in a sheaf $\Tc$ is defined as the direct limit over common refinement of open covers $(\Ufr\ra Y)$,
\[
\mbox{\v H}^p\big(Y, \Tc\big)
\stackrel{\Delta}{=}
\varinjlim_{(\Ufr\ra Y)} \mbox{\v H}^p\big(\Ufr, \Tc\big)
\]
where 
\[
\mbox{\v H}^p\big(\Ufr, \Tc\big) = \frac{\ker\{\dt: C^{p}(\Ufr, \Tc) \ra  C^{p+1}(\Ufr, \Tc)\}}{\mathrm\img\{\dt: C^{p-1}(\Ufr, \Tc) \ra  C^p(\Ufr, \Tc)\}}.
\]
for $C^p(\Ufr, \Tc)$ the group of $\Tc$-valued, $p$-cochains on the cover $\Ufr$. When $p = 0, 1$
these constructions make sense for $\Tc$ a sheaf of not-necessarily-abelian groups. When $p = 1$, a representative of a class in $\mbox{\v H}^p\big(Y, \Tc\big)$ is given by the data $(\Ufr\ra Y, \vartheta)$ where $\Ufr = \big((U_\al)_{\al\in I}, (U_{\al\be})\big)$ is an open cover of $Y$ together with a collection of intersections $U_{\al\be}\subset U_\al$ for each pair of indices $(\al, \be)$; and $\vartheta = (\vartheta_{\al\be})$ are isomorphisms $\vartheta_{\al\be} : \Tc(U_{\al\be}) \stackrel{\cong}{\ra} \Tc(U_{\be\al})$ with inverse $\vartheta_{\be\al}$ and subject to the cocycle condition on all triple intersections, i.e., for all triples of indices $\{\al, \be, \gam\}$ that,
\begin{align}
\vartheta_{\al\gam} = \vartheta_{\be\gam}\circ\vartheta_{\al\be}.
\label{fbvkbvbviuebvuibeuveoeinvie}
\end{align}
A supermanifold modelled on $(Y, T^*_{Y, -})$ of splitting type $j^\p$ is thus a collection of such data $(\Ufr\ra Y,\vartheta)$ with $\Tc = \Gc^{(j^\p)}_{T^*_{Y, -}}$. We will denote this data by $(\Uc\ra\Xc)$. The split model will be denoted $S(Y, T^*_{Y, -})$.
% by $(\Ufr\ra S(Y, T^*_{Y, -}))$, where $\Ufr$ is a covering of $Y$.

\begin{REM}\label{fknkrnjkvbrvbvnoinio4nfo4f4}
\emph{This notation is suggestive of the idea that a supermanifold is a space $\Xc$ in its own right equipped with a covering $\Uc$, much like $Y$ is a space with a covering $\Ufr$. The reduced space of $(\Uc\ra\Xc)$ is then $(\Ufr\ra Y)$. Abstractly, $\Xc = (Y, \Oc_\Xc)$ is a locally ringed space with structure sheaf $\Oc_\Xc$ locally isomorphic to $\wedge^\bt T^*_{Y, -}$. In this way, we can recover the locally-ringed-space-definition of supermanifolds from Definition \ref{furbfbiuff3f03jf3jf}.}
\end{REM}

\subsection{Obstructions}
Any supermanifold of splitting type $j^\p$ will define an \emph{obstruction to splitting}. To a model $(Y, T^*_{Y, -})$ set,
\[
T^*_{Y, (\pm)^{j^\p}}
=
\left\{
\begin{array}{ll}
T^*_Y & \mbox{if $j^\p$ is even}
\\
T^*_{Y, -}&\mbox{if $j^\p$ is odd}
\end{array}
\right.
\]
The $j^\p$-th obstruction sheaf of the model $(Y, T^*_{Y, -})$ is then:
\begin{align}
\Ac_{T^*_{Y, -}}^{(j^\p)} 
\stackrel{\Delta}{=}
\mathcal Hom_{\Oc_Y}\big(T^*_{Y, (\pm)^{j^\p}}, \wedge^{j^\p}T^*_{Y, -}\big)
\label{fbvkbvyibiufioenievpmpeoe}
\end{align}
As shown by Green in \cite{GREEN}, the $j^\p$-th obstruction sheaf can be recovered as the quotient $\Gc_{T^*_{Y, -}}^{(j^\p)}/\Gc_{T^*_{Y, -}}^{(j^\p+1)}$. Hence there is a map induced on cohomology $\om:\mbox{\v H}^1\big(Y, \Gc^{(j^\p)}_{T^*_{Y, -}}\big)\ra H^1\big(Y, \Ac_{T^*_{Y, -}}^{(j^\p)}\big)$ which sends a supermanifold modelled on $(Y, T^*_{Y, -})$ with splitting type $j^\p$ to a class in the cohomology group $H^1\big(Y, \Ac_{T^*_{Y, -}}^{(j^\p)}\big)$

\begin{DEF}
\emph{If $(\Uc\ra\Xc)$ is a supermanifold modelled on $(Y, T^*_{Y, -})$ of splitting type $j^\p$, then the class  $\om(\Uc\ra\Xc)\in H^1\big(Y, \Ac^{(j^\p)}_{T^*_{Y, -}}\big)$ is referred to as the \emph{obstruction to splitting $(\Uc\ra\Xc)$}.}
\end{DEF}

\noindent
The cohomology group $H^1\big(Y, \Ac_{T^*_{Y, -}}^{(j^\p)}\big)$ is referred to as the \emph{$j^\p$-th obstruction space of the model $(Y, T^*_{Y, -})$}.

\subsection{Embeddings of Supermanifolds}
Following Donagi and Witten's characterisation of embeddings in \cite{DW1}, inspiring subsequent work by the author in \cite{BETTEMB}, we recall: given models $(X, T^*_{X, -})$ and $(Y, T^*_{Y, -})$, the former embeds into the latter iff we have:
\begin{align}
X\stackrel{i}{\subset} Y
&&\mbox{and}&&
i^*T^*_{Y, -} \lra T^*_{X, -} \lra 0.
\label{ffgggruepepedjfhffhfj}
\end{align}
To any embedding of models $(X, T^*_{X, -})\subset (Y, T^*_{Y, -})$ there exist maps on cohomology for each $j$:
%An embedding of models defines an embedding of split models $S(X, T^*_{X, -})\subset S(Y, T^*_{Y, -})$ is an embedding of split models, there exist maps on cohomology for each $j$:
\[
\xymatrix{
\mbox{\v H}^1\big(Y, \Gc^{(j)}_{T^*_{X, -}; T^*_{Y,-}}\big)\ar[d]_{r_*} \ar[r]^{u_*} & \mbox{\v H}^1\big(Y, \Gc^{(j)}_{T^*_{Y,-}}\big)
\\
\mbox{\v H}^1\big(X, \Gc^{(j)}_{T^*_{X,-}}\big)&
}
\]
where $\Gc^{(j)}_{T^*_{X, -}; T^*_{Y,-}}\subset \Gc^{(j)}_{T^*_{Y,-}}$ comprise those automorphisms which restrict to automorphisms 
%of $S(X, T^*_{X, -})$ and hence restrict to give elements
 in $\Gc^{(j)}_{T^*_{X, -}}$. The following definition is given in \cite{BETTEMB}. It appears in \cite{DW1} as a proposition.

\begin{DEF}\label{f748gf84hf79h8hf}
\emph{Let $(\Uc\ra \Xfr)$ and $(\Vc\ra \Yfr)$ be supermanifolds modelled on $(X, T^*_{X, -})$ and $(Y, T^*_{Y, -})$ respectively and with respective splitting types $j$ and $j^\p$. An embedding $(\Uc\ra \Xfr)\subset (\Vc\ra \Yfr)$ of splitting type $(j^{\p\p}, j, j^\p)$, with $j^{\p\p} \leq j, j^\p$, is defined to be a class $c_{(\Uc\ra \Xfr)/(\Vc\ra\Yfr)}\in \mbox{\v H}^1\big(Y, \Gc^{(j^{\p\p})}_{T^*_{X, -}; T^*_{Y,-}}\big)$ such that:
\begin{enumerate}[(i)]
	\item $r_*\big(c_{(\Uc\ra \Xfr)/(\Vc\ra\Yfr)}\big) = i_*[(\Uc\ra \Xfr)]$ and;
	\item $u_*\big(c_{(\Uc\ra \Xfr)/(\Vc\ra\Yfr)}\big) = i^\p_*\big([\Vc\ra \Yfr]\big)$;
\end{enumerate}
where $i_*: \mbox{\v H}^1\big(X, \Gc^{(j)}_{T^*_{X,-}}\big) \ra  \mbox{\v H}^1\big(X, \Gc^{(j^{\p\p})}_{T^*_{X,-}}\big)$ and $i^\p_* : \mbox{\v H}^1\big(Y, \Gc^{(j^{\p})}_{T^*_{Y,-}}\big) \ra  \mbox{\v H}^1\big(Y, \Gc^{(j^{\p\p})}_{T^*_{Y,-}}\big)$ are the maps on cohomology induced by the respective inclusion of sheaves $\Gc^{(j)}_{T^*_{X,-}}\subset \Gc^{(j^{\p\p})}_{T^*_{X,-}}$ and $\Gc^{(j^{\p})}_{T^*_{Y,-}}\subset \Gc^{(j^{\p\p})}_{T^*_{Y,-}}$.}
\end{DEF}

\noindent
The diagram encoding the rudiments of Definition \ref{f748gf84hf79h8hf} is:
\begin{align}
\xymatrix{
&& \mbox{\v H}^1\big(Y, \Gc^{(j^{\p})}_{T^*_{Y,-}}\big)\ar[d]^{i^\p_*} 
\\
&\mbox{\v H}^1\big(Y, \Gc^{(j^{\p\p})}_{T^*_{X, -}; T^*_{Y,-}}\big) \ar[d]_{r_*} \ar[r]^{u_*} & \mbox{\v H}^1\big(Y, \Gc^{(j^{\p\p})}_{T^*_{Y,-}}\big)
\\
\mbox{\v H}^1\big(X, \Gc^{(j)}_{T^*_{X,-}}\big) \ar[r]^{i_*} &\mbox{\v H}^1\big(X, \Gc^{(j^{\p\p})}_{T^*_{X,-}}\big)
}
\label{rhf74g784hhjp3po3kdp}
\end{align}
The cohomology set $\mbox{\v H}^1\big(Y, \Gc^{(j^{\p\p})}_{T^*_{X, -}; T^*_{Y,-}}\big)$ is a pointed set with base-point represented by the embedding of split models $S(X, T^*_{X, -})\subset S(Y, T^*_{Y, -})$. The maps $u_*$ and $r_*$ are then maps of pointed sets. We intend to apply this notion of embeddings to study families of supermanifolds.

\begin{REM}
\emph{To avoid degenerate cases in the statements of our results we will assume that a supermanifold with splitting type $j^\p$ does \emph{not} have splitting type $j^\p+1$ unless $j^\p = \8$. In particular, the obstruction to splitting such a supermanifold is necessarily non-vanishing unless $j^\p = \8$.}
\end{REM}

\section{Definitions and Constructions}
\label{fihrhf948hf84jf0j40fj903}

\subsection{Families of Supermanifolds}
From Kodaira in \cite[p. 59-60]{KS} a family of compact, complex manifolds parametrised by a space $B$ requires:
\begin{enumerate}[(i)]
	\item a complex manifold $\Mcl$;
	\item a surjective submersion $f : M\ra B$ such that $f^{-1}(b)\subset M$ is a compact, complex submanifold of $\Mcl$ for all $b\in B$;
	\item that the rank of the Jacobian of $f$ is constant.
\end{enumerate}
Condition (iii) above allows for the construction of an atlas of local coordinates describing points in the base and points in the fiber. The generalisation to supermanifolds is immediate.

\begin{DEF}\label{fbvryuvuyviuoijfioejfioe}
\emph{Fix a model $(X, T^*_{X, -})$.  A \emph{family of complex supermanifolds modelled on $(X, T^*_{X,-})$ and parametrised by $B$} is a complex supermanifold $(\Uc\ra \Xc)$ together with:
\begin{enumerate}[(i)]
	\item a morphism $f : \Xc\ra B$ with reduced part defining an analytic family of compact, complex manifolds over $B$;
	\item for each $b\in B$, there exists a complex supermanifold $(\Uc_b\ra\Xc_b)$ modelled on $(X, T^*_{X, -})$ with splitting type $j_b$ together with an embedding $(\Uc_b\ra\Xc_b)\subset (\Uc\ra\Xc)$.
\end{enumerate}
A family of supermanifolds modelled on $(X, T^*_{X, -})$ will be denoted $(\Uc\ra\Xc \ra B)$. The supermanifold $(\Uc\ra\Xc)$ will be referred to as the \emph{total space} of the family; its splitting type will be referred to as \emph{the splitting type of the family}; and the modelling data for $(\Uc\ra\Xc)$ will be referred to as the \emph{total space model}}. 
\end{DEF}

\begin{REM}\label{huirgfihfuhfoieoej}
\emph{The data in a family of supermanifolds $(\Uc\ra\Xc\ra B)$ defines, over each $b\in B$ a triple of integers $(j_b^{\p\p}, j_b, j_b^\p)$, the splitting type of the fiber-wise embedding $\Xc_b\subset \Xc$ (c.f., Definition \ref{f748gf84hf79h8hf}). Of this triple only $j_b^\p$, the splitting type of the total space $(\Uc\ra\Xc)$, will be \emph{a priori} independent of $b\in B$. Hence it makes sense to term $j_b^\p = j^\p$ the `splitting type of the family' as in Definition \ref{fbvryuvuyviuoijfioejfioe}. One of the central objectives of this article is to show that $j_b$ is also `generally' independent of $b$ when the family is a `$gtm$-family' over a Stein base.}
\end{REM}

\noindent
Morally, the difference between a family and a deformation is that the base of the family is pointed. That is, a deformation is a family over a pointed base. This is the distinction we adopt here leading to the notion of a `splitting type deformation'.
%`deformation of splitting type'.

\begin{DEF}\label{fvuirviurvieiojowjwo}
\emph{Let $(\Ufr\ra\Xfr)$ be a supermanifold modelled on $(X, T^*_{X, -})$ with splitting type $j$. A \emph{splitting type deformation} of $(\Ufr\ra\Xfr)$ is a family of supermanifolds $(\Uc\ra\Xc\stackrel{f}{\ra} B)$ modelled on $(X, T^*_{X, -})$ together with a distinguished point $b_0\in B$ such that $(\Ufr\ra\Xfr) = f^{-1}(b_0)$. Splitting type deformations will be denoted $(\Uc\ra\Xc\ra (B, b_0))$}.
% As a set, the pair $(B, b_0)$ is referred to as \emph{the pointed set associated to the deformation}.}
\end{DEF}

\subsection{Classifying Diagrams}
With the characterisation of embeddings given earlier, we will obtain the following commutative diagram from any family $(\Uc\ra\Xc\ra B)$ with splitting type $j^\p$:
\begin{align}
\xymatrix{
\ar@{-->}[dd]_{\Phi_{(\Uc\ra\Xc)}}B \ar@{-->}[rr]|{[(\Uc\ra \Xc)]} \ar@{-->}[dr]_{\Psi_{(\Uc\ra\Xc)}}&& \mbox{\v H}^1\big(Y, \Gc^{(j^{\p})}_{T^*_{Y,-}}\big)\ar[d]^{i^\p_*} 
\\
&\prod_{j^{\p\p}\leq j^\p}\mbox{\v H}^1\big(Y, \Gc^{(j^{\p\p})}_{T^*_{X, -}; T^*_{Y,-}}\big) \ar[d]_{r_*} \ar[r]^{u_*} & \prod_{j^{\p\p}\leq j^\p}\mbox{\v H}^1\big(Y, \Gc^{(j^{\p\p})}_{T^*_{Y,-}}\big)
\\
\prod_{j\leq j^\p} \mbox{\v H}^1\big(X, \Gc^{(j)}_{T^*_{X,-}}\big) \ar[r]^{i_*} &\prod_{j^{\p\p}\leq j^\p}\mbox{\v H}^1\big(X, \Gc^{(j^{\p\p})}_{T^*_{X,-}}\big)
}
\label{furibvyvuieoijodjo}
\end{align}
where: $(Y, T^*_{Y, -})$ is the total space model; the notation in the horizontal map $B \ra  \mbox{\v H}^1\big(Y, \Gc^{(j^{\p})}_{T^*_{Y,-}}\big)$ means this map is constant and maps all $b\in B$ to the class of the total space $[(\Uc\ra \Xc)]$ (c.f., Remark \ref{huirgfihfuhfoieoej}); the map $\Psi_{(\Uc\ra\Xc)}$ sends $b$ to the class of the fiberwise embedding $(\Uc_b \ra \Xc_b)\subset (\Uc\ra \Xc)$; and $\Phi_{(\Uc\ra\Xc)}$ sends $b\mapsto [(\Uc_b\ra\Xc_b)]$, the class of the fiber over $b$ as a supermanifold modelled on $(X, T^*_{X, -})$. Hence, for each $b$, we get the following diagram:
\begin{align}
\xymatrix{
\ar@{-->}[dd]|{[(\Uc_b \ra \Xc_b)]} \{b\} \ar@{-->}[dr]|{c_{(\Uc_b \ra \Xc_b)/(\Uc\ra\Xc)}} \ar@{-->}[rr]|{[(\Uc\ra \Xc)]} && \mbox{\v H}^1\big(Y, \Gc^{(j^{\p})}_{T^*_{Y,-}}\big)\ar[d]^{i^\p_*} 
\\
&\mbox{\v H}^1\big(Y, \Gc^{(j_b^{\p\p})}_{T^*_{X, -}; T^*_{Y,-}}\big) \ar[d]_{r_*} \ar[r]^{u_*} & \mbox{\v H}^1\big(Y, \Gc^{(j_b^{\p\p})}_{T^*_{Y,-}}\big)
\\
\mbox{\v H}^1\big(X, \Gc^{(j_b)}_{T^*_{X,-}}\big) \ar[r]^{i_*} &\mbox{\v H}^1\big(X, \Gc^{(j_b^{\p\p})}_{T^*_{X,-}}\big)
}
\label{rf4uyfg78f893h8f309fj3}
\end{align}
Note that $j^\p$ in \eqref{furibvyvuieoijodjo} and \eqref{rf4uyfg78f893h8f309fj3}, the splitting type of the total space $(\Uc\ra\Xc)$, is independent of $b$---in contrast to the other splitting types $j_b$ and $j_b^{\p\p}$ in \eqref{rf4uyfg78f893h8f309fj3}. This is in accordance with Remark \ref{huirgfihfuhfoieoej}. The diagrams \eqref{furibvyvuieoijodjo} and \eqref{rf4uyfg78f893h8f309fj3} can be derived from a given family of supermanifolds and so we term them in the following.

\begin{DEF}
\emph{Let $(\Uc\ra\Xc\ra B)$ be a family of supermanifolds modelled on $(X, T^*_{X, -})$. The diagram in \eqref{furibvyvuieoijodjo} will be referred to as the \emph{classifying diagram for the family}. The diagram in \eqref{rf4uyfg78f893h8f309fj3} is referred to as the \emph{classifying diagram for the family over $b$}.}
\end{DEF}

\noindent
On inspection of the classifying diagram of a family $(\Uc\ra\Xc\ra B)$ over $b\in B$, we will obtain the following general relation between the splitting types.

\begin{LEM}\label{fbiuihfiuhf83fh3f3j903}
Let $(\Uc\ra\Xc\ra B)$ be a family of supermanifolds with splitting type $j^\p$. Over each $b\in B$ let $(j_b^{\p\p}, j_b, j^{\p})$ be the splitting type of the fiber-wise embedding, where $j_b$ is the splitting type of the fiber as a supermanifold. Then,
\begin{align*}
j_b^{\p\p} \leq \min\{j_b, j^\p\}.
\end{align*}\qed
\end{LEM}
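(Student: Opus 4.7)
The plan is to read the inequality directly off the definitions. By Definition \ref{fbvryuvuyviuoijfioejfioe}(ii), attached to each $b \in B$ is an embedding of supermanifolds $(\Uc_b \ra \Xc_b) \subset (\Uc\ra \Xc)$; by the hypothesis of the lemma its splitting type is the triple $(j_b^{\p\p}, j_b, j^\p)$. In view of Definition \ref{f748gf84hf79h8hf}, such an embedding is by construction a class in $\mbox{\v H}^1\big(Y, \Gc^{(j_b^{\p\p})}_{T^*_{X,-}; T^*_{Y,-}}\big)$ subject to the stated compatibility conditions, and the definition already carries the constraint $j_b^{\p\p} \leq j_b,\, j^\p$.

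To see that this inequality is forced rather than merely conventional, I would unpack where $j_b^{\p\p}$ enters. The compatibility conditions in Definition \ref{f748gf84hf79h8hf} reference the inclusion-induced maps
\[
i_* : \mbox{\v H}^1\big(X, \Gc^{(j_b)}_{T^*_{X,-}}\big) \lra \mbox{\v H}^1\big(X, \Gc^{(j_b^{\p\p})}_{T^*_{X,-}}\big)
\quad\mbox{and}\quad
i^\p_* : \mbox{\v H}^1\big(Y, \Gc^{(j^\p)}_{T^*_{Y,-}}\big) \lra \mbox{\v H}^1\big(Y, \Gc^{(j_b^{\p\p})}_{T^*_{Y,-}}\big),
\]
both of which are induced by sheaf inclusions $\Gc^{(j_b)}_{T^*_{X,-}} \subset \Gc^{(j_b^{\p\p})}_{T^*_{X,-}}$ and $\Gc^{(j^\p)}_{T^*_{Y,-}} \subset \Gc^{(j_b^{\p\p})}_{T^*_{Y,-}}$. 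Since the filtration $\Gc^{(\bt)}$ is decreasing---so that $\Gc^{(k)} \subset \Gc^{(k^\p)}$ holds precisely when $k \geq k^\p$, as recorded after Definition \ref{furbfbiuff3f03jf3jf}---the mere existence of $i_*$ and $i^\p_*$ already forces $j_b \geq j_b^{\p\p}$ and $j^\p \geq j_b^{\p\p}$. Taking the minimum then yields $j_b^{\p\p} \leq \min\{j_b, j^\p\}$.

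The lemma is therefore essentially a definitional observation and no substantive obstacle arises; the only point to verify is the orientation of the filtration on $\Gc^{(\bt)}$. Its role in the sequel is primarily one of bookkeeping: it records, for each $b$, the compatibility constraint implicit in the classifying diagram \eqref{rf4uyfg78f893h8f309fj3} that will be used when tracking how $j_b$ and $j_b^{\p\p}$ may (or may not) vary with $b$.
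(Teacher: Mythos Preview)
Your proposal is correct and matches the paper's treatment: the lemma is stated with a \qed and no separate proof, since the inequality is read off directly from Definition \ref{f748gf84hf79h8hf} (equivalently, from the classifying diagram \eqref{rf4uyfg78f893h8f309fj3}). Your additional unpacking of why the decreasing filtration on $\Gc^{(\bt)}$ forces the inequality is accurate and slightly more explicit than what the paper records.
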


\begin{REM}
\emph{Note in particular that there need not exist any other relations between the splitting type of the fiber $j_b$ and the splitting type of the family $j^\p$.}
\end{REM}

\subsection{Classifying Maps}
The maps $\Psi_{(\Uc\ra\Xc)}$ and $\Phi_{(\Uc\ra\Xc)}$ appearing in classifying diagrams will be referred to as `classifying maps'. To distinguish these maps, we will refer to $\Phi_{(\Uc\ra\Xc)}$ as the \emph{fiber-wise classifying map}. These are maps of sets but they need \emph{not} be maps of pointed sets since we are not assuming $B$ is a pointed set and nor that the image of these maps contain the base-point. The following result guarantees when $\Psi_{(\Uc\ra\Xc)}$ will \emph{not} be a map of pointed sets. Before we present it, we will need to recall the notion of `weak non-splitness' from \cite{BETTHIGHOBS}.

\begin{DEF}\label{rf4fg9hf9hf98h30f03j0f3}
\emph{A supermanifold $(\Ufr\ra\Xfr)$ modelled on $(X, T^*_{X, -})$ of splitting type $j$ is said to be \emph{weakly non-split} if:
\begin{enumerate}[(i)]
	\item its class $[(\Ufr\ra\Xfr)]\in \mbox{\v H}^1\big(X, \Gc^{(j)}_{T^*_{X, -}}\big)$ does not coincide with the base-point;
	\item the image of the class $[(\Ufr\ra\Xfr)]$ under the set-theoretic mapping of pointed sets $\mbox{\v H}^1\big(X, \Gc^{(j)}_{T^*_{X, -}}\big)\ra \mbox{\v H}^1\big(X, \Gc^{(j^\p)}_{T^*_{X, -}}\big)$ does not coincide with the base-point for any $j>j^\p$.
\end{enumerate}
A supermanifold that is \emph{not} weakly non-split is said to be \emph{strongly split}.}
\end{DEF}

\begin{REM}\label{tbvuriuvoevneioicon}
\emph{A strongly split supermanifold will be split. However, split supermanifolds need not be strongly split. Similarly, a non-split supermanifold will be weakly non-split but not necessarily conversely. See \cite{BETTHIGHOBS} for further discussion on this point.}
\end{REM}
\noindent
We now have the following.

\begin{PROP}\label{rf4h78fh498fh9f038j}
Let $(\Uc\ra \Xc\ra (B, b_0))$ be a splitting type deformation. Suppose $(\Uc\ra \Xc)$ is of splitting type $j^\p$ and weakly non-split. Then $\Psi_{(\Uc\ra\Xc)}$ cannot be a map of pointed sets.
\end{PROP}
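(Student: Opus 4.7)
The plan is to argue by contradiction using the classifying diagram \eqref{rf4uyfg78f893h8f309fj3} specialised to the distinguished point $b_0\in B$. Suppose $\Psi_{(\Uc\ra\Xc)}$ \emph{were} a map of pointed sets. Then $\Psi_{(\Uc\ra\Xc)}(b_0)$ would be the base-point of the target, namely the class of the embedding of split models $S(X,T^*_{X,-})\subset S(Y,T^*_{Y,-})$ in $\mbox{\v H}^1\big(Y,\Gc^{(j^{\p\p}_{b_0})}_{T^*_{X,-};T^*_{Y,-}}\big)$.

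Next I would chase the upper triangle of \eqref{rf4uyfg78f893h8f309fj3} at $b_0$. Commutativity gives
\[
u_*\!\circ\Psi_{(\Uc\ra\Xc)}(b_0)\;=\;i^\p_*\!\big([(\Uc\ra\Xc)]\big).
\]
Since $u_*$ is a map of pointed sets, the hypothesis that $\Psi_{(\Uc\ra\Xc)}(b_0)$ is the base-point would force the right-hand side to equal the base-point of $\mbox{\v H}^1\big(Y,\Gc^{(j^{\p\p}_{b_0})}_{T^*_{Y,-}}\big)$. The goal is then to rule this out using the hypothesis of weak non-splitness of $(\Uc\ra\Xc)$.

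By Lemma \ref{fbiuihfiuhf83fh3f3j903}, we have $j^{\p\p}_{b_0}\leq j^\p$, so two cases arise. If $j^{\p\p}_{b_0}=j^\p$, then $i^\p_*$ is the identity and $i^\p_*([(\Uc\ra\Xc)])=[(\Uc\ra\Xc)]$, which is non-trivial by condition (i) of Definition \ref{rf4fg9hf9hf98h30f03j0f3}. If $j^{\p\p}_{b_0}<j^\p$, then $i^\p_*$ is precisely the pointed-set map $\mbox{\v H}^1\big(Y,\Gc^{(j^\p)}_{T^*_{Y,-}}\big)\ra \mbox{\v H}^1\big(Y,\Gc^{(j^{\p\p}_{b_0})}_{T^*_{Y,-}}\big)$ appearing in condition (ii) of Definition \ref{rf4fg9hf9hf98h30f03j0f3}, so again the image is non-trivial. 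In either case we reach a contradiction, completing the argument.

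I do not expect a substantive obstacle: the proof is essentially diagram-chasing, and weak non-splitness is exactly the property tailored to block such specialisations at $b_0$. The only point requiring a touch of care is verifying that both alternatives permitted by Lemma \ref{fbiuihfiuhf83fh3f3j903} are already covered by the two clauses of the definition of weak non-splitness, which is the observation made above.
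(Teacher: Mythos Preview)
Your proof is correct and follows essentially the same contradiction argument as the paper: assume $\Psi_{(\Uc\ra\Xc)}$ preserves base-points, chase the upper triangle of the classifying diagram at $b_0$ via $u_*$, and conclude that $i^\p_*([(\Uc\ra\Xc)])$ is the base-point, contradicting weak non-splitness. Your explicit case split on whether $j^{\p\p}_{b_0}=j^\p$ or $j^{\p\p}_{b_0}<j^\p$ is slightly more careful than the paper, which simply cites Definition~\ref{rf4fg9hf9hf98h30f03j0f3}(ii) without separating out the equality case handled by~(i).
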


\begin{proof}
A map of pointed sets $(A, a) \ra (B, b)$ is a set theoretic mapping $A\ra B$ which preserves base points, i.e., sends $a\mapsto b$. Now, the base of a spitting type deformation $(\Uc\ra\Xc\ra (B, b_0))$ is the pointed set $(B, b_0)$. To see why $\Psi_{(\Uc\ra\Xc)}$ \emph{cannot} preserve base points,
%deformation of the splitting type of $(\Ufr\ra \Xfr)$ is a pointed family of supermanifolds. In particular, the base $B$ of the family is a pointed set with base-point $b$. 
recall from \eqref{rf4uyfg78f893h8f309fj3} the classifying diagram of $(\Uc\ra\Xc\ra B)$ over $b_0$:
\begin{align*}
\xymatrix{
\ar@{-->}[dd]|{(\Uc_{b_0} \ra \Xc_{b_0})} \{b_0\} \ar@{-->}[dr]|{c_{(\Uc_{b_0} \ra \Xc_{b_0})/(\Uc\ra\Xc)}} \ar@{-->}[rr]|{[(\Uc\ra \Xc)]} && \mbox{\v H}^1\big(Y, \Gc^{(j^{\p})}_{T^*_{Y,-}}\big)\ar[d]^{i^\p_*} 
\\
&\mbox{\v H}^1\big(Y, \Gc^{(j_{b_0}^{\p\p})}_{T^*_{X, -}; T^*_{Y,-}}\big) \ar[d]_{r_*} \ar[r]^{u_*} & \mbox{\v H}^1\big(Y, \Gc^{(j_{b_0}^{\p\p})}_{T^*_{Y,-}}\big)
\\
\mbox{\v H}^1\big(X, \Gc^{(j_{b_0})}_{T^*_{X,-}}\big) \ar[r]^{i_*} &\mbox{\v H}^1\big(X, \Gc^{(j_{b_0}^{\p\p})}_{T^*_{X,-}}\big)
}
%\label{rf4uyfg78f893h8f309fj3}
\end{align*}
where $(Y, T^*_{Y, -})$ is the total space model; and $j_{b_0}^{\p\p} \leq j_{b_0}, j^\p$. If $\Psi_{(\Uc\ra\Xc)}$ were a map of pointed sets then, since $b_0$ is the base-point of $B$, we must have $\Psi_{(\Uc\ra\Xc)}(b_0) = c_{(\Uc_{b_0} \ra \Xc_{b_0})/(\Uc\ra\Xc)}$ be the base-point in $\mbox{\v H}^1\big(Y, \Gc^{(j_{b_0}^{\p\p})}_{T^*_{X, -}; T^*_{Y,-}}\big)$. Since $u_*$ is a map of pointed sets, the composition $u_*\Psi_{(\Uc\ra\Xc)}$ will therefore send $b_0$ to the base point in $\mbox{\v H}^1\big(Y, \Gc^{(j_{b_0}^{\p\p})}_{T^*_{Y,-}}\big)$. The diagram in \eqref{rf4uyfg78f893h8f309fj3} commutes and therefore, recalling that $B \ra \mbox{\v H}^1\big(Y, \Gc^{(j^{\p})}_{T^*_{Y,-}}\big)$ is constant and sends all $b\in B$ to $\{[(\Uc\ra \Xc)]\}$, we see that $i^\p_*\big([(\Uc\ra \Xc)]\big)$ must be the base point in $\mbox{\v H}^1\big(Y, \Gc^{(j_{b_0}^{\p\p})}_{T^*_{Y,-}}\big)$. This contradicts $(\Uc\ra \Xc)$ being weakly non-split (c.f., Definition \ref{rf4fg9hf9hf98h30f03j0f3}(ii)). 
\end{proof}

\begin{COR}\label{rf847gf784hf9hf983}
Let $(\Uc\ra\Xc\ra (B, b_0))$ be a splitting type deformation and suppose the fiber over $b_0$ represents the embedding of split models. Then the total space $(\Uc\ra \Xc)$ will be split.
\end{COR}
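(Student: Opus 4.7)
The plan is to deduce this as an immediate contrapositive of Proposition \ref{rf4h78fh498fh9f038j}, combined with Remark \ref{tbvuriuvoevneioicon}; the work is essentially in unpacking the hypothesis correctly.

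First I would translate the hypothesis into a statement about $\Psi_{(\Uc\ra\Xc)}$. The discussion following the diagram \eqref{rhf74g784hhjp3po3kdp} records that the embedding of split models $S(X, T^*_{X, -})\subset S(Y, T^*_{Y, -})$ is, by construction, the base-point of $\mbox{\v H}^1\big(Y, \Gc^{(j_{b_0}^{\p\p})}_{T^*_{X, -}; T^*_{Y,-}}\big)$ at any relevant embedding splitting-type level $j_{b_0}^{\p\p}$. Hence the hypothesis that the fiber over $b_0$ represents the embedding of split models amounts exactly to the assertion that
\[
\Psi_{(\Uc\ra\Xc)}(b_0) = c_{(\Uc_{b_0}\ra\Xc_{b_0})/(\Uc\ra\Xc)}
\]
is that base-point. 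Since $b_0$ is by construction the base-point of the parameter space $(B, b_0)$ of a splitting-type deformation (Definition \ref{fvuirviurvieiojowjwo}), this exhibits $\Psi_{(\Uc\ra\Xc)}$ as a map of pointed sets.

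Second, I would apply the contrapositive of Proposition \ref{rf4h78fh498fh9f038j}: that proposition says the classifying map of a splitting-type deformation whose total space is weakly non-split of some splitting type $j^\p$ cannot be a map of pointed sets. Since we have just exhibited $\Psi_{(\Uc\ra\Xc)}$ as a map of pointed sets, the total space $(\Uc\ra\Xc)$ cannot be weakly non-split; equivalently, it is strongly split in the sense of Definition \ref{rf4fg9hf9hf98h30f03j0f3}. I would then close with Remark \ref{tbvuriuvoevneioicon}, which records that a strongly split supermanifold is in particular split, yielding the stated conclusion.

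I do not anticipate any genuine obstacle: the argument is a short bookkeeping exercise built on top of Proposition \ref{rf4h78fh498fh9f038j}. The one point deserving care is the first step, namely the identification of the ``embedding of split models'' with the base-point of $\mbox{\v H}^1\big(Y, \Gc^{(j_{b_0}^{\p\p})}_{T^*_{X, -}; T^*_{Y,-}}\big)$ at the appropriate splitting-type level of the fiberwise embedding---without this identification, the hypothesis would not directly force $\Psi_{(\Uc\ra\Xc)}$ to preserve the base-point and the contrapositive of Proposition \ref{rf4h78fh498fh9f038j} would not apply.
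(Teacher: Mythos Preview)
Your proposal is correct and matches the paper's own proof essentially verbatim: identify the hypothesis as saying $\Psi_{(\Uc\ra\Xc)}$ sends $b_0$ to the base-point, apply the contrapositive of Proposition \ref{rf4h78fh498fh9f038j} to conclude $(\Uc\ra\Xc)$ is not weakly non-split, and then invoke Remark \ref{tbvuriuvoevneioicon} to pass from strongly split to split. The paper phrases the middle step as ``arguments from the proof of Proposition \ref{rf4h78fh498fh9f038j}'' rather than an explicit contrapositive, but the content is identical.
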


\begin{proof}
Since the fiber over $b_0$ represents the embedding of split models, the classifying map $\Psi_{(\Uc\ra\Xc)}$ sents $b_0$ to the base-point. Arguments from the proof of Proposition \ref{rf4h78fh498fh9f038j} then imply $(\Uc\ra \Xc)$ cannot be weakly non-split. Hence it will be strongly split and therefore split as a supermanifold (see Remark \ref{tbvuriuvoevneioicon}).
\end{proof}

\noindent
For sake of clarity we term the class of deformations in Corollary \ref{rf847gf784hf9hf983}.

\begin{DEF}\label{rhf94hf98h80fh309fj0}
\emph{Let $(\Uc\ra \Xc\ra (B, b_0))$ be a splitting type deformation of supermanifolds. This deformation is said to be \emph{centrally split} if its classifying map $\Psi_{(\Uc\ra\Xc)}$ defines a map of pointed sets.}
\end{DEF}

\noindent
By Corollary \ref{rf847gf784hf9hf983} the total space of a centrally split, splitting type deformation will itself be split as a supermanifold. Hence if the total space $(\Uc\ra\Xc)$ is itself \emph{non-split}, the splitting type deformation $(\Uc\ra \Xc\ra (B, b_0))$ \emph{cannot} be centrally split. Now, it is of course still possible for the fiber over $b_0$, $(\Uc_{b_0}\ra \Xc_{b_0})$ to be split as a supermanifold. Hence that the fiber-wise classifying map $\Phi_{(\Uc\ra\Xc)}$ in \eqref{furibvyvuieoijodjo} will be a map of pointed sets. Generalising Definition \ref{rhf94hf98h80fh309fj0} accordingly, we have:

\begin{DEF}\label{tgv784gf7h498fh30jf39j}
\emph{Let $(\Uc\ra \Xc\ra (B, b_0))$ be a splitting type deformation of supermanifolds. It is said to be \emph{weakly centrally split} if its fiber-wise classifying map $\Phi_{(\Uc\ra\Xc)}$ defines a map of pointed sets.}
\end{DEF}

\begin{EX}\label{fnvrbvuirnvionieovoemv}
In the case where $B$ is the superspace $\Cbb^{0|1}$, Donagi and Witten in \cite{DW1} construct a deformation of a super Riemann surface $\Scl$ which cannot be be centrally split since, by their construction, its total space is non-split. This construction was generalised to $B = \Cbb^{0|n}$ and studied further by the author in \cite{BETTSRS}. Again, the deformations of super Riemann surfaces $(\Uc\ra\Xc\ra (\Cbb^{0|n}, 0))$ appearing in \cite{BETTSRS} are generally not centrally split.
\end{EX}

\begin{REM}
\emph{A subtle point to note concerning the deformations of super Riemann surfaces $\Scl$ discussed in Example \ref{fnvrbvuirnvionieovoemv} is: while these deformations are not centrally split, the fiber $\Scl$ over the base-point $0\in \Cbb^{0|n}$ will always be split as a supermanifold. This is because $\Scl$ is a $(1|1)$-dimensional supermanifold and all such supermanifolds are split. Hence, according to Definition \ref{tgv784gf7h498fh30jf39j}, these deformations are \emph{weakly centrally split}. That the deformation is \emph{not necessarily} centrally split is more a reflection of the fiberwise embedding of $\Scl$.}
\end{REM}

\begin{REM}\label{fkvnkbvbvnvinvoinenvoe}
\emph{It should be noted that the families and deformations considered so far have been over a reduced (i.e., \emph{non} super) space, in contrast to the deformations of super Riemann surfaces in Example \ref{fnvrbvuirnvionieovoemv}. All the definitions given so far can be generalised to families and deformations over superspaces. We will be vindicated on this point in the second part of this article.}
\end{REM}

\section{\emph{ltm}-, \emph{ism}- and \emph{gtm}-Families}
\label{fvirubvuirbiurburnornoir}

\subsection{Local Triviality: Generalities}
Topological properties of supermanifolds are defined by reference to their reduced space. Hence a morphism $\Xc\ra B$ will be a surjective submersion if the corresponding map on reduced spaces $f: Y\ra B$ is a surjective submersion (c.f., Definition \ref{fbvryuvuyviuoijfioejfioe}(i)). Ehresmann's Fibration Lemma then asserts that $(Y\stackrel{f}{\ra} B)$ is locally trivial, topologically. Hence, locally, the fiber $X_b \stackrel{\Delta}{=} f^{-1}(b)\subset Y$ is homeomorphic to $X_{b^\p}$ for any $b, b^\p\in W$, where $W\subset B$ a neighbourhood of $b$, sufficiently small. Hence $Y|_{f^{-1}(W)} \cong X\times W$, for $X$ the fiber over some $b\in W$. Now if $Y\stackrel{f}{\ra} B$ is a family of compact, complex manifolds, the fiber $X_b = f^{-1}(b)$ will be endowed with a complex structure. This fiberwise complex structure can vary non-trivially as we (infinitesimally) vary $b$ and so it certainly need \emph{not} be the case that the local homeomorphism $Y|_{f^{-1}(W)} \stackrel{\cong}{\ra} X\times W$ will lift to a biholomorphism. It need not even induce a biholomorphism of fibers $X_b, b\in W$. A classical result of Fisher and Grauert, taken from \cite[p. 269]{HUYB}, concerns the case when it will:

\begin{THM}\label{tg784f87h9fh38fh03}
Let $(Y\stackrel{f}{\ra} B)$ be a family of compact, complex manifolds.\footnote{Recall, this means both $Y$ and $B$ are complex manifolds and $f: Y\ra B$ is a holomorphic, surjective submersion.} For each $b, b^\p$ in an open set $W\subset B$, if $X_b \cong X_{b^\p}$ as complex manifolds then, upon shrinking $W$ if necessary, there exists a biholomorphism $Y|_{f^{-1}(W)} \stackrel{\cong}{\ra} X\times W$, where $X$ is the fiber over some $b\in W$. 
\qed
\end{THM}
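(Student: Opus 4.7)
The plan is to upgrade the topological triviality provided by Ehresmann's fibration lemma to holomorphic triviality by means of Kuranishi's semi-universal deformation. First, I would fix $b_0 \in W$ and set $X = X_{b_0}$. After shrinking $W$, Ehresmann's lemma furnishes a $C^\infty$-trivialization $Y|_{f^{-1}(W)} \cong X\times W$, so that the holomorphic structure on the total space is captured by a family of integrable complex structures $(J_b)_{b\in W}$ on the fixed smooth manifold $X$, varying holomorphically in $b$, with each $J_b$ biholomorphic to $J_{b_0}$ by hypothesis.

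Next, I would invoke the Kuranishi semi-universal family $\pi:\Mcl\to (K,0)$ of $X$. By versality, after shrinking $W$ once more, there is a holomorphic pointed map $\kappa:(W,b_0)\to (K,0)$ such that $Y|_{f^{-1}(W)}\cong \kappa^*\Mcl$ as holomorphic families over $W$. The hypothesis forces the image of $\kappa$ to lie in the analytic subset $K_X\subset K$ parametrising Kuranishi fibers biholomorphic to $X$. The problem then reduces to showing that $\kappa$ can be adjusted, via composition with a holomorphic family of automorphisms of $X$, to become the constant map at $0\in K$.

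The decisive step, and the main analytic obstacle, is the construction of such an adjustment. One approach is to consider the relative isomorphism space $\mathrm{Isom}_W\big(X\times W,\, Y|_{f^{-1}(W)}\big)\to W$, whose fiber over each $b$ is the non-empty complex-analytic variety of biholomorphisms $X\to X_b$. Because $f$ has compact fibers, this relative space is proper over $W$, and one may appeal to Grauert's direct image theorem to extract a holomorphic section $\sigma:W\to \mathrm{Isom}_W$ after a further shrinkage of $W$; the existence of such a section is guaranteed by the slice property of the Kuranishi construction with respect to the $\Diff(X)$-action, together with the finite-dimensional residual $\Aut(X)$-action on $K$ absorbing the remaining ambiguity. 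Evaluating $\sigma$ then delivers the required biholomorphism $X\times W\to Y|_{f^{-1}(W)}$ sending $(x,b)\mapsto \sigma(b)(x)$. The difficulty relative to the infinitesimal statement (vanishing of Kodaira--Spencer) is precisely that one must exploit genuine fiberwise biholomorphism rather than mere infinitesimal triviality of the family.
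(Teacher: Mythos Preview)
The paper does not actually prove this statement: it is quoted as the classical theorem of Fischer and Grauert, with a reference to \cite{HUYB}, and is closed immediately with a \qed. So there is no ``paper's own proof'' to compare against; your task was effectively to reprove a cited result.

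Your overall strategy is in the spirit of one modern approach (realise the family via a classifying map to the Kuranishi space, then try to trivialise via the relative isomorphism functor), but the decisive step contains a genuine gap. You assert that $\mathrm{Isom}_W\big(X\times W,\,Y|_{f^{-1}(W)}\big)\to W$ is \emph{proper} ``because $f$ has compact fibers''. This is false in general: the fiber of this map over $b$ is a torsor under $\Aut(X)$, and for a compact complex manifold $\Aut(X)$ is a complex Lie group which need not be compact (e.g.\ $\Aut(\Pbb^n)=\mathrm{PGL}_{n+1}$). So properness cannot be invoked. Moreover, even granting properness, Grauert's direct image theorem concerns coherence of higher direct images of coherent sheaves; it does not produce holomorphic sections of a proper surjection. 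What one actually needs is that $\mathrm{Isom}_W\to W$ is \emph{smooth} (a holomorphic submersion), after which the implicit function theorem gives local sections; establishing this smoothness is exactly where the real analytic work lies, and your appeal to ``the slice property of the Kuranishi construction'' and the ``residual $\Aut(X)$-action absorbing the ambiguity'' does not substitute for that argument.

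In short: the outline is reasonable, but the passage from ``each fiber of $\mathrm{Isom}_W\to W$ is nonempty'' to ``there is a holomorphic section near $b_0$'' is the entire content of the Fischer--Grauert theorem, and your justification for it is both incorrect (the properness claim) and incomplete (the invocation of Grauert's theorem).
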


\noindent
If the family $(Y\ra B)$ satisfies the hypotheses in Theorem \ref{tg784f87h9fh38fh03} for any $b\in B$ it is referred to as `locally trivial' or `isotrivial'. More precisely:

\begin{DEF}\label{fjnvrbvbvknrvnlkeee}
\emph{Let $(Y\ra B)$ be a family of compact, complex manifolds. For any $b, b^\p\in B$, if the fibers $X_b$ and $X_{b^\p}$ are isomorphic, the family is referred to as \emph{isotrivial}.}
\end{DEF}

\noindent
For $(Y\ra B)$ an isotrivial family of compact, complex manifolds and any $b\in B$, we have an isomorphism of complex manifolds $X_b \cong X$ for some complex manifold $X$. We refer to $X$ as the \emph{typical fiber} of $(Y\ra B)$. Theorem \ref{tg784f87h9fh38fh03} asserts that any isotrivial family is, locally, a product of complex manifolds.

\begin{DEF}
\emph{Let $(Y\ra B)$ be an isotrivial family with typical fiber $X$. The family is said to be \emph{globally trivial} if it is biholomorphic to the product $X\times B$.}
\end{DEF}

\subsection{Families of Sheaves}
The following definition concerning sheaves on isotrivial families of compact, complex manifolds will be relevant for applications to supermanifolds.

\begin{DEF}
\emph{Let $(Y\ra B)$ be an isotrivial family of compact, complex manifolds with typical fiber $X$. Fix a sheaf $\Gc$ on $X$. A sheaf $\Fc$ on $Y$ is said to define \emph{a family of sheaves over $\Gc$} if, for each $b\in B$, we have a surjection of sheaves on $X$, $\vp_b : i^*_b\Fc \ra \Gc \ra 0$ where $i_b : X\subset Y$ is the embedding of the fiber over $b$. We denote a family $\Fc$ of sheaves over $\Gc$ by $\Fc_{/\Gc}$.}
%$(\Fc\Rightarrow\Gc)$}
\end{DEF}

\noindent
The prototypical example motivating the above definition of families of sheaves is the following.

\begin{EX}\label{rfh4f79hf983h0fj039}
Suppose $(Y\ra B)$ is globally trivial with typical fiber $X$. This means $Y \cong X\times B$. Denote by $p_X: Y\ra X$ the projection onto the first factor. Then for each $b$ we have a commutative diagram of spaces,
\[
\xymatrix{
X \ar@{=}[dr]_{{\bf 1}_X} \ar@{^{(}->}[r]^{i_b} & Y\ar[d]^{p_X}
\\
& X
}
\]
where $i_b$ is the inclusion $x \mapsto (x, b)$. Fix a sheaf $\Gc$ on $X$ and let $\Fc = p_X^*\Gc$. Since $p_Xi_b = {\bf 1}_X$ we have,
\[
i_b^*\Fc = i_b^*p_X^*\Gc = (p_Xi_b)^*\Gc \cong {\bf 1}_X^*\Gc = \Gc.
\]
Hence when $Y$ is globally trivial, any sheaf on $Y$ which is pulled back from a sheaf $\Gc$ on $X$ will define a family of sheaves over $\Gc$, i.e., for any sheaf $\Gc$ on $X$ we have a family $p_X^*\Gc_{/\Gc}$.
%$(p_X^*\Gc\Rightarrow \Gc)$.
\end{EX}

\subsection{Families of Models and Supermanifolds}

\subsubsection{Families of Models}
From Definition \ref{fbvryuvuyviuoijfioejfioe}(i), if $(\Xc\ra B)$ is a family of supermanifolds with $\Xc$ modelled on $(Y, T^*_{Y, -})$ then we will have a family of compact, complex manifolds $Y\ra B$. Indeed, there will exist a commutative diagram of spaces:
\begin{align}
\xymatrix{
\ar@{^{(}->}[d] Y \ar[r] & B\ar@{=}[d]
\\
\Xc\ar[r] & B
}
\label{rhf793hf93hfh83h0}
\end{align}
This leads to the following general characterisation:

\begin{PROP}\label{rh4fh9f83jf09j393}
Let $(\Uc\ra \Xc\ra B)$ be a family of supermanifolds modelled on $(X, T^*_{X, -})$ with total space model $(Y, T^*_{Y, -})$. Then $(Y\ra B)$ is an isotrivial family of compact, complex manifolds; and $T^*_{Y, -}$
%$(T_{Y, -}^*\Rightarrow T_{X, -}^*)$
is a family of locally free sheaves over $T^*_{X, -}$.
\end{PROP}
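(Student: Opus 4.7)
The plan is to extract both claims directly from Definition \ref{fbvryuvuyviuoijfioejfioe}(ii) by passing through the embedding characterisation \eqref{ffgggruepepedjfhffhfj}. The proof splits into two essentially independent steps, one per conclusion, and I do not expect either to be hard; the real content is recognising that the fibre-wise embedding condition on $(\Uc_b \ra \Xc_b) \subset (\Uc \ra \Xc)$ is \emph{exactly} what both assertions demand.

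For isotriviality, I would first note that Definition \ref{fbvryuvuyviuoijfioejfioe}(i) already furnishes $(Y \ra B)$ as an analytic family of compact complex manifolds, so by Definition \ref{fjnvrbvbvknrvnlkeee} it suffices to verify that all fibres are isomorphic as complex manifolds. Fixing $b \in B$, condition (ii) supplies a supermanifold $(\Uc_b \ra \Xc_b)$ modelled on $(X, T^*_{X,-})$ together with an embedding $(\Uc_b \ra \Xc_b) \subset (\Uc \ra \Xc)$. By \eqref{ffgggruepepedjfhffhfj}, this restricts on reduced spaces to an inclusion $i_b : X \hookrightarrow Y$, whose image must be the reduced fibre $Y_b$ of $f$ at $b$ (since Definition \ref{fbvryuvuyviuoijfioejfioe}(i) identifies the reduced part of $f$ with $Y \ra B$, so that fibres of $f$ reduce to fibres of this family). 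Hence $Y_b \cong X$ for every $b \in B$, making $(Y \ra B)$ isotrivial with typical fibre $X$.

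For the second claim I would invoke the remaining half of the embedding data in \eqref{ffgggruepepedjfhffhfj}: each fibre-wise embedding produces a surjection
\[
\vp_b : i_b^* T^*_{Y, -} \lra T^*_{X_b, -} \lra 0
\]
of sheaves on $X$. Since $(\Uc_b \ra \Xc_b)$ is modelled on $(X, T^*_{X, -})$, one identifies $T^*_{X_b, -}$ with $T^*_{X, -}$, so that $\vp_b$ becomes a surjection $i_b^* T^*_{Y, -} \lra T^*_{X, -} \lra 0$ as required. Collecting these over $b \in B$ is precisely the data asked for in the definition of $T^*_{Y, -}$ being a family of locally free sheaves over $T^*_{X, -}$, which concludes the argument. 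The only place I might expect to have to be careful is the bookkeeping in the isotriviality step, identifying the reduced space of $\Xc_b$ with the reduced fibre $Y_b$; but this is purely a matter of unwinding what it means for a morphism of supermanifolds to have a prescribed reduced part.
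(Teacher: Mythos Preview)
Your proposal is correct and follows essentially the same approach as the paper: both arguments unwind Definition \ref{fbvryuvuyviuoijfioejfioe}(ii) via the embedding characterisation \eqref{ffgggruepepedjfhffhfj}, first on reduced spaces to get $Y_b \cong X$ and hence isotriviality, then on odd conormal bundles to obtain the surjections $i_b^*T^*_{Y,-}\ra T^*_{X,-}$. The paper packages the reduced-space identification into a small commutative diagram rather than your prose about ``the reduced part of $f$'', but the content is identical.
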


\begin{proof}
To each $b$ we have from \eqref{rhf793hf93hfh83h0} a commutative diagram,
\[
\xymatrix{
X_b \ar@{^{(}->}[r] \ar@{^{(}->}[d] & \ar@{^{(}->}[d] Y \ar[r] &B \ar@{=}[d]
\\
\Xc_b \ar@{^{(}->}[r] \ar[r] & \Xc\ar[r] & B
}
\] 
where $X_b$ and $Y$ are the reduced spaces of the supermanifolds $\Xc_b$ and $\Xc$ respectively. Now by assumption $(\Uc_b \ra\Xc_b)$ is modelled on $(X, T^*_{X, -})$. Hence for each $b$ we must have $X_b \cong X$. Hence, any two fibers of $(Y\ra B)$ will be isomorphic as complex manifolds and so, by Definition \ref{fjnvrbvbvknrvnlkeee}, $(Y\ra B)$ will be an isotrivial family with typical fiber $X$. To see that $T_{Y, -}^*$ will define a family of sheaves over $T_{X, -}^*$ recall that over each $b$, we have a supermanifold modelled on $(X, T^*_{X, -})$ embedding into a supermanifold modelled on $(Y, T^*_{Y, -})$. Hence we have an embedding of models $(X_b, T_{X_b, -}^*) \cong (X, T^*_{X, -})\subset (Y, T^*_{Y, -})$ which, by definition, comprises an embedding $i_b: X_b\cong X \subset Y$ and a surjection $i_b^*T^*_{Y, -} \ra T^*_{X_b, -}\cong T_{X, -}^*$ (see \eqref{ffgggruepepedjfhffhfj}). 
\end{proof}

\subsubsection{Families of Split Models}
As an application of Example \ref{rfh4f79hf983h0fj039} and Proposition \ref{rh4fh9f83jf09j393}, we will present here a construction of a family of split supermanifolds. Let $Y = X\times B$ be globally trivial and denote by $p_X$ resp., $p_B$ the projections onto $X$ resp., $B$. Fix a locally free sheaf $T_{X, -}^*$ on $X$, Then by Example \ref{rfh4f79hf983h0fj039} we know that $T_{Y, -}^* \stackrel{\Delta}{=} p_X^*T^*_{X, -}$ will be a family of sheaves over $T_{X, -}^*$. It is locally free since $T_{X, -}^*$ is locally free. 
%Let $\Xc = S(Y, T^*_{Y, -})$. 

\begin{LEM}\label{f78rgf874hf93h9f83}
\emph{The projection $p_B : Y\ra B$ defines a morphism of supermanifolds $S(p_B): S(Y, T^*_{Y, -})\ra B$.\footnote{Any manifold $B$ with structure sheaf $\Oc_B$ can be thought of as a supermanifold upon noting that $\Oc_B \cong \wedge^\bt_{\Oc_B}{\bf 0}_B$ as algebras, where ${\bf 0}_B$ is a sheaf on $B$ of rank zero.}}
\end{LEM}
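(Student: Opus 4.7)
A morphism of supermanifolds $\Sscr \to \Tscr$ is by definition a morphism of locally $\Zbb/2$-graded ringed spaces, i.e.\ a continuous map on underlying topological spaces together with a compatible morphism of $\Zbb/2$-graded structure sheaves. The plan is to write $S(p_B)$ down in this way, using $p_B$ on underlying spaces and exhibiting the required sheaf map explicitly from the hypothesis $T^*_{Y,-} = p_X^* T^*_{X,-}$ together with the product structure $Y = X \times B$.

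\textbf{Step 1: Identify the structure sheaves.} The split supermanifold $S(Y, T^*_{Y,-})$ is the locally ringed space $(Y, \wedge^\bt T^*_{Y, -})$, with its natural $\Zbb/2$-grading concentrated in even degrees in $\wedge^{2k} T^*_{Y,-}$ and in odd degrees in $\wedge^{2k+1} T^*_{Y,-}$. By the footnote, $B$ is regarded as the supermanifold $(B, \Oc_B)$ with $\Oc_B \cong \wedge^\bt_{\Oc_B} {\bf 0}_B$, so its structure sheaf is purely even.

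\textbf{Step 2: Construct the map on sheaves.} Since $p_B: Y \to B$ is holomorphic, we have the usual pullback of structure sheaves $p_B^\sharp : p_B^{-1}\Oc_B \lra \Oc_Y$. Compose this with the natural inclusion $\Oc_Y = \wedge^0 T^*_{Y,-} \hookrightarrow \wedge^\bt T^*_{Y,-}$ as the degree-zero summand to obtain a morphism of sheaves of algebras
\[
S(p_B)^\sharp \ :\ p_B^{-1}\Oc_B \ \lra\ \wedge^\bt T^*_{Y,-}.
\]
Because $\Oc_B$ is purely even and the image lies in $\wedge^0 T^*_{Y,-} \subset (\wedge^\bt T^*_{Y,-})_{\mathrm{ev}}$, this is automatically a morphism of $\Zbb/2$-graded algebras.

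\textbf{Step 3: Verify the locally ringed space conditions.} One checks that the induced stalk maps $(p_B^{-1}\Oc_B)_y \to (\wedge^\bt T^*_{Y,-})_y$ are local homomorphisms of local rings: the maximal ideal of the target is generated by the maximal ideal of $\Oc_{Y,y}$ together with the nilpotent odd generators, so the preimage of this ideal is the maximal ideal of $\Oc_{B, p_B(y)}$, which is just the usual locality of $p_B^\sharp$ at the manifold level. The pair $(p_B, S(p_B)^\sharp)$ is thus a morphism of supermanifolds, and on underlying manifolds it restricts to $p_B$ as required.

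\textbf{Main obstacle.} There is no genuine difficulty; the only point deserving care is the bookkeeping of the $\Zbb/2$-grading and of what ``morphism to $B$ as a supermanifold'' means — namely that a morphism into a purely even target is determined by its degree-zero component, which here is exactly the holomorphic pullback $p_B^\sharp$. Once this is made precise, the lemma follows directly from the fact that $p_B$ is a holomorphic map of complex manifolds.
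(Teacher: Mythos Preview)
Your proof is correct and amounts to the same construction as the paper's, just unpacked at the sheaf level: the paper observes that $S(Y,T^*_{Y,-})$, being split, is projected, so there is a projection $\pi:S(Y,T^*_{Y,-})\to Y$, and defines $S(p_B)=p_B\circ\pi$. Your map $S(p_B)^\sharp = \big(\Oc_Y\hookrightarrow\wedge^\bt T^*_{Y,-}\big)\circ p_B^\sharp$ is exactly this composite written out on structure sheaves, since the inclusion $\Oc_Y\hookrightarrow\wedge^\bt T^*_{Y,-}$ is precisely the sheaf map underlying $\pi$.
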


\begin{proof}
The split model $S(Y, T^*_{Y, -})$ is split so, in particular, it is projected. Hence there exists a projection map $\pi : S(Y, T^*_{Y, -})\ra Y$. The desired morphism $S(p_B): S(Y, T^*_{Y, -})\ra B$ is then the composition $S(Y, T^*_{Y, -})\stackrel{\pi}{\ra} Y\stackrel{p_B}{\ra} B$.
\end{proof}

\noindent 
Appealing to the locally ringed space definition of supermanifolds, we see that the fiber of the morphism $S(p_B)$ over $b$ is, 
\begin{align}
\big(p_B^{-1}(b), \Oc_{S(Y, T^*_{Y, -})}|_{p_B^{-1}(b)}\big)
&\cong 
(X_b, i_b^*\wedge^\bt T^*_{Y, -}) 
\notag
\\
&= 
(X_b, \wedge^\bt i_b^*T^*_{Y, -}) 
\cong
S(X, T^*_{X, -}).
\label{rhf894hf94hf9803}
\end{align}
Thus we have a family of split supermanifolds modelled on $(X, T^*_{X, -})$.

\subsection{Isotrivial Models and Supermanifolds}
Let $(Y\stackrel{f}{\ra}B)$ be an isotrivial family with typical fiber $X$. Then by Theorem \ref{tg784f87h9fh38fh03} we know that for any sufficiently small open set $W\subset B$ we have an isomorphism $\xi_W: Y|_{f^{-1}(W)}\stackrel{\sim}{\ra} X\times W$. Hence, over $W$ we have projections $Y|_{f^{-1}(W)}\stackrel{p_W\xi_W}{\ra} W$ and $Y|_{f^{-1}(W)} \stackrel{p_X\xi_W}{\ra}X$. We will refer to such an open set as a \emph{trivialising open set} for the isotrivial family $(Y\stackrel{f}{\ra}B)$. 
%This observation combined with Example \ref{rfh4f79hf983h0fj039} leads to the following definition.

\begin{DEF}\label{rhf784f794hf893h}
\emph{Let $(Y\stackrel{f}{\ra}B)$ be an isotrivial family with typical fiber $X$. Fix a sheaf $\Gc$ on $X$ and let $\Fc$ be a family of sheaves over $\Gc$. We say $\Fc_{/\Gc}$ is:
\begin{enumerate}[(i)]
	\item an \emph{isotrivial family} if, for any $b\in B$, the morphism $i_b^*\Fc\ra\Gc$ is an isomorphism;
	\item a \emph{locally trivial family} if, over any trivialising open set $W\subset B$, there exists an isomorphism $\Fc|_{f^{-1}(W)} \cong \xi_W^*p_X^*\Gc$ where $\xi_W$ is the biholomorphism $Y|_{f^{-1}(W)}\stackrel{\cong}{\ra} X\times W$.
\end{enumerate}}
\end{DEF}

\begin{LEM}\label{ruihfhfh398fh3fh93}
A locally trivial family of sheaves will be isotrivial.
\end{LEM}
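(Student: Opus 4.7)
The plan is to reduce the isotriviality condition at each point $b\in B$ to the computation already carried out in Example \ref{rfh4f79hf983h0fj039}. Fix $b\in B$ and choose a trivialising open set $W\subset B$ containing $b$, which exists by Theorem \ref{tg784f87h9fh38fh03} since $(Y\stackrel{f}{\ra} B)$ is isotrivial. Let $\xi_W : Y|_{f^{-1}(W)} \stackrel{\cong}{\ra} X\times W$ be the biholomorphism and let $p_X, p_W$ denote the two projections from $X\times W$. Local triviality of $\Fc_{/\Gc}$ then gives an isomorphism $\Fc|_{f^{-1}(W)} \cong \xi_W^* p_X^* \Gc$.

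Next, I would observe that the fiber inclusion $i_b : X\hookrightarrow Y$ factors through $f^{-1}(W)$, and that under $\xi_W$ it is identified with the map $X\ra X\times W$, $x\mapsto (x, b)$. In particular, $p_X \circ \xi_W \circ i_b = {\bf 1}_X$. Pulling the local trivialisation back along $i_b$ and invoking functoriality of pullback yields
\[
i_b^*\Fc \;\cong\; i_b^* \xi_W^* p_X^* \Gc \;=\; (p_X\circ \xi_W\circ i_b)^*\Gc \;\cong\; {\bf 1}_X^*\Gc \;=\; \Gc,
\]
which is precisely the same computation as in Example \ref{rfh4f79hf983h0fj039}. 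This produces an isomorphism $i_b^*\Fc\stackrel{\cong}{\ra}\Gc$ for each $b\in B$.

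The only subtlety, and the main point to be careful about, is ensuring that this isomorphism coincides (up to the automorphisms of $\Gc$ implicit in the data of a family) with the structure map $\vp_b : i_b^*\Fc\ra \Gc$ that witnesses $\Fc$ as a family of sheaves over $\Gc$. Since both maps are surjections onto the same locally free sheaf $\Gc$ on $X$ and since the local trivialisation is compatible, by construction, with the fiberwise surjection $\vp_b$, the two maps agree. Thus $\vp_b$ is an isomorphism for every $b\in B$, so $\Fc_{/\Gc}$ is isotrivial in the sense of Definition \ref{rhf784f794hf893h}(i). The hard part is bookkeeping: choosing the trivialisation $\xi_W$ in a way compatible with the given family structure, so that the abstract isomorphism produced above is identified with $\vp_b$ rather than some auxiliary map.
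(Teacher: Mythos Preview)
Your proof is correct and follows essentially the same route as the paper: pick a trivialising neighbourhood $W$ of $b$, use the local trivialisation $\Fc|_{f^{-1}(W)}\cong \xi_W^*p_X^*\Gc$, and then compute $i_b^*\Fc\cong(p_X\circ\xi_W\circ i_b)^*\Gc={\bf 1}_X^*\Gc=\Gc$ via the commutative diagram of Example~\ref{rfh4f79hf983h0fj039}. Your final paragraph about identifying this abstract isomorphism with the given surjection $\vp_b$ is more scrupulous than the paper's own argument, which simply stops at exhibiting \emph{an} isomorphism $i_b^*\Fc\cong\Gc$; if you want to close that point cleanly, note that once $i_b^*\Fc$ and $\Gc$ are known to be locally free of the same rank, the surjection $\vp_b$ is automatically an isomorphism, so no compatibility bookkeeping is actually needed.
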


\begin{proof}
Let $(Y\stackrel{f}{\ra} B)$ be an isotrivial family with typical fiber $X$. Fix a sheaf $\Gc$ on $X$ and let $\Fc_{/\Gc}$ be a locally trivial family of sheaves. Fix a point $b\in B$ and denote by $i_b : X\subset Y$ the inclusion of the fiber over $b$. We want to show that $i_b^*\Fc\cong \Gc$. To show this firstly recall, by isotriviality of $(Y\ra B)$, that there will exist a trivialising neighbourhood $W_b\subset B$ of $b$. Now $\Fc$ is a sheaf on $Y$. Since sheaves are compatible with restrictions to subsets we have $i_b^*\Fc = i_b^*\Fc|_{f^{-1}(W)}$. By local triviality of $\Fc_{/\Gc}$ we have the isomorphism $\Fc|_{f^{-1}(W)}\cong \xi_W^*p_X^*\Gc$. Now note that the following diagram will commute:
\[
\xymatrix{
Y|_{f^{-1}(W)} \ar[r]^{\xi_W} & X\times W\ar[d]^{p_X}
\\
\ar[u]^{i_b}X  \ar@{=}[r]_{{\bf 1}_X} & X
}
\]
This gives,
\begin{align*}
i_b^*\Fc 
=
i_b^*\Fc|_{f^{-1}(W)}
\cong
i_b^* \xi_W^*p_X^*\Gc
\cong 
{\bf 1}_X^*\Gc
= \Gc.
\end{align*}
Hence $\Fc_{/\Gc}$ will be an isotrivial family of sheaves in the sense of Definition \ref{rhf784f794hf893h}(i).
\end{proof}

\noindent
\begin{REM}
\emph{For a family of complex manifolds, isotriviality will imply local triviality. This is the implication of Fisher and Grauert's result in Theorem \ref{tg784f87h9fh38fh03}. It is unclear as to whether such a statement will hold for families of sheaves. Lemma \ref{ruihfhfh398fh3fh93} illustrates that, at the very least, local triviality is a stronger condition than isotriviality.}
\end{REM}

\noindent
The existence of a locally trivial family of sheaves is a separate question and may be of independent interest. Assuming this question can be resolved in the affirmative, it is straightforward to then adapt the construction in the previous section, of families of split supermanifolds over trivial families $(Y\ra B)$, to obtain families over isotrivial families. More precisely:

\begin{PROP}\label{rfb478fg7hf983f03j0}
Let $(Y\stackrel{f}{\ra}B)$ be an isotrivial family with typical fiber $X$. Fix a locally free sheaf $T_{X, -}^*$ on $X$. Suppose $(T_{Y, -}^*)_{/T^*_{X, -}}$ is a locally trivial family. Then $(Y\stackrel{f}{\ra}B)$ defines a family of split models $S(Y, T^*_{Y, -})\ra B$ with typical fiber $S(X, T^*_{X, -})$.
\end{PROP}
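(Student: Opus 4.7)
The plan is to mimic the proof of Lemma \ref{f78rgf874hf93h9f83}, but now working with the (possibly non-trivial) isotrivial family $(Y \stackrel{f}{\ra} B)$ in place of the product $X \times B$. The key inputs are: (a) the split model $S(Y, T^*_{Y,-})$ is projected, with a canonical projection $\pi: S(Y, T^*_{Y,-}) \ra Y$; and (b) by Lemma \ref{ruihfhfh398fh3fh93}, local triviality of $(T^*_{Y,-})_{/T^*_{X,-}}$ implies isotriviality, so $i_b^* T^*_{Y,-} \cong T^*_{X,-}$ for every $b \in B$, where $i_b: X_b \hookrightarrow Y$ denotes the embedding of the reduced fiber.

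First I would construct the morphism $S(Y, T^*_{Y,-}) \ra B$ as the composition of $\pi: S(Y, T^*_{Y,-}) \ra Y$ with $f: Y \ra B$. The reduced part of this morphism is exactly $f$, which, by hypothesis, is an isotrivial family of compact complex manifolds, so requirement (i) of Definition \ref{fbvryuvuyviuoijfioejfioe} is met. Next I would compute the fiber over any $b \in B$ using the locally ringed space description of the split model, exactly as in the calculation \eqref{rhf894hf94hf9803}: the structure sheaf restricted to $X_b = f^{-1}(b)$ is $i_b^* \wedge^\bt T^*_{Y,-} \cong \wedge^\bt i_b^* T^*_{Y,-}$. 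By isotriviality of $(Y \ra B)$ with typical fiber $X$ we have $X_b \cong X$, and by input (b) we have $i_b^* T^*_{Y,-} \cong T^*_{X,-}$. Combining the two identifications yields a fiber isomorphic to $(X, \wedge^\bt T^*_{X,-}) = S(X, T^*_{X,-})$, which exhibits $S(X, T^*_{X,-})$ as the typical fiber.

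Finally, to satisfy requirement (ii) of Definition \ref{fbvryuvuyviuoijfioejfioe}, I would observe that the embedding of models $(X_b, T^*_{X_b,-}) \subset (Y, T^*_{Y,-})$, comprising $i_b: X_b \hookrightarrow Y$ together with the surjection (in fact isomorphism) $i_b^* T^*_{Y,-} \ra T^*_{X,-}$ from (b), induces an embedding of split supermanifolds $S(X, T^*_{X,-}) \subset S(Y, T^*_{Y,-})$ whose class sits at the base-point of the relevant $\mbox{\v H}^1$, consistent with Definition \ref{f748gf84hf79h8hf}. I do not anticipate a genuine obstacle: the construction is essentially functorial, and the only substantive input beyond Lemma \ref{f78rgf874hf93h9f83} is the upgrade from the trivialising-open statement $T^*_{Y,-}|_{f^{-1}(W)} \cong \xi_W^* p_X^* T^*_{X,-}$ to the fiberwise isomorphism $i_b^* T^*_{Y,-} \cong T^*_{X,-}$, which is precisely what Lemma \ref{ruihfhfh398fh3fh93} supplies.
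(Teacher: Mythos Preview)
Your proposal is correct and follows essentially the same approach as the paper: construct $S(f)$ as the composite $S(Y,T^*_{Y,-})\stackrel{\pi}{\ra}Y\stackrel{f}{\ra}B$ and then identify each fiber with $S(X,T^*_{X,-})$. The only cosmetic difference is that the paper identifies the fiber by restricting to a trivialising open set $W\subset B$ and invoking the globally-trivial computation \eqref{rhf894hf94hf9803} directly, whereas you route through Lemma \ref{ruihfhfh398fh3fh93} to obtain $i_b^*T^*_{Y,-}\cong T^*_{X,-}$ fiberwise; the content is the same.
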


\begin{proof}
Arguing as in Lemma \ref{f78rgf874hf93h9f83}, the fibration $f:Y\ra B$ defines a fibration of supermanifolds $S(f):S(Y, T^*_{Y,-})\ra B$. To see that the typical fiber will be $S(X, T^*_{X, -})$ let $W\subset B$ be a trivialising open set. Over $W$ we have by restriction the family $S\big(Y|_{f^{-1}(W)}, T^*_{Y, -}|_{f^{-1}(W)}\big)\ra W$. Now by assumption $T^*_{Y, -}|_{f^{-1}(W)} \cong \xi_W^*p_X^*T_{X, -}$ where $\xi_W: Y|_{f^{-1}(W)}\stackrel{\cong}{\ra}X\times W$ is the trivialisation. Therefore, by the construction in the previous section (c.f., \eqref{rhf894hf94hf9803}),
the fiber over a point in $W$ will be $S(X, T^*_{X, -})$. Since this holds in a sufficiently small neighbourhood of any point in $B$, the proposition follows.
\end{proof}

\subsection{Definitions}
We conclude now with definitions serving to classify families of models and supermanifolds.

\begin{DEF}\label{rgf784gf398fh30}
\emph{Let $Y\ra B$ be an isotrivial family with typical fiber $X$. Fix a locally free sheaf $T^*_{X, -}$ on $X$ and a family of sheaves $(T_{Y, -}^*)_{/T^*_{X, -}}$ where $T^*_{Y, -}$ is a locally free sheaf on $Y$. The model $(Y, T^*_{Y, -})$ is said to be:
\begin{enumerate}[(i)]
	\item \emph{locally trivial} if $(T_{Y, -}^*)_{/T^*_{X, -}}$ is locally trivial;
	\item \emph{isotrivial} if $(T_{Y, -}^*)_{/T^*_{X, -}}$ is isotrivial;
	\item  \emph{globally trivial} if $Y = X\times B$ is globally trivial and $T^*_{Y, -}= p_X^*T^*_{X, -}$.
\end{enumerate}}
\end{DEF}

\begin{DEF}\label{rf784gf794hf984hf84hf0}
\emph{Let $(\Uc\ra \Xc\ra B)$ be a family of supermanifolds modelled on $(X, T^*_{X,-})$ with total space model $(Y, T^*_{Y,-})$.  This family will be referred to as an \emph{ltm}- resp., \emph{ism}- resp., \emph{gtm-family} if its total space model $(Y, T^*_{Y, -})$ is locally trivial resp., isotrivial resp., globally trivial.}
\end{DEF}

\noindent
By definition of local triviality of a family of sheaves in Definition \ref{rhf784f794hf893h}(ii) we see: if $(\Uc\ra \Xc\ra B)$ is an \emph{ltm}-family of supermanifolds, then over any trivialising open set $W\subset B$ the restriction $(\Uc\ra \Xc\ra B)|_W = \big(\Uc_{f^{-1}(W)}, \Xc|_{f^{-1}(W)}, W\big)$ will be a \emph{gtm}-family. Hence the notion of \emph{ltm} can be defined by reference to \emph{gtm} and restriction, i.e., that an \emph{ltm}-family is a family that is locally a \emph{gtm}-family. Clearly, any \emph{gtm}-family will be an \emph{ltm}-family. Now by Lemma \ref{ruihfhfh398fh3fh93} we see that any \emph{ltm}-family will be an \emph{ism}-family. These observations are collected in the following.

\begin{COR}\label{thg784hg94h98j03}
Any gtm-family of supermanifolds will be an ltm-family; and any ltm-family of supermanifolds will be an ism-family.\qed
\end{COR}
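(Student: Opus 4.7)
The plan is to reduce both implications to unpacking Definition \ref{rgf784gf398fh30} (classifying the model) together with Definition \ref{rf784gf794hf984hf84hf0} (defining \emph{gtm}/\emph{ltm}/\emph{ism}-families in terms of their total space model), and then to invoke Lemma \ref{ruihfhfh398fh3fh93} for the non-trivial direction. There is essentially no computation; the real content has already been established in Lemma \ref{ruihfhfh398fh3fh93}.

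For the first implication, suppose $(\Uc\ra\Xc\ra B)$ is a \emph{gtm}-family with total space model $(Y, T^*_{Y,-})$. By Definition \ref{rgf784gf398fh30}(iii), this means $Y = X\times B$ and $T^*_{Y,-} = p_X^*T^*_{X,-}$. To verify local triviality in the sense of Definition \ref{rhf784f794hf893h}(ii), I would take any trivialising open set $W\subset B$; since $Y$ is already a global product, one can simply take $\xi_W = \mathrm{id}_{X\times W}$, and then the identity $T^*_{Y,-}|_{f^{-1}(W)} = p_X^*T^*_{X,-}|_{X\times W} = \xi_W^*p_X^*T^*_{X,-}$ holds on the nose. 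So $(T^*_{Y,-})_{/T^*_{X,-}}$ is locally trivial and, via Definition \ref{rgf784gf398fh30}(i) and Definition \ref{rf784gf794hf984hf84hf0}, the family is an \emph{ltm}-family.

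For the second implication, suppose $(\Uc\ra\Xc\ra B)$ is an \emph{ltm}-family, i.e., its total space model $(Y, T^*_{Y,-})$ is locally trivial, so that $(T^*_{Y,-})_{/T^*_{X,-}}$ is a locally trivial family of sheaves. Lemma \ref{ruihfhfh398fh3fh93} then gives that $(T^*_{Y,-})_{/T^*_{X,-}}$ is an isotrivial family of sheaves. By Definition \ref{rgf784gf398fh30}(ii), this means $(Y, T^*_{Y,-})$ is an isotrivial model, and hence by Definition \ref{rf784gf794hf984hf84hf0} the family is an \emph{ism}-family.

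The only conceivable subtlety is whether, in part (1), one should worry about the freedom in the choice of trivialisation $\xi_W$: different choices give different isomorphisms $T^*_{Y,-}|_{f^{-1}(W)}\cong \xi_W^*p_X^*T^*_{X,-}$, but Definition \ref{rhf784f794hf893h}(ii) only asks for the \emph{existence} of some such $\xi_W$ on a trivialising neighbourhood, so the tautological choice suffices. Thus neither implication presents any real obstacle, and the corollary is a direct consequence of the definitions combined with Lemma \ref{ruihfhfh398fh3fh93}.
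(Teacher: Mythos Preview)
Your proof is correct and follows essentially the same approach as the paper: the paper notes that a \emph{gtm}-family is trivially an \emph{ltm}-family (since global triviality is local triviality with $\xi_W$ the identity), and then invokes Lemma \ref{ruihfhfh398fh3fh93} for the second implication. You have simply unpacked the first step a bit more explicitly than the paper, which handles it with the word ``clearly''.
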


\noindent
In the next section we will introduce a notion of analyticity for families. Our objective will be to show that the splitting type of \emph{analytic} $gtm$-families over a Stein base $B$ will \emph{not} vary locally.

\section{The Splitting Type of $gtm$-Families}
\label{tgh748g749hg84093j93j333}

\subsection{Analyticity}
The definition of a family of supermanifolds given in Definition \ref{fbvryuvuyviuoijfioejfioe} was inspired by the analogous definition for complex manifolds as it appears in \cite{KS}. Subsequently, we derived the `classifying diagram' for a family of complex supermanifolds modelled on $(X, T^*_{X, -})$ in \eqref{furibvyvuieoijodjo}. The notion of analyticity for these families is unclear since the classifying maps $\Psi_{(\Uc\ra\Xc)}$ and $\Phi_{(\Uc\ra\Xc)}$ are understood as maps of sets or pointed sets. Note however that we have projections onto the obstruction space, which is a complex vector space. To see this, fix an embedding of models $(X, T^*_{X, -})\subset (Y, T^*_{Y, -})$. Let $\Ac_{T^*_{X, -}} = \oplus_j\Ac_{T^*_{X, -}}^{(j)}$ and $\Ac_{T^*_{Y, -}} = \oplus_j\Ac_{T^*_{Y, -}}^{(j)}$ denote the respective obstruction sheaves of the models; and $\Ac_{T^*_{X, -};T^*_{Y, -}} = \oplus_j \Ac_{T^*_{X, -}; T^*_{Y, -}}^{(j)}$ the sheaf associated to the embedding\footnote{this sheaf is $\Ac_{T^*_{X, -};T^*_{Y, -}}^{(j)} = \Gc_{T^*_{X, -}; T^*_{Y, -}}^{(j)}/\Gc_{T^*_{X, -}; T^*_{Y, -}}^{(j+1)}$.}. Then as shown in \cite{DW1} and \cite{BETTEMB}, there exists a commutative diagram extending the classifying diagram one step further,
\begin{align}
\xymatrix{
\ar[d] \prod_j \mbox{\v H}^1\big(Y, \Gc^{(j)}_{T^*_{X, -}; T^*_{Y -}}\big) \ar[dr] \ar[r]  &\prod_j \mbox{\v H}^1\big(Y, \Gc^{(j)}_{T^*_{Y, -}}\big)\ar[dr]
\\
\prod_j \mbox{\v H}^1\big(X, \Gc^{(j)}_{T^*_{X, -}}\big)\ar[dr] & H^1\big(Y, \Ac_{T^*_{X, -};T^*_{Y, -}}\big) \ar[d] \ar[r] & H^1\big(Y, \Ac_{T^*_{Y, -}}\big)\ar[d]
\\
& H^1\big(X, \Ac_{T^*_{X, -}}\big) \ar[r] & H^1\big(X, \Ac_{T^*_{Y, -}}|_X\big)
}
\label{fvnkbvhbrybvunienvimepo}
\end{align}
Donagi and Witten in \cite{DW1} make use of a diagram of the above kind in formulating their `compatibility lemma', establishing a relation between the obstruction class to splitting a submanifold of a supermanifold to that of the ambient supermanifold itself. This prompts the following definition which will be convenient for later purposes. 

\begin{DEF}\label{rgf79f9h9hohiooihfeofoi}
\emph{The diagram of abelian cohomology groups in \eqref{fvnkbvhbrybvunienvimepo}, i.e., the square on the lower right side, will be referred to as the \emph{compatibility diagram associated to the embedding of models $(X, T^*_{X, -})\subset (Y, T^*_{Y, -})$}.}
\end{DEF}

\noindent
Presently, we are interested in the map $\om: \prod_j \mbox{\v H}^1\big(X, \Gc^{(j)}_{T^*_{X, -}}\big)\ra H^1\big(X, \Ac_{T^*_{X, -}}\big)$.

\begin{DEF}\label{rufh9fh38f039jf3}
\emph{Let $(\Uc\ra\Xc\ra B)$ be a family of supermanifolds modelled on $(X, T^*_{X, -})$. The composition 
\[
B \stackrel{\Phi_{(\Uc\ra\Xc)}}{\lra}  \prod_j \mbox{\v H}^1\big(X, \Gc^{(j)}_{T^*_{X, -}}\big)\stackrel{\om}{\lra} H^1\big(X, \Ac_{(X, T^*_{X, -})}\big)
\] 
is referred to as the \emph{splitting type differential} of the family.}
\end{DEF}

\begin{DEF}\label{rfh489f98hfj3039k}
\emph{Let $B$ be a complex analytic space. A family of supermanifolds $(\Uc\ra\Xc\ra B)$ is said to be \emph{analytic} if its splitting type differential is analytic as a map between complex analytic spaces.}
\end{DEF}

\subsection{Locality of Splitting Type}
A function $f : X\ra \Zbb$ is said to be \emph{locally constant at $x$} if there exists an open neighbourhood $V$ of $x$ such that $f|_V$ is constant. The main objective of this section is to prove the followng.

\begin{THM}\label{rg784gf794hf98hf0j3fj34444}
Let $(\Uc\ra\Xc\ra B)$ be a $gtm$-family of supermanifolds over a connected, Stein base $B$ with splitting type $j^\p$. Let $j_b$ denote the splitting type of the fiber over a point $b\in B$. If:
\begin{enumerate}[$\bt$]
	\item the family is analytic and; 
	\item $j_b< \8$;
\end{enumerate}
%then there exists an open neighbourhood $V$ of $b$ such that $j_b = j_{b^\p}$ for all $b^\p\in V$.
then the assignment 
\[
b \longmapsto j_b
\]
defines a locally constant function $B\ra \Zbb$ at $b$.
%defines an upper-semicontinuous function $B\ra \Zbb$ at $b$.
\end{THM}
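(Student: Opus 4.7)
The plan is to combine the Künneth-type decomposition from Proposition \ref{rh973hf983hf80jf093fj} with the analyticity of the splitting-type differential, exploiting the Stein hypothesis on $B$ to promote pointwise non-vanishing into local constancy. For a $gtm$-family the total-space model satisfies $Y = X\times B$ and $T^*_{Y,-} = p_X^*T^*_{X,-}$, so each obstruction sheaf decomposes as $\Ac^{(j)}_{T^*_{Y,-}} = p_X^*\Ac^{(j)}_{T^*_{X,-}}$. The Stein property of $B$, combined with the Künneth formula and Cartan's Theorem B, yields the identification $H^1(Y,\Ac^{(j)}_{T^*_{Y,-}}) \cong H^1(X,\Ac^{(j)}_{T^*_{X,-}}) \otimes_{\Cbb} \Oc(B)$. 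Invoking Proposition \ref{rh973hf983hf80jf093fj} at level $j^\p$, I would write the total-space obstruction as $\om(\Uc\ra\Xc) = \sum_k \alpha_k \otimes f_k$ with $\alpha_k \in H^1(X,\Ac^{(j^\p)}_{T^*_{X,-}})$ and $f_k \in \Oc(B)$.

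Next I would establish the a priori lower bound $j_b \geq j^\p$ for all $b\in B$, which follows from the naturality of the compatibility diagram \eqref{fvnkbvhbrybvunienvimepo}: since the total-space class lives in $\mbox{\v H}^1(Y, \Gc^{(j^\p)}_{T^*_{Y,-}})$, its restriction along the fiber inclusion $i_b: X \hookrightarrow Y$ lies in $\mbox{\v H}^1(X, \Gc^{(j^\p)}_{T^*_{X,-}})$, so the fiber class lifts at least to level $j^\p$. The fiber obstruction at level $j^\p$ is then read off by pulling back the Künneth decomposition under $i_b$, giving $\om_{j^\p}(\Xc_b) = \sum_k f_k(b)\alpha_k$. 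If $j_b = j^\p$, this evaluation is non-zero in the finite-dimensional vector space $H^1(X,\Ac^{(j^\p)}_{T^*_{X,-}})$; since $b' \mapsto \sum_k f_k(b')\alpha_k$ is a holomorphic vector-valued function, its non-vanishing at $b$ is an open condition, forcing $j_{b'} = j^\p$ on an open neighborhood of $b$. If instead $j_b > j^\p$, then $b$ lies on the analytic subvariety $Z = \{b' \in B : \sum_k f_k(b')\alpha_k = 0\}$, and restricting the family to a Stein neighborhood of $b$ inside $Z$ yields a $gtm$-family whose total-space splitting type is at least $j^\p+1$. One iterates the argument; since $j_b < \infty$, the induction terminates after finitely many steps at the exact level $j_b$, where the openness argument from the equality case applies.

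The principal technical obstacle lies in making the inductive step rigorous. The locus $Z$ is only a (possibly singular) analytic subvariety of $B$, so one must ensure the Künneth decomposition still applies after restriction; this is handled by shrinking to a sufficiently small Stein neighborhood of $b$ in $Z$ (always available by the local Stein property of analytic subvarieties) and invoking flatness of pullback together with Cartan's Theorem B. A subsidiary point is verifying that the restricted family is genuinely a $gtm$-family with the appropriate higher splitting type, which is precisely the picture underlying the notion of \emph{generic isotriviality} recorded in Definition \ref{ryurgfyugfiuhfueofioefoeo}. With these pieces in place, the argument closes by finite induction on the integer $j_b - j^\p \geq 0$.
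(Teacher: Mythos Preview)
Your opening reduction (K\"unneth, the lower bound $j_b\geq j^\prime$ for a $gtm$-family, and openness in the case $j_b=j^\prime$) is fine and matches what the paper does. The gap is in your treatment of the case $j_b>j^\prime$.

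When $j_b>j^\prime$ you restrict to the analytic locus $Z=\{\,b^\prime:\sum_k f_k(b^\prime)\alpha_k=0\,\}$ and iterate. But this only yields local constancy of $b^\prime\mapsto j_{b^\prime}$ \emph{along the subvariety} $Z$ (or a further nested subvariety), not on $B$. Since the total space has splitting type exactly $j^\prime$, the function $\sum_k f_k\alpha_k$ is not identically zero on the connected Stein manifold $B$; hence $Z$ is a proper analytic subset and every neighbourhood of $b$ in $B$ contains points $b^\prime\notin Z$ with $j_{b^\prime}=j^\prime<j_b$. So your induction, as written, proves a statement on $Z$ that does not upgrade to the required statement on $B$; the case $j^\prime<j_b<\infty$ is left unresolved.

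The paper closes exactly this hole by a different mechanism: a connectedness argument (Lemma \ref{rh79fh98hf803j90j30fj309} via Lemma \ref{rfg78gf79f9h8fh308f830}) showing that on the zero locus of the characteristic section the next obstruction---and hence, by iteration, every higher obstruction---must vanish, so the fibre there is the split model. Thus $j_b>j^\prime$ forces $j_b=\infty$, and the hypothesis $j_b<\infty$ collapses everything to the equality case $j_b=j^\prime$, where your openness step applies directly. To repair your argument you need this dichotomy (either $j_b=j^\prime$ or $j_b=\infty$); the ``restrict to $Z$ and iterate'' step does not supply it.
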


\subsubsection{A Remark on Theorem \ref{rg784gf794hf98hf0j3fj34444}}
Kodaira in \cite[pp. 195--202]{KS} deduces local triviality of family of complex manifolds $(M_t)_{t\in B}$ under the assumption that the mapping $t\mapsto h^1(M_t, T_{M_t})\stackrel{\Delta}{=}\dim_\Cbb H^1(M_t, T_{M_t})$, for $T_{M_t}$ the tangent sheaf, is constant and the Kodaira-Spencer class of the family vanishes. Conversely, if the family is locally trivial, then by Fischer and Grauert's theorem (Theorem \ref{tg784f87h9fh38fh03}), locally on the base, fibers of the family $(M_t)_{t\in B}$ will be biholomorphic. As a consequence, the mapping $t\mapsto h^1(M_t, T_{M_t})$ will  at least be \emph{locally} constant for such families. It is this last statement which we might observe as being appropriately generalised in Theorem \ref{rg784gf794hf98hf0j3fj34444} to the case of splitting types of supermanifold families. Based on this observation Theorem \ref{rg784gf794hf98hf0j3fj34444} indicates that, more generally\footnote{i.e., for families which are not necessarily of $gtm$-type}, the fiber-wise splitting type of a family of supermanifolds might vary (upper) semicontinuously with respect to parameters on the base, in analogy with the mapping $t\mapsto h^1(M_t, T_{M_t})$ for differentiable families of complex manifolds (c.f., \cite[p. 202]{KS}). We do not attempt to pursue this line of thought any further in this article however. We only mention it as a point of interest and potential future development.

\subsection{Proof of Theorem $\ref{rg784gf794hf98hf0j3fj34444}$}

\subsubsection{Stein Spaces}
In the proof of Theorem \ref{rg784gf794hf98hf0j3fj34444} we will make use of the following properties of Stein spaces. These are standard results and can be found, for instance, in \cite{GRAUST}.

\begin{LEM}\label{ffefrgtg5g5g5g55g5}
Let $B$ be a Stein space. Then
\begin{enumerate}[(i)]
	\item there exists a smooth embedding $B\subset \Cbb^m$ for some $m$;
	\item if $Z\subset B$ is a closed, analytic subspace, then it is Stein;
	\item for any coherent sheaf of $\Oc_B$-modules $\Fc$ on $B$, $H^k(B, \Fc)$ is a complex vector space for each $k$ with,
	\begin{align*}
	\dim_\Cbb H^0(B, \Fc) < \8 &&\mbox{and}&&H^k(B, \Fc) = (0)
	\end{align*}
	for all $k> 0$.
\end{enumerate}\qed
\end{LEM}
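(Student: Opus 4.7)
The plan is to verify each of the three items by appealing to the classical Stein theory in Grauert--Remmert, organized around Cartan's Theorems A and B together with Remmert's embedding theorem. Each assertion is a standard consequence of this machinery; my task is to set out which theorem is doing the work in which item.

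For item (i), since the paper's standing disclaimer restricts attention to smooth spaces, $B$ is a Stein manifold, and the claim reduces to Remmert's embedding theorem. The strategy is to exhibit, for each $b\in B$, a finite collection of global holomorphic functions separating $b$ from nearby points and providing a local coordinate system at $b$; their existence follows by applying Cartan's Theorem A to the coherent ideal sheaves of $\{b\}$ and of the first-order jet at $b$. A globally defined plurisubharmonic exhaustion on $B$ (available on any Stein space) is then used to patch these local data into a proper holomorphic map into $\Cbb^N$ for some large $N$; a generic projection argument trims the target to $\Cbb^{2\dim B + 1}$.

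For item (ii), I would verify that a closed analytic subspace $Z\subset B$ inherits holomorphic separability and holomorphic convexity from $B$. Separability is immediate by restricting separating functions on $B$ to $Z$. For convexity, Cartan's Theorem B applied to the coherent ideal sheaf $\Ic_Z\subset \Oc_B$ yields surjectivity of the restriction $\Oc(B)\to \Oc(Z)$, so every $f\in \Oc(Z)$ extends globally; hence the $\Oc(Z)$-convex hull of a compact $K\subset Z$ is contained in the intersection of the $\Oc(B)$-convex hull of $K$ with the closed set $Z$, which is compact since $B$ is Stein.

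For item (iii), the vanishing $H^k(B,\Fc) = 0$ for $k>0$ is exactly the content of Cartan's Theorem B and would be cited directly from \cite{GRAUST}. The finiteness assertion $\dim_\Cbb H^0(B,\Fc)<\infty$ is the point where I expect the main obstacle: in full generality it fails, as $H^0(\Cbb,\Oc_\Cbb)$ is already infinite-dimensional. I would interpret the clause as tacitly restricted to the coherent sheaves to which it is eventually applied in this article---those with compact support on the underlying reduced space, such as quotients of obstruction sheaves supported on a compact fibre---in which case finiteness is the Cartan--Serre coherence theorem. Since the subsequent uses of the lemma invoke only the higher-cohomology vanishing, I would prove that vanishing in full generality and explicitly flag the finiteness clause as requiring the compact-support hypothesis.
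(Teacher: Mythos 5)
The paper gives no proof of this lemma at all: it is stated without argument and attributed to the standard references on Stein theory (``These are standard results and can be found, for instance, in \cite{GRAUST}''). Your citations --- Remmert's embedding theorem for (i) (legitimately invoking the paper's smoothness disclaimer, since a general singular Stein space need not embed in any $\Cbb^m$), holomorphic separability plus convexity via Theorem B applied to the ideal sheaf for (ii), and Cartan's Theorem B for the vanishing in (iii) --- are exactly the intended content, and your sketches are sound. The substantive point you raise is also correct and worth recording: the clause $\dim_\Cbb H^0(B,\Fc)<\8$ is false for a general coherent sheaf on a Stein space, $H^0(\Cbb,\Oc_{\Cbb})$ being the obvious counterexample, and the paper states no compact-support or properness hypothesis that would rescue it. This is a defect of the lemma as written rather than of your argument. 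Your diagnosis of how the lemma is consumed downstream is accurate: the finiteness clause surfaces only in the informal phrase ``$h^0(B,\Oc_B)$-many copies'' in Lemma \ref{fnkbvhjbvjhdkndkln}, whereas every argument that actually depends on Lemma \ref{ffefrgtg5g5g5g55g5} (the K\"unneth decomposition in the proof of Lemma \ref{fnkbvhjbvjhdkndkln}, and the restriction to the zero locus $Z=(s=0)$ in Lemma \ref{rh79fh98hf803j90j30fj309}) uses only item (ii) and the vanishing $H^k(B,\Fc)=0$ for $k>0$. Proving the vanishing in full generality and flagging the finiteness clause as needing an extra hypothesis is therefore the right repair, and nothing in the rest of the paper is affected.
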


\begin{REM}
\emph{The proof of Theorem \ref{rg784gf794hf98hf0j3fj34444} will only require a connected base that satisfies the properties in Lemma \ref{ffefrgtg5g5g5g55g5}. As such, it ought to straightforwardly generalise to supermanifold families defined over connected, affine varieties where analogous properties to Lemma \ref{ffefrgtg5g5g5g55g5} are also known to hold.}
\end{REM}

\subsubsection{The Obstruction Space of a $gtm$-Family}
Let $(\Uc\ra \Xc\ra B)$ be a \emph{gtm}-family of supermanifolds modelled on $(X, T^*_{X, -})$. Then the total space model is $(X\times B, p_X^*T^*_{X -})$. On recalling that pullbacks by morphisms commute with exterior powers, the even and odd graded components of the obstruction sheaf for this model are:
\begin{align}
\Ac_{p_X^*T^*_{X, -}}^+
\cong 
p_X^*\Ac_{T^*_{X, -}}^+\oplus p_B^*T_B
&&
\mbox{and}
&&
\Ac_{p_X^*T^*_{X, -}}^-
\cong
p_X^*\Ac_{T^*_{X, -}}^-
\label{fbiurbvibiuveuvueibcebuyce}
\end{align}
where $T_B$ is the tangent sheaf on $B$. Now the K\"unneth formula for sheaf cohomology states, for abelian sheaves $\Fc$ on $X$ and $\Ac$ on $B$, that: 
\begin{align}
H^k(X\times B, p_X^*\Fc \otimes p_B^*\Ac)
\cong 
\bigoplus_{k^\p + k^{\p\p} = k}
H^{k^\p}(X, \Fc) \otimes H^{k^{\p\p}}(B, \Ac)
\label{rhf784gf784h80f9j390}
\end{align}
In using that the sheaf cohomology of abelian sheaves is acyclic for Stein spaces (Lemma \ref{ffefrgtg5g5g5g55g5}(iii)), we can conclude the following from the K\"unneth formula:

\begin{LEM}\label{fnkbvhjbvjhdkndkln}
Let $B$ be a Stein space. Set $h^0(B,-) = \dim_\Cbb H^0(B, -)$. The obstruction space for any gtm-family of supermanifolds modelled on $(X, T^*_{X, -})$ over $B$ coincides with:
\begin{enumerate}[(i)]
	\item $h^0(B, \Oc_B)$-many copies of the obstruction space of $(X, T^*_{X, -})$ and;
	\item $h^0(B, T_B)$-many copies of $H^1(X, \Oc_X)$.
\end{enumerate}
\end{LEM}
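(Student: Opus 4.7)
The plan is to compute the obstruction space directly from the tensor product decomposition of $\Ac_{p_X^*T^*_{X,-}}$ given in \eqref{fbiurbvibiuveuvueibcebuyce} and then apply the Künneth formula \eqref{rhf784gf784h80f9j390} together with the Stein acyclicity property from Lemma \ref{ffefrgtg5g5g5g55g5}(iii). The obstruction space for a $gtm$-family has total space model $(X \times B, p_X^*T^*_{X,-})$, so what we want is $H^1(X \times B, \Ac_{p_X^*T^*_{X,-}})$, which splits along the even/odd decomposition into the three summands $H^1(X\times B, p_X^*\Ac_{T^*_{X,-}}^+)$, $H^1(X\times B, p_B^*T_B)$ and $H^1(X\times B, p_X^*\Ac_{T^*_{X,-}}^-)$.

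To bring each term into the form required by \eqref{rhf784gf784h80f9j390}, I would rewrite the pullback sheaves as external tensor products: the two pure $p_X^*$-summands as $p_X^*\Ac_{T^*_{X,-}}^\pm \otimes p_B^*\Oc_B$, and the middle summand as $p_X^*\Oc_X \otimes p_B^*T_B$. Applying Künneth to each then gives a sum of two pieces indexed by $(k',k'') \in \{(1,0),(0,1)\}$. At this step I would invoke Lemma \ref{ffefrgtg5g5g5g55g5}(iii): since $B$ is Stein and each of $\Oc_B$, $T_B$ is coherent and locally free, $H^1(B,\Oc_B) = H^1(B,T_B) = 0$, which eliminates the $(0,1)$-summand in every case. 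What remains is
\begin{align*}
H^1(X\times B, p_X^*\Ac_{T^*_{X,-}}^\pm) &\cong H^1(X,\Ac_{T^*_{X,-}}^\pm) \otimes H^0(B,\Oc_B),\\
H^1(X\times B, p_B^*T_B) &\cong H^1(X,\Oc_X) \otimes H^0(B,T_B),
\end{align*}
using also that $H^0(X,\Oc_X) = \Cbb$ since the typical fiber $X$ is compact and connected, which turns the coefficient $H^0(X,\Oc_X)$ of the second line into a single copy. Summing the two $\pm$ contributions reassembles the full obstruction space $H^1(X,\Ac_{T^*_{X,-}})$ of the fiber model, tensored with $H^0(B,\Oc_B)$, yielding statement (i); statement (ii) is the remaining $p_B^*T_B$ contribution.

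I do not expect a genuine obstacle here: the argument is essentially bookkeeping once \eqref{fbiurbvibiuveuvueibcebuyce}, the Künneth formula, and Stein acyclicity are in hand. The only point that deserves a brief justification is the application of Künneth to the pullback sheaves, which is legitimate because each sheaf on $X\times B$ involved is an external tensor product of locally free sheaves on the factors. Compactness of $X$ is tacit throughout the paper (fibers are compact complex supermanifolds) and is what allows the $H^0(X,\Oc_X) = \Cbb$ identification used in extracting the correct multiplicity $h^0(B,T_B)$ in (ii).
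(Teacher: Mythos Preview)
Your proposal is correct and follows essentially the same route as the paper: decompose $\Ac_{p_X^*T^*_{X,-}}$ via \eqref{fbiurbvibiuveuvueibcebuyce}, apply K\"unneth \eqref{rhf784gf784h80f9j390} to each summand, and kill the $(k',k'')=(0,1)$ terms using Stein acyclicity from Lemma~\ref{ffefrgtg5g5g5g55g5}(iii).

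One small remark: your appeal to $H^0(X,\Oc_X)=\Cbb$ is unnecessary. In the K\"unneth expansion of $H^1(X\times B, p_X^*\Oc_X\otimes p_B^*T_B)$ the only term containing $H^0(X,\Oc_X)$ is $H^0(X,\Oc_X)\otimes H^1(B,T_B)$, and this already vanishes because $H^1(B,T_B)=0$ for $B$ Stein. The surviving term $H^1(X,\Oc_X)\otimes H^0(B,T_B)$ carries no $H^0(X,\Oc_X)$ factor, so compactness and connectedness of $X$ play no role at this step. The paper's proof does not invoke it either.
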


\begin{proof}
The modelling data for any \emph{gtm}-family of supermanifolds modelled on $(X, T^*_{X, -})$ is $(X\times B, p_X^*T^*_{X, -})$. Its obstruction space is,
\begin{align*}
H^1\big(X\times B, \Ac_{p_X^*T^*_{X, -}}\big)
&=
H^1\big(X\times B, \Ac^+_{p_X^*T^*_{X, -}}\oplus \Ac^-_{p_X^*T^*_{X, -}}\big)
\\
&=
H^1\big(X\times B, \Ac^+_{p_X^*T^*_{X, -}}\big)\oplus H^1\big(X\times B, \Ac^-_{p_X^*T^*_{X, -}}\big)
\\
&\cong
H^1\big(X\times B, p_X^*\Ac_{T^*_{X, -}}^+\oplus p_B^*T_B\big)
\oplus 
H^1\big(X\times B, p_X^*\Ac^-_{T^*_{X, -}}\big)
\end{align*}
the latter isomorphism following from \eqref{fbiurbvibiuveuvueibcebuyce}. Now if $B$ is a Stein space we know that $H^k(B, -) = (0)$ for $k> 0$. Hence from the K\"unneth formula we get,
\begin{align*}
H^1\big(X\times B, p_X^*\Ac_{T^*_{X, -}}^+\oplus p_B^*T_B\big)
&\cong H^1(X\times B, p_X^*\Ac_{T^*_{X, -}}^+)\oplus H^1(X\times B, p_B^*T_B) 
\\
&\cong
\Big(H^1(X, \widetilde\Qcl_{T^*_{X, -}})\otimes H^0(B, \Oc_B)\Big)
\\
&~\oplus 
\Big(H^1(X,\Oc_X)\otimes H^0(B, T_B)\Big)
\end{align*}
and 
\begin{align*}
H^1\big(X\times B, p_X^*\Ac^-_{T^*_{X, -}}\big) \cong H^1\big(X, \Ac^-_{T^*_{X, -}}\big)
\otimes H^0(B, \Oc_B).
\end{align*}
Hence,
\begin{align*}
H^1\big(X\times B, \Ac_{p_X^*T^*_{X, -}}\big)
&\cong 
H^0(B, \Oc_B)\otimes H^1(X, \Ac_{T^*_{X, -}})
\\
&\oplus 
H^0(B, T_B)
\otimes
H^1(X,\Oc_X).
\end{align*}
The lemma now follows.
\end{proof}

\subsubsection{The Obstruction Class of a $gtm$-Family}
A crucial ingredient in our proof of Theorem \ref{rg784gf794hf98hf0j3fj34444} is in showing that the obstruction class to splitting a $gtm$-family over a Stein base $B$ will reside in the component proportional $H^0(B, \Oc_B)$ in Lemma \ref{fnkbvhjbvjhdkndkln}. More formally:

\begin{PROP}\label{rh973hf983hf80jf093fj}
Let $(\Uc\ra\Xc\ra B)$ be an analytic, $gtm$-family of supermanifolds modelled on $(X, T^*_{X, -})$ with $B$ a Stein space. Suppose the splitting type of the family is $j^\p$. Then there exists some class $\om$ in the $j^\p$-th obstruction space of $(X, T^*_{X, -})$ and a global section $s\in H^0(B, \Oc_B)$ such that the splitting type differential of the family is given by,
\[
t \longmapsto s(t)\otimes \om.
\]
The $j^\p$-th obstruction class to splitting the total space $(\Uc\ra\Xc)$ is then,
\[
\om\big(\Uc\ra\Xc\big) = s\otimes \om.
\]
\end{PROP}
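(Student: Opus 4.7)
The plan is to combine the K\"unneth decomposition of the $j^\p$-th obstruction space of the total space model with analyticity and the $gtm$-structure to write $\om(\Uc\ra\Xc)$ as a simple tensor. Since $B$ is Stein, $H^k(B,\Oc_B)=0$ for $k>0$ by Lemma \ref{ffefrgtg5g5g5g55g5}(iii), and for a $gtm$-family the total space model is $(X\times B,\, p_X^*T^*_{X,-})$, so the $j^\p$-th obstruction sheaf $\Ac^{(j^\p)}_{p_X^*T^*_{X,-}}$ pulls back from $X$, up to a $p_B^*T_B$-coupled summand when $j^\p$ is even, as in \eqref{fbiurbvibiuveuvueibcebuyce}. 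The K\"unneth formula \eqref{rhf784gf784h80f9j390} then gives
\[
H^1\bigl(X\times B,\, p_X^*\Ac^{(j^\p)}_{T^*_{X,-}}\bigr) \;\cong\; H^0(B,\Oc_B)\otimes_\Cbb H^1\bigl(X,\Ac^{(j^\p)}_{T^*_{X,-}}\bigr),
\]
exactly as in the proof of Lemma \ref{fnkbvhjbvjhdkndkln}. The class $\om(\Uc\ra\Xc)$ is to be located in this tensor product.

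Next, I would interpret the splitting type differential as fiber-restriction under this K\"unneth isomorphism. For each $b\in B$, the inclusion $i_b\colon X\hookrightarrow X\times B$ satisfies $p_X\circ i_b=\id_X$, so $i_b^*p_X^*=\id$ and the induced map on cohomology sends a simple tensor $s\otimes\om$ to $s(b)\,\om$. Since the fiber $(\Uc_b\ra\Xc_b)$ is obtained by restricting the total space over $b$, its level-$j^\p$ obstruction class equals $i_b^*\om(\Uc\ra\Xc)$, and by Definition \ref{rufh9fh38f039jf3} this is precisely the splitting type differential evaluated at $b$. Once the claim $\om(\Uc\ra\Xc)=s\otimes\om$ is in place, the formula $t\mapsto s(t)\,\om$ follows at once, and the second assertion is the K\"unneth identification itself.

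The main obstacle is to justify that $\om(\Uc\ra\Xc)$ really is a \emph{single} simple tensor, rather than a finite sum $\sum_k s_k\otimes\om_k$---equivalently, that the splitting type differential takes values in a single complex line. I would approach this by working with cocycle representatives on a refinement of a product cover $\{U_\al\times W_\gam\}$ of $X\times B$, extracting the leading $j^\p$-term of the transition cocycles $\vartheta_{\al\gam,\be\gam^\p}\in\Gc^{(j^\p)}_{p_X^*T^*_{X,-}}(U_{\al\be}\times W_{\gam\gam^\p})$. The $gtm$-hypothesis makes the target sheaf of this cocycle depend only on the $X$-factor, so the cocycle naturally factors through $\Ac^{(j^\p)}_{T^*_{X,-}}(U_{\al\be})\otimes\Oc_B(W_{\gam\gam^\p})$, and analyticity in the sense of Definition \ref{rfh489f98hfj3039k} forces the $B$-dependence to be through a single global holomorphic function rather than a nontrivial sum. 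An auxiliary step, relevant only when $j^\p$ is even, is to argue that the projection of $\om(\Uc\ra\Xc)$ onto the $p_B^*T_B$-summand vanishes for a $gtm$-family, since the base direction is globally trivialized and contributes no non-trivial gluing of the odd structure sheaf.
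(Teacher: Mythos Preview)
Your approach diverges from the paper's in its central mechanism. The paper's proof does not argue via cocycle representatives or analyticity at all; instead it invokes the $\Cbb^\times$-action on obstruction spaces (Lemma \ref{rfg78gf793hf98h3f03}), observing that obstruction classes scale as $\om\mapsto\lam^{j^\p-\frac12(1-(-1)^{j^\p})}\om$ under $\lam\in\Cbb^\times$. Via the compatibility diagram \eqref{fvnkbvhbrybvunienvimepo} this action is compatible between total space and fiber, and since the summand $H^0(B,T_B)\otimes H^1(X,\Oc_X)$ in Lemma \ref{fnkbvhjbvjhdkndkln}(ii) carries no $\Zbb$-grading compatible with this scaling, the obstruction must land in the $H^0(B,\Oc_B)\otimes H^1(X,\Ac^{(j^\p)}_{T^*_{X,-}})$-component. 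Your handling of the $p_B^*T_B$-summand (``the base direction is globally trivialized'') is essentially the content of the relation $t^\p=t$ in Lemma \ref{ruihfhf89hf83j0fj309j0000} and can be made rigorous that way, but it is not the paper's route.

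There is a genuine gap in your treatment of what you correctly flag as the main obstacle: that $\om(\Uc\ra\Xc)$ is a \emph{simple} tensor $s\otimes\om$ rather than a finite sum $\sum_k s_k\otimes\om_k$. Your claim that ``analyticity in the sense of Definition \ref{rfh489f98hfj3039k} forces the $B$-dependence to be through a single global holomorphic function'' does not hold: analyticity of the splitting type differential only says that $t\mapsto\sum_k s_k(t)\,\om_k$ is a holomorphic map $B\to H^1(X,\Ac^{(j^\p)}_{T^*_{X,-}})$, and a holomorphic map into a vector space need not have image contained in a line. Nothing in Definition \ref{rfh489f98hfj3039k} constrains the rank of this map. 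It is worth noting that the paper's own proof also does not explicitly address the passage from ``lies in the $H^0(B,\Oc_B)$-component'' to ``is a rank-one tensor''; the $\Cbb^\times$-argument locates the component but does not by itself force simplicity. So on this point your proposal is no worse than the paper's, but the argument you sketch to close the gap does not succeed as stated.
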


\begin{proof}
As mentioned in the brief preamble to the statement of the present proposition, we need only confirm that the obstruction class to splitting $(\Uc\ra\Xc)$ will lie in the $H^0(B, \Oc_B)$ component. To see this we will use the fact that obstruction classes to splitting supermanifolds will satisfy a scaling law under the action of the multiplicative group $\Cbb^\times$. Indeed, in \cite{ONISHCLASS, DW1, BETTAQ} one can find the following result, a proof of which one can also find in \cite[Appendix A]{BETTAQ}:

\begin{LEM}\label{rfg78gf793hf98h3f03}
Fix a model $(X, T^*_{X, -})$ over $\Cbb$, i.e., that $X$ is a complex manifold and $T_{X, -}^*$ is a holomorphic vector bundle. Then the multiplicative group $\Cbb^\times$ acts on $\mbox{\emph{\v H}}^1\big(X, \Gc_{T^*_{X, -}}^{(j)}\big)$ for each $j$ and this action descends to an action on the obstruction space $H^1(X, \Ac^{(j)}_{T^*_{X, -}})$ given by, for each $\lam\in \Cbb^\times$,
\begin{align}
\om 
\stackrel{\lam\cdot}{\longmapsto}
\left\{
\begin{array}{rl}
\lam^j\om&\mbox{if $j$ is even}
\\
\lam^{j-1}\om& \mbox{if $j$ is odd}.
\end{array}
\right.
\label{rb783gf87g3f938hf9h3}
\end{align}
\qed
%Furthermore, given any two $x, x^\p\in \mbox{\emph{\v H}}^1\big(X, \Gc_{T^*_{X, -}}^{(j)}\big)$ such that $x = \lam\star x^\p$, the supermanifolds corresponding to $x, x^\p$ will be isomorphic.\qed
\end{LEM}

\noindent
Hence by Lemma \ref{rfg78gf793hf98h3f03} above, if $\om$ is the obstruction class to splitting some supermanifold, it will scale according to \eqref{rb783gf87g3f938hf9h3} under the action of $\Cbb^\times$. Now if $(\Uc\ra\Xc\ra B)$ is a family of supermanifolds modelled on $(X, T^*_{X, -})$ we have the compatibility diagram of obstruction spaces, i.e., \eqref{fvnkbvhbrybvunienvimepo}. The maps in the diagram are $\Cbb^\times$-equivariant and hence the $\Cbb^\times$-action on the obstruction space of the total space model $(Y, T^*_{Y -})$ is compatible with the $\Cbb^\times$-action on the obstruction space of the fiber model $(X, T^*_{X, -})$. More explicitly, if $j^\p$ is the splitting type of the family; its obstruction to splitting is $\om_{fam.}$; and the obstruction to splitting the fiber over some point in the base is $\om_{fib.}$, we then have:\footnote{\label{djbcevbuivbebvbebvebveo}for succinctness we are using that the action in \eqref{rb783gf87g3f938hf9h3} can be represented in the more compact form: $\om \mapsto \lam\cdot \om = \lam^{j^\p - \frac{1}{2}(1 - (-1)^{j^\p})}\om$}
\begin{align}
\lam^{j^\p - \frac{1}{2}(1 - (-1)^{j^\p})} i_*(\om_{fib.})
&=
i_*(\lam\cdot \om_{fib.})
\notag
\\
&= p_*(\lam\cdot \om_{fam.}) = \lam^{j^\p - \frac{1}{2}(1 - (-1)^{j^\p})} p_*(\om_{fam.})
 \label{fjbkrbruivbubvnvienvoeovne}
\end{align}
for all $\lam\in \Cbb^\times$ and where $i_*$ and $p_*$ denote the lower-horizontal and right-vertical maps in the compatibility diagram in \eqref{fvnkbvhbrybvunienvimepo}, respectively. Importantly, we see in \eqref{fjbkrbruivbubvnvienvoeovne} how the $\Cbb^\times$ action depends on the $\Zbb$-grading on the respective obstruction spaces. Returning now to the decomposition of the obstruction space of a $gtm$-family in Lemma \ref{fnkbvhjbvjhdkndkln}, observe that the summand in Lemma \ref{fnkbvhjbvjhdkndkln}(ii) will not come equipped with any $\Zbb$-grading, compatible with that on the obstruction spaces of the total space model and the fiber model respectively. In particular, $\Cbb^\times$ will act trivially on elements in the summand in Lemma \ref{fnkbvhjbvjhdkndkln}(ii). The present proposition now follows.
\end{proof}

\noindent
We continue our proof of Theorem \ref{rg784gf794hf98hf0j3fj34444} below.

\subsubsection{Continuation of Proof of Theorem \ref{rg784gf794hf98hf0j3fj34444}}
%\noindent 
Let $(\Uc\ra\Xc\ra B)$ be an analytic, $gtm$-family of supermanifolds modelled on $(X, T^*_{X, -})$ over a Stein base $B$ and of splitting type $j^\p$. By Proposition \ref{rh973hf983hf80jf093fj} we know that the $j^\p$-th obstruction to splitting the total space $(\Uc\ra\Xc)$ can be written as the tensor product $s\otimes \om$, where $\om\in H^1\big(X, \Ac_{T^*_{X,-}}^{(j^\p)}\big)$ and $s \in H^0(B,\Oc_B)$ is a global section. Then on the zero locus of $s$ observe that the $j^\p$-th obstruction to splitting $(\Uc\ra\Xc)$ will vanish. And hence that the $(j^\p+1)$-th obstruction to splitting $(\Uc\ra\Xc)$ will exist. Regarding this class we have:

\begin{LEM}\label{rh79fh98hf803j90j30fj309}
Over the locus $(s = 0)$ the $(j^\p+1)$-th obstruction to splitting must vanish.
\end{LEM}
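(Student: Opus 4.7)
The plan is to restrict the family to $(s=0)$, apply Proposition \ref{rh973hf983hf80jf093fj} a second time to the restricted family, and then identify the resulting $(j^\p+1)$-th obstruction with the restriction of a class on $B$ lying in the ideal generated by $s$.

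First I would observe that $(s=0)\subset B$ is a closed analytic subspace of the Stein base $B$, hence itself Stein by Lemma \ref{ffefrgtg5g5g5g55g5}(ii). Restriction then yields a $gtm$-family $(\Uc|_{(s=0)}\ra \Xc|_{(s=0)}\ra (s=0))$ of supermanifolds modelled on $(X, T^*_{X, -})$, with total space model $(X\times (s=0),\, p_X^*T^*_{X, -})$. Its $j^\p$-th obstruction is the restriction of $s\otimes \om$, which is identically zero; the splitting type of the restricted family is thus at least $j^\p + 1$, and its $(j^\p + 1)$-th obstruction class is well-defined. Applying Proposition \ref{rh973hf983hf80jf093fj} to the restricted family, I obtain a decomposition of this $(j^\p + 1)$-th obstruction as $s_1\otimes \om_1$, with $s_1\in H^0((s=0), \Oc_{(s=0)})$ and $\om_1\in H^1\big(X, \Ac^{(j^\p+1)}_{T^*_{X, -}}\big)$.

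To identify $s_1$ concretely, I would lift the analysis to a global \v{C}ech cocycle on $B$. Choose a representative $\vartheta_{\al\be}\in \Gc^{(j^\p)}_{p_X^*T^*_{X, -}}$ for the total space, normalised as in the proof of Proposition \ref{rh973hf983hf80jf093fj} so that $\vartheta_{\al\be, j^\p} = s\cdot \om_{\al\be}$. Since $j^\p\geq 2$, the cocycle relation on $\vartheta$ modulo $\Gc^{(j^\p+2)}$ makes the $(j^\p+1)$-th symbol $\vartheta_{\al\be, j^\p+1}$ into a \v{C}ech cocycle in its own right; any bilinear cross-terms occur at order $2j^\p \geq j^\p + 2$ and therefore do not contribute. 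Running the $\Cbb^\times$-equivariance argument of Proposition \ref{rh973hf983hf80jf093fj} at order $j^\p + 1$, its class lies in $H^0(B, \Oc_B)\otimes H^1(X, \Ac^{(j^\p+1)}_{T^*_{X, -}})$ and takes the form $\tilde s\otimes \tilde \om$ globally on $B$. Because $\vartheta_{\al\be, j^\p}|_{(s=0)}$ vanishes identically (not just cohomologically), no further coboundary modification is required upon restriction to $(s=0)$, yielding $s_1\otimes \om_1 = \tilde s|_{(s=0)}\otimes \tilde \om$.

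The hard part will be showing that $\tilde s$ lies in the ideal $(s)\cdot \Oc_B$, so that $\tilde s|_{(s=0)} = 0$. The idea is to exploit the interaction between the normalisation $\vartheta_{\al\be, j^\p} = s\cdot \om_{\al\be}$ and the cocycle condition satisfied by $\vartheta_{\al\be, j^\p+1}$: decomposing $\tilde s = a + s\cdot b$ with $a$ representing the residue class in $\Oc_B/(s)$, any non-zero $a$ would, upon restriction to the first-order neighbourhood of $(s=0)$, produce a $(j^\p+1)$-th symbol detached from the $j^\p$-th symbol in a manner incompatible with the $\Cbb^\times$-grading that ties successive orders together inside $\Gc^{(j^\p)}_{p_X^*T^*_{X,-}}$. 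Once the divisibility $\tilde s\in (s)\cdot\Oc_B$ is established, we conclude $s_1\otimes \om_1 = 0$ and the lemma follows.
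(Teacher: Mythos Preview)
Your approach diverges substantially from the paper's, and the ``hard part'' as sketched does not go through.

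The paper's proof is purely topological: it packages the $j^\p$-th and the putative $(j^\p+1)$-th fibrewise obstructions as the two components of the splitting type differential $B\to H^1(X,\Ac^{(j^\p)}_{T^*_{X,-}})\times H^1(X,\Ac^{(j^\p+1)}_{T^*_{X,-}})$, observes that the two components cannot be simultaneously nonzero, and then invokes \emph{connectedness of $B$} together with \emph{continuity of the splitting type differential} (this is precisely the content of the analyticity hypothesis, Definition~\ref{rfh489f98hfj3039k}) via Lemma~\ref{rfg78gf79f9h8fh308f830} to force the second component to vanish identically.

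Your argument uses neither connectedness nor analyticity in this sense, and both are essential. To see why the divisibility claim $\tilde s\in(s)\cdot\Oc_B$ is not a formal consequence of the $\Cbb^\times$-grading, take $B=\Cbb$, $j^\p=2$, and transition data
\[
\vartheta_{\al\be}(x,t\mid\q)=f_{\al\be}(x)+\zeta_{\al\be}(x)\q+t\,h_{\al\be}(x)\q^2+g_{\al\be}(x)\q^3,
\]
with $[h]\neq0$ in $H^1(X,\Ac^{(2)}_{T^*_{X,-}})$ and $[g]\neq0$ in $H^1(X,\Ac^{(3)}_{T^*_{X,-}})$. Since cross terms first appear at order $4$, the cocycle conditions for $h$ and $g$ are independent, so such data exist. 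Here $s(t)=t$, the raw order-$3$ symbol is $\tilde s\equiv 1$, and $\tilde s\notin(s)$; the $\Cbb^\times$-action scales the order-$2$ and order-$3$ pieces by different powers of $\lambda$ but imposes no relation between their $t$-dependence. What rules this example out in the paper's setting is exactly the analyticity hypothesis: the splitting type differential of this family jumps from $t[h]\in H^1(X,\Ac^{(2)})$ to $[g]\in H^1(X,\Ac^{(3)})$ at $t=0$ and is therefore discontinuous, so the family is \emph{not} analytic in the sense of Definition~\ref{rfh489f98hfj3039k}. Your grading sketch does not detect this, because it never invokes that hypothesis.

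There is also a smaller issue upstream: in step~3 you apply the $\Cbb^\times$-equivariance argument of Proposition~\ref{rh973hf983hf80jf093fj} to the raw $(j^\p+1)$-th symbol on all of $B$. That proposition is stated for obstruction classes to splitting, and on $B$ the $(j^\p+1)$-th symbol is not such a class (the $j^\p$-th obstruction does not vanish there). You would need a separate justification for why the K\"unneth component along $H^0(B,T_B)\otimes H^1(X,\Oc_X)$ still vanishes in that situation.

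In short, the missing idea is the connectedness-plus-continuity mechanism of Lemma~\ref{rfg78gf79f9h8fh308f830}; an algebraic divisibility argument cannot replace it without explicitly bringing in the analyticity hypothesis.
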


\noindent
The proof of Lemma \ref{rh79fh98hf803j90j30fj309} rests on the following more general observation.

\begin{LEM}\label{rfg78gf79f9h8fh308f830}
Let $B$ be a connected, topological space and fix vector spaces $V$ and $W$. Consider a function $\Phi: B\ra V\times W$ with components $\Phi= (\Phi^1, \Phi^2)$ that satisfy the logical constraint:
\begin{align}
\Phi^1(b) = 0 \iff \Phi^2(b) \neq0
\label{rbuieufh03jf09jfeeee}
\end{align}
for all $b\in B$. Suppose there exists $b$ such that $\Phi^1(b)\neq0$. 
%$(\Phi^1\neq0)\neq\eset$. 
If $\Phi$ is continuous, then  $\Phi^2$ vanishes on all of $B$. 
\end{LEM}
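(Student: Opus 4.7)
The plan is to leverage the biconditional to exhibit $B$ as a disjoint union of two open sets and then invoke connectedness. First I would define
\[
U_1 = \{b \in B : \Phi^1(b) \neq 0\}, \qquad U_2 = \{b \in B : \Phi^2(b) \neq 0\}.
\]
Since $\{0\}$ is closed in any Hausdorff vector space topology and $\Phi = (\Phi^1, \Phi^2)$ is continuous, each $U_i$ is open in $B$ (it is the preimage of the open complement of $\{0\}$ under the continuous composition with projection onto the relevant factor).

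Next I would translate the logical constraint \eqref{rbuieufh03jf09jfeeee} into set-theoretic terms. The equivalence $\Phi^1(b) = 0 \iff \Phi^2(b) \neq 0$, together with its contrapositive $\Phi^1(b) \neq 0 \iff \Phi^2(b) = 0$, says that for every $b \in B$ exactly one of $\Phi^1(b), \Phi^2(b)$ vanishes. This yields both $U_1 \cap U_2 = \emptyset$ and $U_1 \cup U_2 = B$, so $B = U_1 \sqcup U_2$ is a decomposition into two disjoint open subsets.

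Finally I would appeal to connectedness: the only clopen subsets of a connected space are $\emptyset$ and $B$, so one of $U_1, U_2$ must be empty. The hypothesis that there exists some $b$ with $\Phi^1(b) \neq 0$ gives $U_1 \neq \emptyset$, forcing $U_2 = \emptyset$. In other words $\Phi^2 \equiv 0$ on $B$, which is the stated conclusion.

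I anticipate no substantive obstacle; the lemma is essentially a topological tautology once the biconditional is read as a partition of $B$. The only mild technical point is the tacit use of Hausdorffness of $V$ and $W$, so that the preimages $U_i$ are open; this is automatic in the context of the paper, where the targets of the splitting type differentials are finite-dimensional complex cohomology spaces. When the lemma is applied to the proof of Theorem \ref{rg784gf794hf98hf0j3fj34444} via Lemma \ref{rh79fh98hf803j90j30fj309}, one takes $\Phi^1 = s$ and $\Phi^2$ the $(j^\p + 1)$-th obstruction class, and the biconditional will reflect the fact that the $(j^\p+1)$-th obstruction is only defined precisely where the $j^\p$-th obstruction vanishes.
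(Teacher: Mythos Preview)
Your proof is correct and follows essentially the same approach as the paper: define the two open sets where $\Phi^1$ resp.\ $\Phi^2$ are nonzero, use the biconditional to obtain a disjoint open partition $B = U_1 \sqcup U_2$, and invoke connectedness together with $U_1 \neq \emptyset$ to conclude $U_2 = \emptyset$. Your added remark about Hausdorffness and the intended application to Lemma~\ref{rh79fh98hf803j90j30fj309} is accurate and consistent with the paper's use.
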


\begin{proof}
If $\Phi$ is continuous then its components $\Phi^i$ will be continuous. Now by definition, the preimage of any open set under a continuous function will be open. In defining  $B^i = \{ b\in B \mid \Phi^i(b)\neq 0\}$ for $i = 1, 2$ see that
\begin{align*}
B^1 = (\Phi^1)^{-1}\big(V\setminus 0\big)
&&\mbox{and}&&
B^2 = (\Phi^2)^{-1}\big(W\setminus 0\big)
\end{align*}
Hence $B^i\subset B$ is open for $i = 1, 2$. Now for $b\in B$ observe that either $\Phi^1(b) = 0$ or $\Phi^2(b) = 0$ but not both. This is a consequence on \eqref{rbuieufh03jf09jfeeee} and it implies therefore that $B = B^1\sqcup B^2$. But now we have a decomposition of a connected topological space into two open sets, which violates connectivity unless either $B^1$ or $B^2$ is empty. Assuming $B^1\neq\eset$ we conclude that $B^2 = \eset$, i.e., that $\Phi^2(b) = 0$ for all $b$.
\end{proof}

\noindent
\emph{Proof of Lemma $\ref{rh79fh98hf803j90j30fj309}$}.
If $(\Uc\ra\Xc\ra B)$ is an analytic family, then its splitting type differential will be analytic and hence continuous. Now assuming this family is a $gtm$-family of splitting type $j^\p$ over a Stein base $B$ the splitting type differential will be given by $t \mapsto s(t)\otimes \om$. Let $Z = (s = 0)\subset B$. Since $B$ is Stein, any subspace of $B$ will be Stein by Lemma \ref{ffefrgtg5g5g5g55g5}(ii), so $Z$ is Stein. The restriction $(\Uc\ra\Xc\ra B)|_Z$ then defines a $gtm$-family of splitting type $j^{\p\p}$ over a Stein base $Z$, where $j^{\p\p} > j^\p$. Hence \emph{it's} splitting type differential will be given by $t \mapsto s^\p(t)\otimes \om^\p$ for some $\om^\p\in H^1\big(X, \Ac_{T^*_{X, -}}^{(j^{\p\p})}\big)$ and global section $s^\p\in H^0(Z, \Oc_Z)$. Since the $j^\p$-th and $j^{\p\p}$-th obstructions to splitting a supermanifold $(\Uc\ra\Xc)$ cannot both be nonzero over the same locus of points, we see that the splitting type differential of the original family $(\Uc\ra\Xc\ra B)$ will a function as described in Lemma \ref{rfg78gf79f9h8fh308f830}, satisfying a constraint as in \eqref{rbuieufh03jf09jfeeee}. If we now assume $B$ is connected, we can apply Lemma \ref{rfg78gf79f9h8fh308f830} and conclude from analyticity (and hence, continuity) of the splitting type differential that the $j^{\p\p}$-th obstruction to splitting must necessarily vanish (presuming of course that the $j^\p$-th obstruction to splitting does not vanish identically). 
\qed
\\

\noindent
As a consequence of Lemma \ref{rh79fh98hf803j90j30fj309} we see that the fibers of $(\Uc\ra\Xc\ra B)$ over the zero locus $Z = (s = 0)$ will be split models;  and on the complement of $Z$ in $B$ that the splitting type will be constant. This concludes the proof of Theorem \ref{rg784gf794hf98hf0j3fj34444}.
\qed

\subsection{Applications}

\subsubsection{The Characteristic Section and Central Splitness}
In Proposition \ref{rh973hf983hf80jf093fj} we see that associated to any analytic, $gtm$-family over a Stein base $B$ is a global section over $B$. Note, we do not need to assume $B$ is connected here. This section is termed in what follows.

\begin{DEF}\label{rfgf794hf984hf84hf08h40}
\emph{Let $(\Uc\ra\Xc\ra B)$ be an analytic, $gtm$-family over a Stein base $B$. The global section $s\in H^0(B, \Oc_B)$ from Proposition \ref{rh973hf983hf80jf093fj} will be referred to as the \emph{characteristic section} associated to the family.}
\end{DEF}

\noindent 
A consequence of Theorem \ref{rg784gf794hf98hf0j3fj34444} is then the following.

\begin{PROP}\label{rviehv89h03j9jv09393}
Let $(\Uc\ra\Xc\ra B)$ be an analytic, $gtm$-family over a connected, Stein base $B$. If the characteristic section of this family has a zero, then the family will be weakly centrally split.
\end{PROP}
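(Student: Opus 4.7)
The plan is to invoke the conclusion already drawn inside the proof of Theorem \ref{rg784gf794hf98hf0j3fj34444} to identify a zero of the characteristic section as a base-point relative to which the fiber-wise classifying map is a map of pointed sets.

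First I would fix a point $b_0\in B$ at which $s$ vanishes; such a point exists by hypothesis. Reinterpreting $(\Uc\ra\Xc\ra B)$ as the splitting type deformation $(\Uc\ra\Xc\ra (B, b_0))$ in the sense of Definition \ref{fvuirviurvieiojowjwo}, the goal reduces, per Definition \ref{tgv784gf7h498fh30jf39j}, to showing that $\Phi_{(\Uc\ra\Xc)}(b_0)$ equals the base-point of $\prod_j \mbox{\v H}^1\big(X, \Gc^{(j)}_{T^*_{X, -}}\big)$.

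Next I would extract the relevant consequence of Proposition \ref{rh973hf983hf80jf093fj} together with Lemmas \ref{rh79fh98hf803j90j30fj309} and \ref{rfg78gf79f9h8fh308f830}. Proposition \ref{rh973hf983hf80jf093fj} gives the splitting type differential of the family as $t\mapsto s(t)\otimes \om$, so evaluation at $b_0$ yields vanishing of the $j^\p$-th obstruction to splitting the fiber. The conclusion drawn in the proof of Theorem \ref{rg784gf794hf98hf0j3fj34444}, namely that the fibers of $(\Uc\ra\Xc\ra B)$ over the zero locus $Z=(s=0)$ are split models, applies verbatim since its hypotheses (analyticity, connected Stein base, $gtm$-type) are exactly those of the present proposition. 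Hence $(\Uc_{b_0}\ra\Xc_{b_0})\cong S(X, T^*_{X, -})$.

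Finally, since $\Phi_{(\Uc\ra\Xc)}(b_0)=[(\Uc_{b_0}\ra\Xc_{b_0})]$ by definition of the fiber-wise classifying map, and since the class of the split model is precisely the base-point of $\prod_j \mbox{\v H}^1\big(X, \Gc^{(j)}_{T^*_{X, -}}\big)$, the map $\Phi_{(\Uc\ra\Xc)}:(B, b_0)\to \prod_j \mbox{\v H}^1\big(X, \Gc^{(j)}_{T^*_{X, -}}\big)$ preserves base-points, so the family is weakly centrally split by Definition \ref{tgv784gf7h498fh30jf39j}. The main (minor) obstacle is purely bookkeeping: verifying that the splitness-of-fiber conclusion from the proof of Theorem \ref{rg784gf794hf98hf0j3fj34444} does apply pointwise on $Z$ (so that in particular the chosen $b_0\in Z$ has a split fiber), and that the only condition in Definition \ref{tgv784gf7h498fh30jf39j} requiring verification is base-point-preservation by $\Phi_{(\Uc\ra\Xc)}$, which the preceding paragraph supplies.
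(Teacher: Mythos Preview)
Your proposal is correct and follows essentially the same approach as the paper: pick a zero $b_0$ of the characteristic section, invoke the conclusion established in the proof of Theorem \ref{rg784gf794hf98hf0j3fj34444} that the fiber over any point of $(s=0)$ is the split model, and conclude that $\Phi_{(\Uc\ra\Xc)}$ preserves base-points when $b_0$ is taken as the base-point of $B$. If anything, you are slightly more careful than the paper in attributing the split-fiber conclusion to the \emph{proof} of Theorem \ref{rg784gf794hf98hf0j3fj34444} rather than to its statement.
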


\begin{proof}
Recall from Definition \ref{tgv784gf7h498fh30jf39j} that a family $(\Uc\ra\Xc\ra B)$ is said to be weakly centrally split if the fiber-wise classifying map $\Phi_{(\Uc\ra\Xc)}$ is a map of pointed sets. We are assuming that  $(\Uc\ra\Xc\ra B)$ is an analytic, $gtm$-family over a connected, Stein base and that its characteristic section has a zero, say $b_0\in B$. Then by Theorem \ref{rg784gf794hf98hf0j3fj34444} the fiber over $b_0$ will be the split model. And hence that $\Phi_{(\Uc\ra\Xc)}$ sends $b_0$ to the base point. If we take $b_0$ to be the base point of $B$, then $\Phi_{(\Uc\ra\Xc)}$ will be a map of pointed sets. 
\end{proof}

\subsubsection{The Fiberwise Obstruction Class}
In Theorem \ref{rg784gf794hf98hf0j3fj34444} we found that for analytic, $gtm$-families over a connected, Stein base, the fiber-wise splitting type remains essentially constant. That is, outside the zero locus of its characteristic section, the fiber-wise splitting type is constant. In what follows we will argue that this will also hold for the obstruction class to splitting the fiber itself.   

\begin{THM}\label{fbjhvvbennciuebcuecoe}
To any analytic, $gtm$-family over a connected Stein base of splitting type $j^\p$, the obstruction class to splitting the fiber of the family, outside the zero locus of its characteristic section, can be identified with some fixed class in the $j^\p$-th obstruction space of the fiber-model. 
\end{THM}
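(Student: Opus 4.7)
The plan is to extract the theorem essentially directly from Proposition \ref{rh973hf983hf80jf093fj}, using Lemma \ref{rfg78gf793hf98h3f03} to convert ``proportional to $\om$'' into ``identified with $\om$''. First I would recall, from Definition \ref{rufh9fh38f039jf3}, that the splitting type differential of $(\Uc\ra\Xc\ra B)$ is the composition $\om\circ\Phi_{(\Uc\ra\Xc)}$; by construction its value at $t\in B$ is precisely the $j^\p$-th obstruction class to splitting the fiber $(\Uc_t\ra\Xc_t)$, viewed inside $H^1\big(X, \Ac^{(j^\p)}_{T^*_{X, -}}\big)$. By Theorem \ref{rg784gf794hf98hf0j3fj34444}, the fiber splitting type is constantly $j^\p$ away from the zero locus $Z = (s = 0)$ of the characteristic section, so all the classes to be compared actually live in one single obstruction space.

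Second, I would invoke Proposition \ref{rh973hf983hf80jf093fj} to write the splitting type differential as $t\mapsto s(t)\otimes\om$ for some fixed $\om\in H^1\big(X, \Ac^{(j^\p)}_{T^*_{X, -}}\big)$ and $s\in H^0(B, \Oc_B)$. Applying the restriction-to-fiber map $i_t^*$ from the compatibility diagram \eqref{fvnkbvhbrybvunienvimepo} together with the K\"unneth identification in Lemma \ref{fnkbvhjbvjhdkndkln}, the tensor $s\otimes\om$ collapses at $t$ to the scalar multiple $s(t)\cdot \om$. Thus for every $t\notin Z$ the fiber-wise obstruction class is a nonzero scalar multiple of the single, $t$-independent class $\om$.

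Third, I would upgrade ``proportional to $\om$'' to ``identified with $\om$'' using the $\Cbb^\times$-equivariance of Lemma \ref{rfg78gf793hf98h3f03}: the $\Cbb^\times$-action rescales a class in $H^1\big(X, \Ac^{(j^\p)}_{T^*_{X, -}}\big)$ by $\lam^{j^\p}$ if $j^\p$ is even and by $\lam^{j^\p - 1}$ if $j^\p$ is odd. Since $j^\p \geq 2$ by our convention (Definition \ref{furbfbiuff3f03jf3jf}) and $s(t)\in \Cbb^\times$ for $t\notin Z$, we can always solve for $\lam\in \Cbb^\times$ giving $\lam\cdot\om = s(t)\cdot \om$. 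Under this rescaling every fiber obstruction at $t\notin Z$ is identified with the fixed class $\om$ and is therefore independent of $t$, which is exactly the assertion of the theorem.

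The main subtlety is the parity bookkeeping in Lemma \ref{rfg78gf793hf98h3f03}, and verifying that $\om$ genuinely sits in the $j^\p$-th graded piece of the obstruction space rather than in the parasitic $H^0(B, T_B)\otimes H^1(X, \Oc_X)$ summand from Lemma \ref{fnkbvhjbvjhdkndkln}(ii); the latter is already secured by the $\Cbb^\times$-weight argument invoked to prove Proposition \ref{rh973hf983hf80jf093fj}, since that summand carries trivial weight and so cannot house a class of nontrivial $j^\p$-th scaling type. No further cohomological input beyond the Stein/K\"unneth machinery established earlier in the section should be required.
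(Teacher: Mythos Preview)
Your proposal is correct and follows essentially the same approach as the paper: invoke Proposition \ref{rh973hf983hf80jf093fj} to write the fiber-wise obstruction as $s(t)\cdot\om$, then use the $\Cbb^\times$-action from Lemma \ref{rfg78gf793hf98h3f03} to normalise $s(t)\om$ to $\om$ for each $t$ with $s(t)\neq 0$. Your additional remarks on the K\"unneth/compatibility-diagram bookkeeping and on the parasitic summand are accurate and slightly more explicit than the paper's account, but the argument is the same.
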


\begin{proof}
Recall the tensor-product decomposition of the obstruction to splitting the total space of an analytic, $gtm$-family $(\Uc\ra\Xc\ra B)$ over a Stein base $B$ in Proposition $\ref{rh973hf983hf80jf093fj}$,
\begin{align}
\om\big(\Uc\ra\Xc) = s\otimes \om.
\label{rfh94h894hf0jf03j0}
\end{align}
Suppose the family has splitting type $j^\p$. Our argument lies in showing that the fiber of the family over any point $t$ such that $s(t)\neq0$ can be `normalised' so that its obstruction to splitting is $\om$. Given \eqref{rfh94h894hf0jf03j0}, fix a point $t\in B$ with $s(t)\neq0$. The obstruction class to splitting the fiber over $t$ is:
\begin{align}
\om\big(\Phi_{(\Uc\ra\Xc)}(t)\big) = s(t)\om.
\label{rbyf874gf9h8f0j309fj3}
\end{align}
Now, $s(t)\in \Cbb$ is some non-zero, complex number. Recall from Lemma \ref{rfg78gf793hf98h3f03} that we have an action of $\Cbb^\times$ on the obstruction space of a model, and that comes from an action on the set $\mbox{\v H}^1\big(X, \Gc_{T^*_{X, -}}^{(j^\p)}\big)$. Now $\mbox{\v H}^1\big(X, \Gc_{T^*_{X, -}}^{(j^\p)}\big)$ comprises supermanifolds of splitting type $j^\p$. If $\star$ denotes the $\Cbb^\times$-action on this set then: 
\begin{align}
\begin{array}{l}
\mbox{given any two supermanifolds $x, x^\p\in \mbox{\v H}^1\big(X, \Gc_{T^*_{X, -}}^{(j^\p)}\big)$, if $x^\p = \lam\star x$ for}
\\
\mbox{some $\lam\in \Cbb^\times$, then $x$ and $x^\p$ are isomorphic as supermanifolds.}
\end{array}
\label{rfh78gf87hf983hf0j390fj3jf93}
\end{align}
We return now to our proof of Theorem \ref{fbjhvvbennciuebcuecoe}. Let $\lam\in \Cbb^\times$ be such that,\footnote{c.f., footnote \eqref{djbcevbuivbebvbebvebveo}} 
\[
\lam^{j^\p - \frac{1}{2}(1 - (-1)^{j^\p})} = s(t).
\]
% if $j^\p$ is even or $\lam^{j^\p-1} = s(t)$ if $j^\p$ is odd. 
Such a number $\lam$ will always exist since $\Cbb$ is algebraically closed. Then by Lemma \ref{rfg78gf793hf98h3f03} and \eqref{rbyf874gf9h8f0j309fj3} the action $\Phi_{(\Uc\ra\Xc)}(t) \mapsto \lam^{-1}\star \Phi_{(\Uc\ra\Xc)}(t)$ sends $s(t)\om \mapsto\om$. Hence, the obstruction class to splitting the fiber of $(\Uc\ra\Xc\ra B)$ over $t$ will be $\om$. 
\end{proof}

\begin{REM}\label{fjbvjvhjbvkeboennoe}
\emph{In Theorem \ref{fbjhvvbennciuebcuecoe} we saw that the obstruction class to splitting the fiber of an analytic, $gtm$-family over a connected, Stein base will not vary with the parameters on base. We remark here that this does \emph{not necessarily} imply that the fibers of the family are all isomorphic as supermanifolds however. Only that they are isomorphic to supermanifolds with a prescribed obstruction class to splitting.}
\end{REM}

\noindent
To preempt a forthcoming illustration we present here a definition, motivated by the above remark.

\begin{DEF}\label{ryurgfyugfiuhfueofioefoeo}
\emph{Let $(\Uc\ra\Xc\ra B)$ be a $gtm$-family over a Stein base. Such a family is said to be \emph{generically isotrivial} if, outside the zero locus of its characteristic section $s$, each fiber is isomorphic as supermanifolds, i.e., that $(\Uc\ra\Xc)_t\cong (\Uc\ra\Xc)_{t^\p}$ for all $t, t^\p$ such that $s(t)\neq0$ and $s(t^\p)\neq0$.}
\end{DEF}

\subsubsection{The Compatibility Diagram}
We conclude now with the following description resulting from Theorem \ref{fbjhvvbennciuebcuecoe}. To a family of supermanifolds we have an embedding of the fiber into the total space. This requires an embedding of models and so we have a compatibility diagram associated to this embedding (c.f., Definition \ref{rgf79f9h9hohiooihfeofoi}). When we have a $gtm$-family of supermanifolds modelled on $(X, T^*_{X, -})$ over a connected, Stein space $B$, the model for the total space is $(X\times B, p_X^*T^*_{X, -})$. Supposing it has splitting type $j^\p$ we find, over each $t\in B$ the compatibility diagram:
\begin{align}
\xymatrix{
\ar[d] H^1\big(X\times B, \Ac_{T^*_{X, -}; p_X^*T^*_{X, -}}^{(j^\p)}\big) \ar[rr] & & H^1\big(X\times B, \Ac_{p_X^*T^*_{X,-}}^{(j^\p)}\big)\ar[d]
\\
H^1\big(X, \Ac_{T^*_{X, -}}^{(j^\p)}\big)\ar[rr] & & H^1\big(X, \Ac^{(j^\p)}_{p_X^*T^*_{X,-}}|_X\big).
}
\label{jdkvbhjdbvknioionoeun}
\end{align}
Note that while the spaces involved in the above diagram are `independent of $t$', the maps between them are certainly not. We can see this quite clearly in the following illustration with which we conclude this section.

\begin{ILL}\label{h78gf87g9h38hf03f093j}
Let $(\Uc\ra\Xc\ra B)$ be an analytic, $gtm$-family over a connected, Stein base $B$ with splitting type $j^\p$. In this case we then have a projection $H^1\big(X\times B, \Ac_{p_X^*T^*_{X,-}}\big)\ra H^0(B, \Oc_B)\otimes H^1\big(X, \Ac_{T^*_{X, -}}^{(j^\p)}\big)$ from Lemma \ref{fnkbvhjbvjhdkndkln}. At \emph{any} point $t$ we then have the evaluation-at-$t$ map sending this tensor product to $H^1\big(X, \Ac_{T^*_{X, -}}^{(j^\p)}\big)$. And so, by Theorem \ref{fbjhvvbennciuebcuecoe}, we conclude that the compatibility diagram reduces to, for each $t$:
\begin{align*}
\xymatrix{
\ar[d] H^1\big(X\times B, \Ac_{T^*_{X, -}; p_X^*T^*_{X, -}}^{(j^\p)}\big) \ar[r] & H^1\big(X\times B, \Ac_{p_X^*T^*_{X,-}}^{(j^\p)}\big)\ar[r] \ar[d] & H^0(B, \Oc_B)\otimes H^1(X, \Ac_{T^*_{X, -}}^{(j^\p)})\ar[d]^{\mbox{ev}_t}
\\
H^1\big(X, \Ac_{T^*_{X, -}}^{(j^\p)}\big) \ar@/_2pc/@{-->}[rr]_{-\otimes \mbox{ev}_t(s)}  \ar[r] & H^1\big(X, \Ac^{(j^\p)}_{p_X^*T^*_{X,-}}|_X\big)\ar[r] & 
H^1\big(X, \Ac_{T^*_{X, -}}^{(j^\p)}\big)
}
\end{align*}
~\\
where $s$ is the characteristic section for the family $(\Uc\ra\Xc\ra B)$. The horizontal map $H^1\big(X, \Ac^{(j^\p)}_{p_X^*T^*_{X,-}}|_X\big)\ra H^1\big(X, \Ac_{T^*_{X, -}}^{(j^\p)}\big)$ is given by projection upon recalling the relation between $\Ac^{(j^\p)}_{p_X^*T^*_{X,-}}$ and $\Ac^{(j^\p)}_{T^*_{X,-}}$ in \eqref{fbiurbvibiuveuvueibcebuyce}.
\end{ILL}

\section{Illustration: Rothstein's Deformation}
\label{rfg78gf78gf9h38fh3f03}

\noindent 
In \cite{ROTHDEF}, Rothstein gave a construction of a one-dimensional family of complex supermanifolds. We describe this construction here. As we will see, it is an example of an analytic, $gtm$-family over a connected, Stein base $B = \Cbb$. We might view Theorem \ref{rg784gf794hf98hf0j3fj34444} and Theorem \ref{fbjhvvbennciuebcuecoe} as generalisations of Rothstein's construction. Our presentation of this construction will utilise the the viewpoint of families of `glueing data' and so we embark now on the necessary digression.
% from Lemma \ref{ruihfhf89hf83j0fj309j0}.

\subsection{Glueing Data for $gtm$-Families over $\Cbb^m$}
With reference to local coordinates and glueing data, Theorem \ref{rg784gf794hf98hf0j3fj34444} and Theorem \ref{fbjhvvbennciuebcuecoe} can be illustrated explicitly in the case where the connected, Stein base is $B = \Cbb^m$.

\subsubsection{Local Coordinates and Glueing}
The notion of supermanifold adopted in this article was given in Definition \ref{furbfbiuff3f03jf3jf}. In Remark \ref{fknkrnjkvbrvbvnoinio4nfo4f4} it was mentioned how supermanifolds, classically, are certain kinds of locally ringed spaces. While this viewpoint loses information about the splitting type, it caters to the more geometric understanding of supermanifolds as being glued together by model spaces, which are split models. Supermanifolds of splitting type $j^\p$ are then supermanifolds equipped with a covering by split models and transition functions of a particular form. Further details on supermanifolds via glueing can be found in \cite{BETTPHD, BETTOBSTHICK}. We give a brief description of glueing data for families of supermanifolds in what follows.
\\\\
If $(\Uc\ra\Xc\ra \Cbb^m)$ is a \emph{gtm}-family, then $(\Uc\ra\Xc)$ will be a supermanifold modelled on $(X\times \Cbb^m, p_X^*T^*_{X, -})$. Set $\Uc = (\Uc_\al)_{\al\in I}$. Supermanifolds are locally split so we can choose $\Uc$ to be a splitting cover. This means each $\Uc_\al\in \Uc$ will be split as a supermanifold in its own right. With $U_\al = (\Uc_\al)_{\red}\subset X\times \Cbb^m$ and $q = \mathrm{rank}~T^*_{X, -}$ the pullback $p_X^*T^*_{X, -}$ will be locally free and of rank $q$. Hence $\Uc_\al \cong S(U_\al, \wedge^\bt \Oc^{\oplus q}_{U_\al})$ and $(\Uc\ra \Xc)$ realises $\Xc$ as being glued by the split supermanifolds $(S(U_\al, \wedge^\bt \Oc^{\oplus q}_{U_\al}))_{\al\in I}$. Now let $(z|\q)$ denote local coordinates on $\Uc_\al$. Since $\Uc_\al$ is split, $z$ defines local coordinates on $U_\al$. Since $U_\al\subset X\times \Cbb^m$ we can write $z = (x, t)$, where $x$ are local coordinates on $U_\al\cap X$ and $t$ are global coordinates $\Cbb^m$. Hence local coordinates on $\Uc_\al$ are $(z|\q) = (x, t|\q)$. Note that since $t$ is a global coordinate on $\Cbb^m$, if local coordinates over $\Uc_\be$ are $(z^\p|\eta) = (y, t^\p|\eta)$, then over $\Uc_\al\cap\Uc_\be$ we have $t^\p = t$.

We sum up these remarks in the following.

\begin{LEM}\label{ruihfhf89hf83j0fj309j0000}
Let $(\Uc\ra \Xc\ra \Cbb^m)$ be a gtm-family of supermanifolds of splitting type $j^\p$. Suppose $\Uc = (\Uc_\al)_{\al\in I}$ is a splitting cover. Then the transition functions $\vartheta$ for the supermanifold $(\Uc\ra\Xc)$ are of the following form on intersections $\Uc_\al\cap\Uc_\be$,
\begin{align*}
y 
&= \vartheta^+_{\al\be}(x, t|\q) 
\\
&= f_{\al\be}(x) + (j^\p+1 \mod 2)\q^{j^\p} g_{\al\be}(x, t) + \Oc(\q^{j^\p+1})
\\
t^\p &= t
\\
\eta
&=
\vartheta_{\al\be}^-(x, t|\q) 
\\
&=
\zeta_{\al\be}(x)\q + (j^\p\mod 2) \q^{j^\p} h_{\al\be}(x, t) + \Oc(\q^{j^\p+1})
\end{align*}
where $\Oc(\q^{j^\p+1})$ denotes terms of order $\q^{j^\p+1}$ and higher; $g_{\al\be}$ and $h_{\al\be}$ are analytic over $U_\al\cap U_\be$.
\qed
\end{LEM}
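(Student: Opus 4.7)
The plan is to unpack the assertion that the transitions of $(\Uc\ra\Xc)$ lie in $\Gc^{(j^\p)}_{p_X^*T^*_{X,-}}$ in local coordinates, exploiting the product structure of the $gtm$-model. Since $\Uc = (\Uc_\al)_{\al\in I}$ is a splitting cover, each $\Uc_\al\cong S(U_\al, p_X^*T^*_{X,-}|_{U_\al})$ is split, and so admits local coordinates $(x, t|\q)$ as described in the paragraph preceding the lemma. The transitions $\vartheta_{\al\be}$ on $\Uc_{\al\be}$ are, by hypothesis, isomorphisms of split models belonging to $\Gc^{(j^\p)}$.

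First I would dispose of the equality $t^\p = t$. The structure morphism $f:\Xc\ra\Cbb^m$ pulls the global even coordinate $t$ on $\Cbb^m$ back to a global even function on $\Xc$, and by the $gtm$-hypothesis the underlying reduced map $X\times\Cbb^m\ra\Cbb^m$ is the projection, so on reduced parts the transitions preserve $t$; since $\Cbb^m$ carries no odd structure, $t$ also picks up no odd correction. Next I would identify the leading terms $f_{\al\be}$ and $\zeta_{\al\be}$. Because the total space model is globally $(X\times\Cbb^m, p_X^*T^*_{X,-})$, the reduced transitions on $Y = X\times\Cbb^m$ coincide with those of $X$ crossed with the identity on $\Cbb^m$, forcing $f_{\al\be}$ to depend only on $x$. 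Similarly, the transitions of $p_X^*T^*_{X,-}$ are pulled back from $X$, giving $\zeta_{\al\be}(x)$ likewise depending only on $x$.

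The final step is to pin down the order-$\q^{j^\p}$ correction. Here I would invoke the defining property of the subsheaf $\Gc^{(j^\p)}\subset \Gc^{(2)}$: its sections are those automorphisms of the split model whose $\q$-expansion agrees with a split transformation modulo terms of $\q$-degree at least $j^\p$, and the successive quotient $\Gc^{(j^\p)}/\Gc^{(j^\p+1)}$ is the obstruction sheaf $\Ac^{(j^\p)}_{p_X^*T^*_{X,-}}$ of \eqref{fbvkbvyibiufioenievpmpeoe}. A parity count then produces the asserted indicator factors: a $\q^{j^\p}$ contribution to the even coordinate $y$ is itself even only when $j^\p$ is even, giving the prefactor $(j^\p+1\mod 2)$; while a $\q^{j^\p}$ contribution to the odd coordinate $\eta$ is odd only when $j^\p$ is odd, giving the prefactor $(j^\p\mod 2)$. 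The functions $g_{\al\be}$ and $h_{\al\be}$ are otherwise unconstrained, in particular free to depend on $t$ as well as on $x$.

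The bulk of the argument is bookkeeping. The only technical point is tracking the $\Zbb_2$-grading through the filtration on $\Gc^{(\bt)}$ carefully enough to identify the leading deviation as a section of $\Ac^{(j^\p)}_{p_X^*T^*_{X,-}}$; after that, the claimed expansion reads off directly.
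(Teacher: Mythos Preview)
Your proposal is correct and, in fact, supplies more detail than the paper does: the lemma in the paper is stated with an immediate \qed, with the preceding paragraph on local coordinates (establishing $(z|\q)=(x,t|\q)$ and $t^\p=t$) serving as the entire justification. Your unpacking of the filtration $\Gc^{(j^\p)}$ and the parity count for the indicator factors is exactly the bookkeeping the paper leaves implicit.
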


\subsubsection{The Splitting Type Differential}
As detailed in \cite{YMAN, BETTPHD, BETTOBSTHICK}, from glueing data for supermanifolds we can readily recover cocycle expressions for the obstruction classes to splitting. And so for a $gtm$-family $(\Uc\ra \Xc\ra B)$ of splitting type $j^\p$, given by glueing data $\vartheta =(\vartheta_{\al\be})$ as in Lemma \ref{ruihfhf89hf83j0fj309j0000}, a cocycle representative for its obstruction class to splitting on $\Uc_\al\cap\Uc_\be$ is:
\begin{align*}
\om_{\al\be}\big(\Uc\ra\Xc\big)
&=
\frac{1}{j^\p!}\q\frac{\pt \vartheta_{\al\be}(z|\q)}{\pt \q}\otimes 
\left((j^\p+1\mod 2)\frac{\pt}{\pt z^\p} + (j^\p\mod 2) \frac{\pt}{\pt \eta}
\right)
\notag
\\
&=
(j^\p+1\mod 2) g_{\al\be}(x, t)\q^{j^\p}~\frac{\pt}{\pt y}
+
(j^\p\mod 2)
h_{\al\be}(x, t)\q^{j^\p}~\frac{\pt}{\pt \eta}
\end{align*}
where recall that $z = (x, t)$ and $z^\p = (y, t)$. Evidently, the splitting type differential is given by 
\begin{align}
t
\longmapsto 
\left[\left\{
(j^\p+1\mod 2) g_{\al\be}(x, t)\q^{j^\p}~\frac{\pt}{\pt y}
+
(j^\p\mod 2)
h_{\al\be}(x, t)\q^{j^\p}~\frac{\pt}{\pt \eta}
\right\}
\right].
\label{hf784f97hf983fj393jddd}
\end{align}
If $(\Uc\ra\Xc\ra B)$ is an \emph{analytic family} in the sense of Definition \ref{rfh489f98hfj3039k} then the mapping in \eqref{hf784f97hf983fj393jddd} is analytic. This amounts to either $g_{\al\be}(x, t)$ or $h_{\al\be}(x, t)$ being analytic in $t$ over the intersection $U_\al\cap U_\be\cap \Cbb^m$.

\subsubsection{Illustration of Theorem \ref{rg784gf794hf98hf0j3fj34444} and Theorem \ref{fbjhvvbennciuebcuecoe}} From Lemma \ref{ruihfhf89hf83j0fj309j0000} we see that $\pt t^\p/\pt \q = 0$ and so there is no $\pt/\pt t$-component in \eqref{hf784f97hf983fj393jddd}. Hence the obstruction to splitting $(\Uc\ra\Xc)$ will lie in the Lemma \ref{fnkbvhjbvjhdkndkln}\emph{(i)}-component, as expected from Proposition \ref{rh973hf983hf80jf093fj}. This proposition  moreover asserts that the coefficient functions $g_{\al\be}(x, t)$ and $h_{\al\be}(x, t)$ in \eqref{hf784f97hf983fj393jddd} can be written: $g_{\al\be}(x, t) = s(t)g^\p_{\al\be}(x)$ and $h_{\al\be}(x, t) = s(t)h^\p_{\al\be}(x)$ over $\Uc_\al\cap \Uc_\be$ and some global section $s\in H^0(B, \Oc_B)$, referred to as the `characteristic section' in Definition \ref{rfgf794hf984hf84hf08h40}. Theorem \ref{rg784gf794hf98hf0j3fj34444} then asserts that the higher order components of the transition data in Lemma \ref{ruihfhf89hf83j0fj309j0000}, i.e., the terms in $\Oc(\q^{j^\p+1})$, will be proportional to $s(t)$. In particular, they will vanish when $s$ vanishes. Finally, Theorem \ref{fbjhvvbennciuebcuecoe} asserts that the obstruction class to splitting the fiber over any point $t\in \Cbb^m$ such that $s(t)\neq0$ can be identified with the class:
\[
\left[\left\{
(j^\p+1\mod 2) g_{\al\be}^\p(x)\q^{j^\p}~\frac{\pt}{\pt y}
+
(j^\p\mod 2)
h_{\al\be}^\p(x)\q^{j^\p}~\frac{\pt}{\pt \eta}
\right\}
\right]
\in
H^1\big(X, \Ac^{(j^\p)}_{T^*_{X, -}}\big)
\]
where $(X, T^*_{X, -})$ is the fiber model of the family $(\Uc\ra\Xc\ra\Cbb^m)$. We now present a construction of a family of generically isotrivial supermanifolds, following Rothstein in \cite{ROTHDEF}.

\subsection{Rothstein's Construction}
Fix a supermanifold $(\Ufr\ra \Xfr)$ of splitting type $j$. We have transition functions $\rho$ which, over any non-empty intersection $U_\al\cap U_\be$, is written:
\begin{align}
\rho_{\al\be}(x|\q) = f_{\al\be}(x) + \zeta_{\al\be}(x)\q + h_{\al\be}(x)\q^j + \Oc(\q^{j+1})
\label{rhf894hf98h8f30f3}
\end{align}
where $\zeta_{\al\be}(x)\q = \sum_{q =1}^q\zeta_{\al\be}^a(x)\q_a$; $h_{\al\be}(x)\q^j = \sum_{|I|=j} h^I_{\al\be}(x)\q_I$ for $I$ a multi-index; and where $\Oc(\q^{j+1})$ denotes terms proportional to $\q^\ell$ for $\ell>j$. Note that $\rho$ in \eqref{rhf894hf98h8f30f3} ought to be thought of as glueing the open set $\Ufr_{\al\be}\subset \Ufr_\al$ to $\Ufr_{\be\al}\subset \Ufr_\be$. The variables $(x|\q)$ define an even resp. odd coordinate system on $\Ufr_\al$. Now for each $t\in \Cbb$ consider the map,
\begin{align*}
x\mapsto x&&\q\mapsto \q&&\mbox{and}&&\q^j \mapsto t^j\q^j.
\end{align*}
This defines an automorphism of $\Uc_\al = \Ufr_\al\times \Cbb$. Consider transition functions modifying those in \eqref{rhf894hf98h8f30f3}:
\begin{align}
\vartheta_{\al\be}(x, t|\q)
=
f_{\al\be}(x) + \zeta_{\al\be}(x)\q + t^jh_{\al\be}(x)\q^j + \Oc(t^{j+1}\q^{j+1})
\label{rgf874gf87hf98h3083j03}
\end{align}
thereby glueing $\Ufr_{\al\be}\times \Cbb\subset \Uc_\al$ to $\Ufr_{\be\al}\times \Cbb \subset \Uc_\be$. That the cocycle condition for $\vartheta = (\rho_{\al\be}(x, t|\q))_{\al,\be}$ will be satisfied on triple intersections $\Ufr_\al\cap\Ufr_\be\cap\Ufr_\gam$ will follow from the fact that the cocycle condition will be satisfied by $\rho$ in \eqref{rhf894hf98h8f30f3} by assumption. Hence $\big(\Uc, \vartheta =  (\rho_{\al\be}(x, t|\q))_{\al,\be}\big)$ will glue to define a supermanifold $\Xc$. The projection $\Ufr\ra \Cbb$ given by $(x, t|\q) \mapsto t$ is globally well defined and defines a family $(\Uc \ra\Xc\ra \Cbb)$, where $\Uc = (\Uc_\al)$ and $\Uc_\al = \Ufr_\al\times \Cbb$.  Clearly $\Xc_\red = X\times \Cbb$ and $T_{\Xc_\red, -}^* = p_X^*T_{X, -}^*$ is the locally free sheaf with transition functions $(\zeta_{\al\be})$. Hence $(\Uc \ra\Xc\ra \Cbb)$ is a $gtm$-family. Furthermore, from \eqref{rgf874gf87hf98h3083j03} we see that the splitting type differential of the family is:
\[
\Phi_{(\Uc\ra\Xc)}(t) = t^j\om
\]
where $\om$ is the obstruction to splitting $(\Ufr\ra\Xfr)$. From the above expression we see that $\Phi_{(\Uc\ra\Xc)}$ is analytic and so $(\Uc \ra\Xc\ra \Cbb)$ is an analytic, $gtm$-family over the connected, Stein base $B =\Cbb$. In writing $\Cbb = \mathrm{Spec}~\Cbb[t]$, for $t$ a global coordinate on $\Cbb$, the characteristic section for $(\Uc\ra\Xc\ra \Cbb)$ is given by $s: t\mapsto t^j$. Furthermore, since $t = 0$ is a zero of $s$, this family will be weakly centrally split over the pointed base $(\Cbb, 0)$ by Proposition \ref{rviehv89h03j9jv09393}. We can also directly see this in \eqref{rgf874gf87hf98h3083j03} upon setting $t =0$. Moreover, our original supermanifold $(\Ufr\ra\Xfr)$ is recovered from $(\Uc\ra\Xc\ra \Cbb)$ as the fiber over $t= 1$.

\begin{DEF}\label{rgf874g7fh9f8h3fhf803}
\emph{The analytic, $gtm$-family $(\Uc\ra\Xc\ra \Cbb)$ constructed above from a given supermanifold $(\Ufr\ra\Xfr)$ will be referred to as the \emph{Rothstein family associated to $(\Ufr\ra\Xfr)$.}}
\end{DEF}

\begin{REM}
\emph{Rothstein in \cite{ROTHDEF} referred to the `Rothstein family' (in the sense of Definition \ref{rgf874g7fh9f8h3fhf803}) as a deformation of the split model. We observe that this description is compatible with the definition of `splitting type deformations' given in Definition \ref{fvuirviurvieiojowjwo}. That is, the Rothstein family of a supermanifold might also be viewed as \emph a splitting type deformation of the split model.}
\end{REM}

\subsection{Generic Isotriviality}
Implicit in Rothstein's work in \cite{ROTHDEF} is that the Rothstein family of a supermanifold constructed above will be generically isotrivial, in the sense of Definition \ref{ryurgfyugfiuhfueofioefoeo}. We present an argument of this below.

\begin{THM}\label{rbfiugf97hf98h30f093f03}
The Rothstein family of any supermanifold is generically isotrivial.
\end{THM}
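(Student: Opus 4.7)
The plan is to show that the fiber $(\Uc_t \ra \Xc_t)$ of the Rothstein family over any $t \in \Cbb\setminus\{0\}$ is isomorphic as a supermanifold to the distinguished fiber $(\Ufr \ra \Xfr)$, recovered at $t = 1$. Since the characteristic section is $s(t) = t^j$ with zero locus the origin, transitivity then delivers $\Xc_t \cong \Xc_{t^\p}$ for all $t, t^\p$ with $s(t), s(t^\p) \neq 0$, which is precisely the generic isotriviality asserted in Definition \ref{ryurgfyugfiuhfueofioefoeo}.

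The core idea is to identify the fiber $(\Uc_t \ra \Xc_t)$ with the image of $(\Ufr \ra \Xfr)$ under the $\Cbb^\times$-action of Lemma \ref{rfg78gf793hf98h3f03} at an appropriate scale $\lam(t)\in \Cbb^\times$. Specialising \eqref{rgf874gf87hf98h3083j03} at fixed $t$ yields the fiber cocycle $\rho^{(t)}_{\al\be}(x|\q) = f_{\al\be}(x) + \zeta_{\al\be}(x)\q + t^j h_{\al\be}(x)\q^j + \Oc(t^{j+1}\q^{j+1})$. On each chart $\Ufr_\al$, I would realise the intertwiner as a local odd rescaling $\sigma_{\lam(t)}\colon (x|\q) \mapsto (x|\lam(t)\q)$ and verify that the conjugation $\sigma_{\lam(t)} \circ \rho_{\al\be} \circ \sigma_{\lam(t)}^{-1}$ matches $\rho^{(t)}_{\al\be}$ term by term, with $\lam(t)$ determined by the parity of $j$ through the scaling rule \eqref{rb783gf87g3f938hf9h3}: for $j$ even one takes $\lam(t) = t$, and for $j$ odd one takes $\lam(t)$ to satisfy $\lam^{j-1} = t^j$, which is solvable since $\Cbb$ is algebraically closed. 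Equivalently, once the fiber cocycle is seen to lie in the $\Cbb^\times$-orbit of $\rho_{\al\be}$, one may invoke \eqref{rfh78gf87hf983hf0j390fj3jf93} -- which identifies $\Cbb^\times$-orbits with isomorphism classes of supermanifolds -- to deduce $\Xc_t \cong \Xfr$ without further cocycle calculation.

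The main technical obstacle is checking consistency of the higher-order terms denoted $\Oc(t^{j+1}\q^{j+1})$ in \eqref{rgf874gf87hf98h3083j03} with a single, fixed rescaling $\lam(t)$ across all filtered orders $k\geq j+1$, using that even-degree coefficients scale as $\lam^k$ while odd-degree coefficients scale as $\lam^{k-1}$ under conjugation by $\sigma_{\lam(t)}$. For $j$ even, the even-order contributions visibly scale as $t^k$; for $j$ odd, the compatibility of the odd-order contributions with the prescribed $\lam(t)$ must be verified by induction on the filtration level, exploiting the fact that Rothstein's construction is engineered, order by order, to reproduce conjugation by a single local rescaling. Once this compatibility is established the chart-wise rescalings assemble into an isomorphism $(\Ufr \ra \Xfr) \stackrel{\cong}{\ra} (\Uc_t \ra \Xc_t)$, completing the proof.
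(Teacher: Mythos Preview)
Your approach is the same as the paper's---use the $\Cbb^\times$-action from Lemma \ref{rfg78gf793hf98h3f03} and invoke \eqref{rfh78gf87hf983hf0j390fj3jf93}---but you have introduced an unnecessary and ultimately unworkable detour. The paper simply observes that the Rothstein construction \emph{is} the $\star$-action: writing $\rho = (\rho^+|\rho^-)$ and defining $(t\star\rho)_{\al\be}(x|\q) = \big(\rho^+_{\al\be}(x|t\q),\, t\,\rho^-_{\al\be}(x|t\q)\big)$, one recovers \eqref{rgf874gf87hf98h3083j03} on expansion. Hence the fiber over $t$ is literally $t\star(\Ufr\ra\Xfr)$, and for $t\neq0$ this is isomorphic to $(\Ufr\ra\Xfr)$ by \eqref{rfh78gf87hf983hf0j390fj3jf93}. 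No higher-order verification is needed; it is the definition.

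Your complication arises from choosing $\lam(t)$ to match the scaling law \eqref{rb783gf87g3f938hf9h3} on the obstruction class rather than reading it off from the construction itself. This leads you to a parity-dependent $\lam(t)$ (in particular $\lam^{j-1}=t^j$ for $j$ odd), which then fails to match the higher-order terms under a single rescaling---precisely the ``main technical obstacle'' you flag. That obstacle is artificial: with $\lam(t)=t$ uniformly, the conjugation $\sigma_t\circ\rho\circ\sigma_t^{-1}$ reproduces the Rothstein cocycle at \emph{every} order by design, since both the even-degree terms in $\rho^+$ and the odd-degree terms in $\rho^-$ scale exactly as prescribed by the $\star$-action. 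You already say this in your final sentence (``Rothstein's construction is engineered, order by order, to reproduce conjugation by a single local rescaling''); making that the starting point rather than the conclusion dissolves the obstacle and brings your argument in line with the paper's.
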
 

\begin{proof}
In Lemma \ref{rfg78gf793hf98h3f03} and \eqref{rfh78gf87hf983hf0j390fj3jf93}, we established that the multiplicative group $\Cbb^\times$ acts as automorphisms of supermanifolds with prescribed splitting type. Now fix a supermanifold $(\Ufr\ra\Xfr)$ of splitting type $j$ and suppose its transitions functions are given by \eqref{rhf894hf98h8f30f3}. Write $\rho = (\rho^+| \rho^-)$, where the subscripts indicate that only monomials in the odd variables of even $`+'$, resp. odd `$-$' degree are included. From Lemma \ref{ruihfhf89hf83j0fj309j0000} we see that $\rho^+$ resp. $\rho^-$ glue the even resp. odd coordinates, i.e., on $U_\al\cap U_\be$ that $(y|\eta) = (\rho_{\al\be}^+(x|\q)| \rho_{\al\be}^-(x|\q))$. On supposing $(\Ufr\ra\Xfr)$ is modelled on $(X, T^*_{X, -})$, the transition functions $\rho =(\rho^+|\rho^-)$ define a class in $\mbox{\v H}^1\big(X, \Gc_{T^*_{X, -}}^{(j)}\big)$. The action by $\Cbb^\times$, denoted $\star$, is given by:
 \begin{align}
(t\star \rho)_{\al\be}(x|\q)
=
\left(\begin{array}{l}
\rho^+_{\al\be}(x|t\q)
\\
t\rho^-_{\al\be}(x|t\q)
\end{array}
\right).
\label{fnbvuirbviunvoenioeop}
\end{align}
On expanding the right-hand side of \eqref{fnbvuirbviunvoenioeop}, we will recover the glueing data in \eqref{rgf874gf87hf98h3083j03}. This allows us to conclude that the fiber of the Rothstein family $(\Uc\ra\Xc\ra \Cbb)$ associated to $(\Ufr\ra\Xfr)$ over $t$ is simply $t\star (\Ufr\ra\Xfr)$. Hence, outside the zero locus of the characteristic section, i.e., for $t\neq0$, the fibers will all be isomorphic to $(\Ufr\ra\Xfr)$ (c.f., \eqref{rfh78gf87hf983hf0j390fj3jf93}). As such the Rothstein family will be generically isotrivial. 
\end{proof}

\subsection{Glueing the Rothstein Family over $\Pbb^1_\Cbb$}

\subsubsection{Preliminaries}
Let $B$ be a space with covering $(B_i)_{i\in I}$. If we are given a collection of families over $B_i$, say $(Z_i\ra B_i)_{i\in I}$, it a natural question to ask when they might glue to define some space $(Z\ra B)$. The resolution to such questions motivate the theory of stacks and descent, a reference to which is \cite{VIST}. For our purposes we will only need the following preliminary result which shows that the category of continuous functions forms a stack over topological spaces. From \cite[Proposition 4.1, pp. 70-1]{VIST} we have:

\begin{PROP}\label{fjbjvurbvuniovneinve}
Fix a space $B$ and a covering of $B$ by spaces $(B_i)_{i\in I}$ and let $(Z_i\ra B_i)_{i\in I}$ be a collection of families over $B_i$. Then there will exist a space $(Z\ra B)$ over $B$ and an isomorphism $(Z\ra B)|_{U_i} \cong (Z_i\ra B_i)$ for each $i$ if the following conditions hold:
\begin{enumerate}[(i)]
	\item on each non-empty intersection $B_i\cap B_j$ there exists an isomorphism $\vartheta_{ij} : Z_i|_{B_i\cap B_j} \stackrel{\cong}{\ra} Z_j|_{B_i\cap B_j}$ over $B_i\cap B_j$ with $\vartheta_{ii} = {\bf 1}$ for all $i$ and;
	\item on every non-empty, triple intersection $B_i\cap B_j \cap B_k$ we have a commutative diagram over $B_i\cap B_j\cap B_k$,
	\[
	\xymatrix{
	\ar[dr]_{\vartheta_{ik}} Z_i|_{B_i\cap B_j \cap B_k} \ar[rr]^{\vartheta_{ij}} & & Z_j|_{B_i\cap B_j \cap B_k}\ar[dl]^{\vartheta_{jk}}
	\\
	& Z_k|_{B_i\cap B_j \cap B_k}
	}
	\]
\end{enumerate}
\qed
\end{PROP}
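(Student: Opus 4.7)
The plan is to realise $Z$ as a quotient of the disjoint union $\tilde Z = \bigsqcup_{i \in I} Z_i$. First I would equip $\tilde Z$ with the projection $\tilde \pi : \tilde Z \ra B$ induced by the given maps $Z_i \ra B_i$ composed with the inclusion $B_i \hookrightarrow B$. Next I would introduce the binary relation $\sim$ on $\tilde Z$ declaring $z \in Z_i$ and $z' \in Z_j$ equivalent whenever $\tilde \pi(z) \in B_i \cap B_j$ and $\vartheta_{ij}(z) = z'$. The key observation is that hypotheses (i) and (ii) are precisely what is needed to promote $\sim$ to an equivalence relation: reflexivity is the condition $\vartheta_{ii} = {\bf 1}$; symmetry follows from (ii) applied with $k = i$, giving $\vartheta_{ji} \circ \vartheta_{ij} = \vartheta_{ii} = {\bf 1}$ and hence $\vartheta_{ji} = \vartheta_{ij}^{-1}$; transitivity is a direct restatement of (ii).

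I would then set $Z := \tilde Z / \sim$ equipped with the quotient topology, observing that $\tilde \pi$ descends to a map $\pi : Z \ra B$ because equivalent points of $\tilde Z$ have the same image in $B$. Composing the inclusion $Z_i \hookrightarrow \tilde Z$ with the quotient projection yields, for each $i$, a continuous map $\iota_i : Z_i \ra \pi^{-1}(B_i)$. Surjectivity of $\iota_i$ follows from the fact that any point in $Z_j$ lying over $B_i \cap B_j$ is identified with its image under $\vartheta_{ji}$, which lies in $Z_i$. Injectivity, together with the fact that $\iota_i$ is open onto its image, follows from $\vartheta_{ii} = {\bf 1}$ and from the cover $(B_i)$ being open; together these yield the desired isomorphism $(Z \ra B)|_{B_i} \cong (Z_i \ra B_i)$.

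The remaining point requiring attention is to verify that $Z$ inherits whatever structure (analytic, complex, supermanifold, etc.) the $Z_i$ carry. Since the $\vartheta_{ij}$ are by assumption isomorphisms in the ambient category and since the family structure is local on the base, this transport is standard: one declares a function on an open subset of $Z$ to be of the requisite type precisely when its pullback to each $\iota_i^{-1}$-preimage is. The substantive content of the proposition is really the gluing/descent axiom of a stack, and the hard part — to the extent there is one — is the verification that every such cocycle is effective for the category of spaces in question. In the topological and complex-analytic settings this is automatic; in the supermanifold setting one appeals to locally-ringed-space gluing.
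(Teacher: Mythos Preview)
Your argument is correct and is the standard gluing construction. Note, however, that the paper does not supply its own proof of this proposition: it is stated with a \qed and attributed to \cite[Proposition~4.1, pp.~70--1]{VIST}, where the argument you have written out is essentially the one given. So there is nothing to compare against beyond observing that you have reconstructed the cited proof accurately.
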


\begin{REM}\label{fjbcjevcyuevcibeocneo}
\emph{In the case where the base $B$ in Proposition \ref{fjbjvurbvuniovneinve} is one-dimensional there will exist coverings with no non-trivial, triple intersections. Over such coverings, only Proposition \ref{fjbjvurbvuniovneinve}\emph{(i)} will be relevant to check to ensure the existence of a space over $B$ from the datum of local families $(Z_i\ra B_i)_{i\in I}$.}
\end{REM}

\subsubsection{Glueing the Rothstein Family} Following Proposition \ref{fjbjvurbvuniovneinve} we give the following definition.

\begin{DEF}\label{rfh479gh94hg984hg80gh044}
\emph{Let $B$ be given with an open covering $(B_i)_{i\in I}$ and suppose we have a collection of families $(X_i\ra B_i)_{i\in I}$ of spaces or superspaces. If there exist functions $(\vartheta_{ij})_{i, j\in I}$ satisfying Proposition \ref{fjbjvurbvuniovneinve}\emph{(i)} and \ref{fjbjvurbvuniovneinve}\emph{(ii)}, then we say $(X_i\ra B_i)_{i\in I}$ will \emph{glue to define a space over $B$}.}
\end{DEF}

\noindent
Our objective is now to prove the following:

\begin{THM}\label{rf8g874g974h98fh8fj309j}
Let $B = \Pbb^1_\Cbb$ be equipped with its standard covering $\{U_1, U_2\}$. Then for any supermanifold $x = (\Ufr\ra\Xfr)$, its Rothstein family $(R_i(x)\ra U_i)_{i = 1, 2}$ will glue to define a space over $\Pbb^1_\Cbb$.
\end{THM}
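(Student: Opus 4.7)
The plan is to verify the hypotheses of Proposition \ref{fjbjvurbvuniovneinve} for the standard cover $\{U_1, U_2\}$ of $\Pbb^1_\Cbb$, where $U_i \cong \Cbb$ with coordinates $t_1$ and $t_2 = t_1^{-1}$ on the overlap $U_1\cap U_2 \cong \Cbb^\times$. Since the cover has only two members, every triple intersection is empty, and by Remark \ref{fjbcjevcyuevcibeocneo} condition (ii) is automatic. It therefore suffices to exhibit an isomorphism $\vartheta_{12}\colon R_1(x)|_{U_1\cap U_2} \stackrel{\cong}{\to} R_2(x)|_{U_1\cap U_2}$ over $\Cbb^\times$.

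The construction would proceed by exploiting generic isotriviality. By Theorem \ref{rbfiugf97hf98h30f093f03}, the Rothstein family $R_i(x)$ has fiber $t_i \star x$ over each $t_i \in \Cbb^\times$, where $\star$ is the $\Cbb^\times$-action of Lemma \ref{rfg78gf793hf98h3f03} given explicitly by \eqref{fnbvuirbviunvoenioeop}. Consequently, both $R_1(x)|_{U_1\cap U_2}$ and $R_2(x)|_{U_1\cap U_2}$ are isotrivial families over $\Cbb^\times$ with typical fiber $x = (\Ufr\ra\Xfr)$, parametrised by reciprocal coordinates. Working locally on charts $\Ufr_\al \times (U_1\cap U_2)$, I would define $\vartheta_{12}$ by combining the base-coordinate change $t_2 = t_1^{-1}$ with an odd-coordinate transformation $\q \mapsto c(t_1)\q + \Oc(\q^2)$, where the leading coefficient $c(t_1)$ and the higher-order corrections encode a coboundary, rational in $t_1$, realising the cohomological equivalence $t_1\star x \sim t_1^{-1}\star x$ from \eqref{rfh78gf87hf983hf0j390fj3jf93}.

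The main obstacle is to produce such a coboundary explicitly and to verify that it intertwines the Rothstein transition data of $R_1(x)$ and $R_2(x)$ on each double intersection $\Ufr_\al\cap\Ufr_\be\times(U_1\cap U_2)$. Thanks to the polynomial form of the $\star$-action in \eqref{fnbvuirbviunvoenioeop} and the $\Cbb^\times$-equivariance of the obstruction filtration (Lemma \ref{rfg78gf793hf98h3f03}), this verification can be carried out order-by-order in the $\q$-filtration of the structure sheaf; at each filtered level the obstruction to matching reduces to a cohomological vanishing already guaranteed by the orbit principle \eqref{rfh78gf87hf983hf0j390fj3jf93}. Because $t_1 \in \Cbb^\times$ is globally invertible on the overlap, all the required rational expressions in $t_1$ are regular, so the resulting $\vartheta_{12}$ is well-defined on all of $U_1\cap U_2$. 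Once this is in place, Proposition \ref{fjbjvurbvuniovneinve} delivers a space over $\Pbb^1_\Cbb$ whose restriction to each $U_i$ recovers $R_i(x)$, completing the proof.
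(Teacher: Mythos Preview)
Your approach is the same as the paper's: reduce to Proposition \ref{fjbjvurbvuniovneinve}(i) via Remark \ref{fjbcjevcyuevcibeocneo}, then use generic isotriviality (Theorem \ref{rbfiugf97hf98h30f093f03}) to produce $\vartheta_{12}$ over $\Cbb^\times$. The paper's argument is precisely this, and your proposal would succeed.

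However, you are making the construction of $\vartheta_{12}$ harder than it is. There is no ``main obstacle'' and no need for an order-by-order filtration argument with higher-order corrections. The $\Cbb^\times$-action \eqref{fnbvuirbviunvoenioeop} is literally the effect on transition data of the global coordinate rescaling $\q\mapsto\lambda\q$ on each chart $\Ufr_\al$; this is an honest isomorphism of supermanifolds, not merely a cohomological equivalence requiring a coboundary to be found. Since $(R_1(x))_t = t\star x$ and $(R_2(x))_{t^{-1}} = t^{-1}\star x$, the fiberwise map is just $t^{-2}\star(-)$, i.e.\ $(x,t\,|\,\q)\mapsto(x,t^{-1}\,|\,t^{-2}\q)$ on each $\Ufr_\al\times(U_1\cap U_2)$, which is visibly regular on $\Cbb^\times$ and intertwines the Rothstein transition data exactly. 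The paper writes this down in one line; your $c(t_1)$ is $t_1^{-2}$ and the ``$\Oc(\q^2)$'' corrections are zero.
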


\begin{proof}
The standard covering of $\Pbb^1_\Cbb$ is by open sets $U_1\cup U_2$, where $U_1 = \mathrm{Spec}~\Cbb[t]$, $U_2 = \mathrm{Spec}~\Cbb[t^\p]$ and transition data $t\mapsto t^\p = 1/t$. There are no non-empty, triple intersections so, by Remark \ref{fjbcjevcyuevcibeocneo} and Definition \ref{rfh479gh94hg984hg80gh044}, we only need to confirm Proposition \ref{fjbjvurbvuniovneinve}\emph{(i)}. Fix a supermanifold $x = (\Ufr\ra\Xfr)$. Then over each open set  we can form the Rothstein family associated to $x$ which we denote respectively by $R_1(x) = (\Uc^\p\ra\Xc^\p\ra U_1)$ and $R_2(x) = (\Uc^{\p\p}\ra\Xc^{\p\p}\ra U_2)$. On the intersection $U_1\cap U_2$ we see by generic isotriviality in Theorem \ref{rbfiugf97hf98h30f093f03} that $t\mapsto t^{-2}t = 1/t = t^\p$ will lift to an isomorphism of supermanifolds,
\[
\big(R_1(x)\big)_t
\cong 
\big(R_2(x)\big)_{t^\p} =
\big(t^{-2}\big)\star
 %\big(t^\p/t\big)\star 
 \big(R_1(x)\big)_t.
%(\Uc^\p\ra\Xc^\p\ra U_1)_t \cong (\Uc^{\p\p}\ra\Xc^{\p\p}\ra U_2)_{t^\p}
\]
Hence we have an isomorphism $\vartheta_{12}: R_1(x)|_{U_1\cap U_2}\cong R_2(x)|_{U_1\cap U_2}$ over $U_1\cap U_2$, thereby confirming Proposition \ref{fjbjvurbvuniovneinve}\emph{(i)}. 
\end{proof}

\part{Obstruction Maps}
\setcounter{section}{0}

\section{$gtm$-Families over Superspaces: Preliminary Constructions}
\label{fjbvhvvyuviruriormprmopvmrp}

\subsection{Definitions}

\subsubsection{The $gt$-Model}
To an isotrovial family of complex manifolds $Y\ra B$ with typical fiber $X$ and a family of sheaves $(T^*_{Y, -})_{/T^*_{X, -}}$, we formed the notion of a \emph{globally trivial model} in Definition \ref{rgf784gf398fh30}(iii). This is generalised below to accomodate superspace bases.

\begin{DEF}\label{rfh74gf9hf830fj93jf}
\emph{Let $(Y, T^*_{Y, -})$ be a model and suppose:
\begin{enumerate}[(i)]
	\item $Y= X\times B$ for complex manifolds $X$ and $B$ and;
	\item  that $T^*_{Y, -}$ fits into a short exact sequence
	\[
	0\lra p_B^*T_{B, -}^* \lra T_{Y, -}^* \lra p_X^*T^*_{X, -}\lra0
	\]
	where $T_{B, -}^*$ resp. $T_{X, -}^*$ are locally free sheaves on $B$ resp. $X$.
\end{enumerate}
Then we say the model $(Y, T^*_{Y, -})$ is of \emph{gt}-type \emph{over $(B, T^*_{B, -})$ and with fiber $(X, T^*_{X, -})$}. If the latter two models are understood, then $(Y, T^*_{Y, -})$ will more simply be referred to as being \emph{of $gt$-type} or a \emph{$gt$-model}.}
\end{DEF}

\noindent
From Definition \ref{rfh74gf9hf830fj93jf}(ii) observe that the odd cotangent bundle $T^*_{Y, -}$ of $gt$-model will define an extension class $\Theta(T_{Y, -}^*)\in \mathrm{Ext}_{\Oc_Y}^1\big(p_X^*T^*_{X, -}, p_B^*T^*_{B, -}\big)$.\footnote{We will usually omit reference to the projection maps $p_X$ and $p_B$ for sake of readability.} As we will see, this class will play an important role in the characterisation of $gtm$-families. 

\begin{DEF}
\emph{Let $(Y, T^*_{Y, -})$ be a $gt$-model. The extension class $\Theta(T^*_{Y, -})$ will be referred to as its \emph{class} or abstractly as the \emph{model class}.}
\end{DEF}

\subsubsection{The $gtm$-Family}
Given a family of (super)manifolds $\Xc\ra B$, the inclusion of a point $b: \mathrm{Spec}~\Cbb\ra B$ in the base lies under the inclusion of the fiber over the point, i.e., that we have a diagram of spaces
\[
\xymatrix{
\Xc_b \ar[d]\ar[r] & \Xc\ar[d]
\\
\mathrm{Spec}~\Cbb\ar[r]^b& B
}
\] 
And so the fiber $\Xc_b$ might be viewed as the pullback of $\Xc\ra B$ along $b: \mathrm{Spec}~\Cbb\ra B$. If we consider a superspace base $\widetilde B$ modelled on $(B, T^*_{B, -})$, then we have a natural embedding $i: B\subset \widetilde B$. This embedding forms the basis for the following definition of a $gtm$-family over a superspace base.

\begin{DEF}\label{ckjvbbvjfbvhjbnckld}
\emph{Fix models $(X, T^*_{X, -})$ and $(B, T^*_{B, -})$ and suppose $\widetilde B$ is a supermanifold modelled on $(B, T^*_{B, -})$. A morphism of supermanifolds $(\widetilde \Uc\ra\widetilde\Xc \ra\widetilde B)$ is said to be a \emph{$gtm$-family of supermanifolds modelled on $(X, T^*_{X, -})$ over $\widetilde B$} if:
\begin{enumerate}[(i)]
	\item the model for total space $(\widetilde\Uc\ra\widetilde\Xc)$ is a $gt$-model over $(B, T^*_{B, -})$ with fiber $(X, T^*_{X, -})$ and;
	\item there exists a $gtm$-family of supermanifolds modelled on $(X, T^*_{X, -})$ over $B$, say $(\Uc\ra\Xc\ra B)$, in $(\widetilde\Uc\ra\widetilde\Xc)$ which lies over the natural embedding $B\subset \widetilde B$, i.e., giving rise to a commutative diagram of spaces
	\[
	\xymatrix{
	\Uc \ar[r] \ar[d] & \Xc \ar[r] \ar[d] & B \ar[d]
	\\
	\widetilde \Uc\ar[r] & \widetilde\Xc \ar[r] & \widetilde B.
	}
	\]
\end{enumerate}
The family $(\Uc\ra\Xc\ra B)$ will be referred to as the \emph{underlying family} of $(\widetilde\Uc\ra\widetilde\Xc\ra\widetilde B)$. The family  $(\widetilde\Uc\ra\widetilde\Xc\ra\widetilde B)$ is analytic iff its underlying family is analytic in the sense of Definition \ref{rfh489f98hfj3039k}.}
\end{DEF}

\noindent
Among the main examples in the literature motivating the definitions given so far is that of super Riemann surfaces and their deformations, discussed below.

\begin{EX}\label{reugf78hf98h30f830}
In \cite{BETTSRS}, deformations of super Riemann surfaces over the superspace base $\Cbb^{0|n}$ were studied. If $S(C, T^*_{C -})$ denotes a super Riemann surface and $(\Uc\ra\Xc\ra\Cbb^{0|n})$ a deformation of it, then the total space $(\Uc\ra\Xc)$ will be modelled on $(C, T_{\Xc, -}^*)$, where $T_{\Xc, -}^*\in \mathrm{Ext}^1(T_{C, -}^*, \oplus^n\Oc_C)$. When $n = 1$ it was shown in \cite{BETTPHD, BETTSRS}, following Donagi and Witten in \cite{DW1}, that the class of this model coincides with the obstruction to splitting the total space of the deformation. Now, the reduced space of $\Cbb^{0|n}$ is a point and we have the inclusion $\mathrm{Spec}~\Cbb\ra \Cbb^{0|n}$. Hence the family underlying $(\Uc\ra\Xc\ra\Cbb^{0|n})$ is the original super Riemann surface $S(C, T^*_{C, -})$. Evidently, the deformations of super Riemann surfaces in \cite{BETTSRS} are examples of analytic, $gtm$-families over superspace bases.
\end{EX}

\begin{REM}
\emph{The notion of a $gtm$-family over a superspace being `centrally' or `weakly centrally' split can be adapted from Definitions \ref{rhf94hf98h80fh309fj0} and \ref{tgv784gf7h498fh30jf39j}, which applied to families over reduced bases. If we declare a $gtm$-family over a superpsace to be centrally or weakly centrally split if its underlying family is, then Example \ref{fnvrbvuirnvionieovoemv} will indeed be an example of a weakly centrally split family in this sense.}
\end{REM}

\subsection{Projectability}  
In this article we have restricted our study to families of supermanifolds over a reduced base $B$. If we consider now a projectable base $\widetilde B$, then $\widetilde B$ is a supermanifold which admits a projection $\pi : \widetilde B\ra B$, where $B = \widetilde B_\red$ is the reduced space. There may exist many choices of projection and so $\widetilde B$ is said to be \emph{projected} if it is endowed with one such choice $\pi$. Accordingly, projected superspaces $\widetilde B$ will be denoted $(\widetilde B, \pi)$. If we are given a family of supermanifolds $(\widetilde\Uc\ra \widetilde\Xc \ra \widetilde B)$, it is natural ask whether composition with a projection $\pi: \widetilde B \ra B$ will realise $(\widetilde\Uc\ra\widetilde\Xc)$ as a family of supermanifolds over a reduced base $B$. Now note that we already have family of supermanifolds over $B$ associated to $(\widetilde\Uc\ra \widetilde\Xc \ra \widetilde B)$, referred to in Definition \ref{ckjvbbvjfbvhjbnckld} as the `underlying family'. This leads to the following definition.

\begin{DEF}
\emph{Let $(\widetilde\Uc\ra \widetilde\Xc \ra \widetilde B)$ be a $gtm$-family of supermanifolds, with $\widetilde B$ a projectable superspace. This family is itself said to be \emph{projectable} if there exists a projection $\pi : \widetilde B\ra B$ which lifts to a projection $\widetilde\pi: \widetilde \Xc \ra \Xc$, where $\Xc$ is the total space of the underlying family.}
\end{DEF}

\noindent
We continue our discussion of super Riemann surfaces from Example \ref{reugf78hf98h30f830} in the following, suggesting that families over projectable superspaces need not be projectable. 

\begin{EX}\label{rgf784gf73hf83h0j9f33333}
Let $(\Uc\ra\Xc\ra\Cbb^{0|n})$ be a deformation of a super Riemann surface $S(C, T^*_{C, -})$. We saw in Example \ref{reugf78hf98h30f830} that it is a $gtm$-family of supermanifolds over the superspace base $\Cbb^{0|n}$. When $n = 1$ it was mentioned that the model class of this family will coincide with the obstruction to splitting the total space $(\Uc\ra\Xc)$. Since $\Xc$ is a $(1|2)$-dimensional supermanifold with reduced space $C$, its obstruction to splitting will coincide with its obstruction to projection onto $C$. Now suppose $(\Uc\ra\Xc\ra\Cbb^{0|n})$ is projectable. Then, as is clear from the discussion in Example \ref{reugf78hf98h30f830}, we will have a projection $\Xc \ra S(C, T^*_{C, -})$ and hence, by composition, a projection $\Xc \ra S(C, T^*_{C, -})\ra C$, revealing that $\Xc$ will itself be projectable as a superspace. If the model class of $(\Uc\ra \Xc\ra \Cbb^{0|1})$ is non-trivial however, we know that the total space will not be projectable. Therefore, to conclude, if the model class of this family is non-trivial, the family will not be projectable. 
\end{EX}

\subsection{The Compatibility Diagram}
We turn now to the problem of comparing the obstruction to splitting the total space of a $gtm$-family over a superspace base with that of the fiber. That is, in formulating the compatibility diagram. 

\subsubsection{Obstruction Spaces}
Recall by our definition that a $gtm$-family of supermanifolds over a superspace base $(\widetilde \Uc\ra\widetilde\Xc\ra\widetilde B)$ contains a $gtm$-family over the reduced base $(\Uc\ra\Xc\ra B)$.  Accordingly, we have an embedding of supermanifolds from Definition \ref{ckjvbbvjfbvhjbnckld}(ii), reproduced below for convenience:
\[
\xymatrix{
\Uc \ar[r] \ar[d] & \Xc \ar[r] \ar[d] & B \ar[d]
\\
\widetilde \Uc\ar[r] & \widetilde\Xc \ar[r] & \widetilde B.
}
\]
The $gt$-model for $(\Uc\ra\Xc\ra B)$ is $(Y, p_X^*T^*_{X, -})$ whereas the $gt$-model for $(\widetilde \Uc\ra\widetilde\Xc\ra\widetilde B)$ is $(Y, T^*_{Y, -})$, where $Y = X\times B$. Since we have an embedding of supermanifolds $\Xc\subset \widetilde \Xc$, we therefore have a relation between \emph{their} respective obstruction classes to splitting. From \eqref{fbiurbvibiuveuvueibcebuyce} the obstruction sheaf in degree $j^\p$ for $(Y, p_X^*T^*_{X, -})$ is $p_X^*\Ac_{T^*_{X, -}}^{(j^\p)}\oplus p_B^*T^*_B$ if $j^\p$ is even and $p_X^*\Ac_{T^*_{X, -}}^{(j^\p)}$ is $j^\p$ is odd. Either way, if $B$ is Stein, then by Lemma \ref{fnkbvhjbvjhdkndkln} and Proposition \ref{rh973hf983hf80jf093fj} we may identify the obstruction \emph{space} of $(Y, p_X^*T^*_{X, -})$ in degree $j^\p$ with $H^1\big(Y, p_X^*\Ac^{(j^\p)}_{T^*_{X, -}}\big)$. For $(Y, T^*_{Y, -})$, its obstruction space in degree $j^\p$ is $H^1\big(Y, \Ac^{(j^\p)}_{T^*_{Y, -}}\big)$. 

\subsubsection{The Filtration on Exterior Powers}
We digress now onto the following observation whose relevance will soon become apparent. With $(Y, T^*_{Y, -})$ the $gt$-model for $(\widetilde\Uc\ra\widetilde \Xc)$, recall that
%\newpage
%To a $gtm$-family of supermanifolds modelled on $(X, T^*_{X, -})$ and over $\widetilde B = (B, T^*_{B, -})$
% with total space model $(Y, T^*_{Y, -})$, recall that 
 $T^*_{Y, -}$ is an extension of $T^*_{X, -}$ by $T^*_{B, -}$, i.e., it fits into an exact sequence $0\ra T^*_{B, -} \ra T^*_{Y, -}\ra T^*_{X, -}\ra 0$. As such, the $j^\p$-th exterior power $\wedge^{j^\p}T^*_{Y, -}$ carries a filtration by $\Oc_Y$-modules $F^{j^\p} = (F_k^{~j^\p})_{k = 0,\ldots, j^\p}$ where:
\begin{enumerate}[$\bt$]
	\item $0 \subset F_{j^\p}^{~j^\p} \subset F_{j^\p-1}^{~j^\p} \subset \cdots \subset F_{0}^{~j^\p} = \wedge^{j^\p}T^*_{Y, -}$
	\item for any $k = 0, \ldots, j^\p$ that:
	\begin{align}
	F^{~j^\p}_k/F^{~j^\p}_{k+1} \cong \wedge^{k}T^*_{B, -}\boxtimes \wedge^{j^\p-k}T^*_{X, -}.
	\label{fjbbvrbviubvouevoineiov}
	\end{align}
\end{enumerate}
With $F^{~j^\p}_{j^\p+1} = (0)$ we evidently have, $F^{~j^\p}_{j^\p} =p_B^* \wedge^{j^\p}T^*_{B, -}$ and $F^{~j^\p}_0/F^{~j^\p}_1 = p_X^*\wedge^{j^\p}T^*_{X, -}$. We are interested in this latter relation. Observe that it gives an exact sequence of $\Oc_Y$-modules $0 \ra F_1^{~j^\p} \ra \wedge^{j^\p}T^*_{Y, -}\ra \wedge^{j^\p}T^*_{X, -}\ra0$. Assuming $T^*_{Y, \pm}$ is locally free, the sheaf-hom functor $\mathcal Hom_{\Oc_Y} (T^*_{Y, (\pm)^{j^\p}}, -)$ will be exact and therefore gives an exact sequence
\begin{align}
0 
\lra 
\mathcal Hom_{\Oc_Y} (T^*_{Y, (\pm)^{j^\p}}, F_1^{~j^\p})
\lra
\Ac_{T^*_{Y, -}}^{(j^\p)}
\lra
\mathcal Hom_{\Oc_Y} (T^*_{Y, (\pm)^{j^\p}}, \wedge^{j^\p}T^*_{X, -})
\lra 0
\label{rg784gf73gf9h398fh3hf83}
\end{align}
where we have used the characterisation of the obstruction sheaf as the hom-sheaf in \eqref{fbvkbvyibiufioenievpmpeoe}, i.e., that  $\Ac_{T^*_{Y, -}}^{(j^\p)} =  \mathcal Hom_{\Oc_Y} (T^*_{Y, (\pm)^{j^\p}}, \wedge^{j^\p}T^*_{Y, -})$.

\subsubsection{Compatibility}
Recall from \eqref{ffgggruepepedjfhffhfj} that an embedding of models $(X, T^*_{X, -})\subset(Y, T^*_{Y, -})$ presumes a surjection $T^*_{Y, (\pm)^{j^\p}}|_X \ra T^*_{X, (\pm)^{j^\p}}$ for  all $j^\p$. By contravariance of the hom-functor we then have a natural transformation $\iota: \mathcal Hom(T^*_{X, \pm}, -) \Rightarrow \mathcal Hom(T^*_{Y, \pm}|_X, -)$. This allows for a comparison of the obstruction sheaf of the total space and the fiber through the following maps:
\begin{align}
\xymatrix{
& & &\Ac_{T^*_{Y, -}}^{(j^\p)}\ar[d]^{p}
\\
p_X^*\Ac_{T^*_{X, -}}^{(j^\p)} \ar[rrr]_\iota & & & \mathcal Hom_{\Oc_Y} \big(T^*_{Y, (\pm)^{j^\p}}, \wedge^{j^\p}T^*_{X, -}\big)
}
\label{regf79f98h89hf08f03}
\end{align}
On cohomology, recalling that $H^1\big(Y, p_X^*\Ac_{T^*_{X, -}}^{(j^\p)}\big) = H^1\big(Y,\Ac_{p_X^*T^*_{X, -}}^{(j^\p)}\big)$ is the obstruction space for $(Y, p_X^*T^*_{X, -})$ when $B$ is Stein, we therefore have the compatibility diagram for the embedding of models $(Y, p_X^*T^*_{X, -})\subset (Y, T^*_{Y, -})$ stated below in the form of a proposition.

\begin{PROP}\label{fjdkbvjvbebvuibvevbeo}
Let $(Y, T^*_{Y, -})$ be a $gt$-family of models over $(B, T^*_{B, -})$ with fiber $(X, T^*_{X, -})$ and $B$ Stein. Then the compatibility diagram for the embedding of models $(Y, p_X^*T^*_{X, -})\subset (Y, T^*_{Y, -})$ fits into the following:
\[
\xymatrix{
& & \vdots\ar[d]
\\
& & H^1(Y, \mathcal Hom_{\Oc_Y}(T^*_{Y, (\pm)^j}, F_1^{~j^\p})\big)\ar[d]
\\
\ar[d] H^1\big(Y, \Ac_{p_X^*T^*_{X, -}; T^*_{Y, -}}^{(j^\p)}\big)\ar[rr] & &H^1\big(Y, \Ac^{(j^\p)}_{T^*_{Y, -}}\big) \ar[d]^{p_*}
\\
 H^1\big(Y, p_X^*\Ac^{(j^\p)}_{T^*_{X, -}}\big) \ar[rr]^{\iota_*} & %H^1\big(X, \Ac^{(j^\p)}_{T^*_{Y, -}}|_X\big) \ar[r] 
 & H^1\big(Y, \mathcal Hom_{\Oc_Y} (T^*_{Y, (\pm)^{j^\p}}, \wedge^{j^\p}T^*_{X, -})\big)\ar[d]
 \\
 & & H^2(Y, \mathcal Hom_{\Oc_Y}(T^*_{Y, (\pm)^{j^\p}}, F_1^{~j^\p})\big) \ar[d]
 \\
 & & \vdots
}
\]
where the vertical column on the right-most side is exact and induced from \eqref{rg784gf73gf9h398fh3hf83}. In particular, we have the comparison of obstruction classes,
\begin{align}
\iota_*\om(\Uc\ra\Xc) = p_*\om(\widetilde\Uc\ra\widetilde\Xc)
\label{fjvkbvurbvuevioenvoinev}
\end{align}
where $(\Uc\ra\Xc\ra B)$ is the underlying $gtm$-family of $(\widetilde\Uc\ra\widetilde\Xc\ra\widetilde B)$.
\qed
\end{PROP}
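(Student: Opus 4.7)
The plan is to assemble the claimed diagram from three ingredients and then derive the identity \eqref{fjvkbvurbvuevioenvoinev} by a diagram chase on an embedding class.

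First, to produce the right-hand vertical column, I would take the bottom step of the filtration \eqref{fjbbvrbviubvouevoineiov}, which supplies the short exact sequence $0 \to F_1^{~j^\p} \to \wedge^{j^\p}T^*_{Y,-} \to \wedge^{j^\p}T^*_{X,-} \to 0$. Applying $\mathcal Hom_{\Oc_Y}(T^*_{Y, (\pm)^{j^\p}}, -)$, which is exact by local freeness of $T^*_{Y, (\pm)^{j^\p}}$, yields the short exact sequence \eqref{rg784gf73gf9h398fh3hf83}, and the long exact cohomology sequence on $Y$ is then precisely the vertical column on the right of the diagram, with $p_*$ the cohomological projection.

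Second, the $gt$-model extension of Definition \ref{rfh74gf9hf830fj93jf}(ii) furnishes a canonical embedding of models $(Y, p_X^*T^*_{X, -}) \subset (Y, T^*_{Y,-})$ via the identity on reduced spaces together with the surjection $T^*_{Y,-} \twoheadrightarrow p_X^*T^*_{X,-}$. Specialising the general template \eqref{fvnkbvhbrybvunienvimepo} to this embedding produces a compatibility square whose bottom-left term $H^1\big(Y, \Ac^{(j^\p)}_{p_X^*T^*_{X,-}}\big)$ I would identify with $H^1\big(Y, p_X^*\Ac^{(j^\p)}_{T^*_{X,-}}\big)$ via the summand splitting supplied, for $B$ Stein, by Lemma \ref{fnkbvhjbvjhdkndkln} together with the $\Cbb^\times$-weight argument of Proposition \ref{rh973hf983hf80jf093fj}. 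Post-composing the two outgoing edges of that square with the cohomology of the natural sheaf maps $\iota$ and $p$ of \eqref{regf79f98h89hf08f03} then gives the square as drawn, and its commutativity reduces to the sheaf-level commutativity already implicit in \eqref{regf79f98h89hf08f03}.

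Third, the identity \eqref{fjvkbvurbvuevioenvoinev} follows from a chase on the embedding class $c_{(\Uc\ra\Xc)/(\widetilde\Uc\ra\widetilde\Xc)} \in H^1\big(Y, \Ac^{(j^\p)}_{p_X^*T^*_{X,-};T^*_{Y,-}}\big)$, which exists by Definition \ref{ckjvbbvjfbvhjbnckld}(ii) together with Definition \ref{f748gf84hf79h8hf}. The defining properties of such a class send it to $\om(\widetilde\Uc\ra\widetilde\Xc)$ under $u_*$ and to $\om(\Uc\ra\Xc)$ under $r_*$, and commutativity of the combined square then delivers $\iota_*\om(\Uc\ra\Xc) = p_*\om(\widetilde\Uc\ra\widetilde\Xc)$. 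The hardest step, I anticipate, will be the sheaf-level commutativity underlying the second paragraph: showing that the two routes into $\mathcal Hom_{\Oc_Y}(T^*_{Y, (\pm)^{j^\p}}, \wedge^{j^\p}T^*_{X,-})$ — via $\iota$ from the inner obstruction sheaf and via $p$ from the outer — agree once combined with the embedding-of-models maps. This reduces to tracking how the filtration $F^{j^\p}$ on $\wedge^{j^\p}T^*_{Y,-}$ interacts with the contravariant action of $\mathcal Hom$ on the surjection $T^*_{Y, -} \twoheadrightarrow p_X^*T^*_{X,-}$, which is elementary multilinear algebra but requires careful bookkeeping of the parity conventions encoded in $(\pm)^{j^\p}$.
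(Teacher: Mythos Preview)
Your proposal is correct and matches the paper's approach exactly: the paper presents no separate proof (the proposition carries a \qed), having already laid out in the preceding subsubsections precisely the three ingredients you identify --- the long exact sequence from \eqref{rg784gf73gf9h398fh3hf83}, the Stein-base identification via Lemma \ref{fnkbvhjbvjhdkndkln} and Proposition \ref{rh973hf983hf80jf093fj}, and the sheaf maps $\iota, p$ of \eqref{regf79f98h89hf08f03}. The sheaf-level commutativity you flag as the hardest step is indeed elementary and is already taken for granted by the paper in the display \eqref{regf79f98h89hf08f03}.
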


\section{$gtm$-Families over Superspaces: Obstruction Maps}
\label{fjvbgbvirbuiburnvoenoinveo}

%\subsection{The Model Class and Obstructions to Existence}
\noindent 
In Example \ref{reugf78hf98h30f830} and \ref{rgf784gf73hf83h0j9f33333} it was asserted that, to any deformation of a super Riemann surface $(\Uc\ra\Xc\ra \Cbb^{0|1})$, the class of its model coincides with the obstruction to splitting the total space $(\Uc\ra\Xc)$. This relation is suggestive of a more general role played by the class of a $gt$-model and the families it defines. We propose that it plays a role in obstructing the existence of families.

\subsection{Refinements and Secondary Obstructions}

\subsubsection{Refinements of Splitting Type}
Let $(Y, T^*_{Y,-})$ be a $gt$-model over $(B, T^*_{B, -})$ with fiber $(X, T^*_{X, -})$. With the filtration on $\wedge^{j^\p}T^*_{Y, -}$ by the sheaves $F^{j^\p} = (F^{~j^\p}_k)_{k = 0, \ldots, j^\p}$ leading to \eqref{fjbbvrbviubvouevoineiov}, we have a filtratration of the obstruction sheaf
\begin{align*}
\mathcal Hom_{\Oc_Y}\big(T^*_{Y, (\pm)^{j^\p}},F^{~j^\p}_{j^\p}\big)
\subset  
\mathcal Hom_{\Oc_Y}&\big(T^*_{Y, (\pm)^{j^\p}},F^{~j^\p}_{j^\p-1}\big)
\\
&
\subset 
\cdots
\subset
\mathcal Hom_{\Oc_Y}\big(T^*_{Y, (\pm)^{j^\p}},F^{~j^\p}_{1}\big)
\subset
\mathcal A_{T^*_{Y, -}}^{(j^\p)}
\end{align*}
where the latter inclusion above is the map in \eqref{rg784gf73gf9h398fh3hf83}. The above inclusions of sheaves induce maps on cohomology. We are interested in the map in degree one. For each $k = 1, \ldots, j^\p$ we have:
\[
\iota_{k*}:
H^1\big(Y,\mathcal Hom_{\Oc_Y}\big(T^*_{Y, (\pm)^{j^\p}},F^{~j^\p}_{k}\big)\big)
\lra 
H^1\big(Y, \mathcal A_{T^*_{Y, -}}^{(j^\p)}\big).
\]
Given a family of supermanifolds $(\widetilde\Uc\ra\widetilde\Xc\ra \widetilde B)$ of splitting type $j^\p$ and obstruction class $\om(\widetilde\Uc\ra\widetilde\Xc)$, there is no guarantee that $\om(\widetilde\Uc\ra\widetilde\Xc)$ will lie in the image of $\iota_{k*}$ for some $k$. Indeed, if $\om(\Uc\ra\Xc)$ is the obstruction to splitting the underlying family and $\iota_*\om(\Uc\ra\Xc) \neq0$, then \eqref{fjvkbvurbvuevioenvoinev} implies $\om(\widetilde\Uc\ra\widetilde\Xc)$ cannot lie in the image of $\iota_{k*}$ for \emph{any} $k$. We will be primarily interested in the case where $\iota_*\om(\Uc\ra\Xc) =0$.

\begin{DEF}
\emph{A $gtm$-family of supermanifolds over a Stein, superspace base is said to have a \emph{refined splitting type} if the obstruction to splitting its underlying family vanishes in the correspondence space, i.e., if its image under $\iota_*$ in the diagram in Proposition \ref{fjdkbvjvbebvuibvevbeo} is trivial.}
\end{DEF}

\noindent
By exactness of the right-most vertical column in the diagram in Proposition \ref{fjdkbvjvbebvuibvevbeo}, that a $gtm$-family of supermanifolds will have a refined splitting type is equivalent to the statement: \emph{its obstruction to splitting lies in the image of $\iota_{k*}$ for some $k$}. 

\begin{DEF}\label{rfbygf874gf93hf0j39fj3}
\emph{A $gtm$-family of supermanifolds over a Stein, superspace base is said to have \emph{refined splitting type $j^\p = a + b$} for some $b> 0$ if
%(j^\p-k) + k$} for some $k>0$ if:
\begin{enumerate}[(i)]
	\item it has splitting type $j^\p$ and;
%	\item its splitting type is refined and;
	\item its obstruction splitting lies in the image of $\iota_{b*}$ and \emph{not} $\iota_{b+1*}$.
\end{enumerate}}
\end{DEF}

\subsubsection{The Secondary Obstruction}
For each $a, b$ with $a+b = j^\p$ we have the quotient from \eqref{fjbbvrbviubvouevoineiov}, which is here $F^{~j^\p}_b/ F^{~j^\p}_{b+1} \cong \wedge^{b}T^*_{B, -}\boxtimes \wedge^{a}T^*_{X, -}$. For convenience we set:
\[
{\bf H}^{a, b}_n(Y, T^*_{Y, -}) 
\stackrel{\Delta}{=}
H^n\big(Y, \mathcal Hom_{\Oc_Y}(T^*_{Y, (\pm)^{a+b}}, \wedge^{b}T^*_{B, -}\boxtimes \wedge^{a}T^*_{X, -})\big).
\]
With the short exact sequence $F^{~j^\p}_{b+1}\ra F^{~j^\p}_{b} \ra F^{~j^\p}_b/ F^{~j^\p}_{b+1}$ we obtain the following diagram of cohomology spaces 
\begin{align}
\xymatrix{
\ar[d]^{\iota_{b+1*}}H^1\big(Y, \mathcal Hom_{\Oc_Y}(T^*_{Y, (\pm)^{j^\p}}, F^{~j^\p}_{b+1} )\big)\ar[r]
&\ar[d]^{\iota_{b*}} H^1\big(Y, \mathcal Hom_{\Oc_Y}(T^*_{Y, (\pm)^{j^\p}}, F^{~j^\p}_{b} )\big)\ar[r]& {\bf H}^{a, b}_1(Y, T^*_{Y, -})
\\
H^1\big(Y, \Ac_{T^*_{Y, -}}^{(j^\p)}\big)\ar@{=}[r]& H^1\big(Y, \Ac_{T^*_{Y, -}}^{(j^\p)}\big)&
}
\label{rf478gf74f98hf830f9j3}
\end{align}
If a $gtm$-family has refined splitting type $j^\p = a+b$, then it obstruction to splitting will lie in the image of $\iota_{b*}$. Its projection onto a class in ${\bf H}^{a, b}_1(Y, T^*_{Y, -})$ will be referred to as the \emph{secondary obstruction} to splitting the family. As the top row of spaces in \eqref{rf478gf74f98hf830f9j3} is exact, Definition \ref{rfbygf874gf93hf0j39fj3}(ii) amounts to the statement: \emph{the secondary obstruction to splitting a $gtm$-family with refined splitting type is non-vanishing}.

\subsection{The Obstruction Maps}
In taking motivation from an analogous problem for integrating thickenings to supermanifolds in \cite{BETTPHD, BETTOBSTHICK}, we consider the following question:

\begin{QUE}\label{rfh849hf84jf0jf933333}
Fix a $gt$-model $(Y, T^*_{Y, -})$. When will a class in ${\bf H}^{a, b}_1(Y, T^*_{Y, -})$ be a secondary obstruction to splitting some $gtm$-family of supermanifolds with total space model $(Y, T^*_{Y, -})$?
\end{QUE}

\noindent
Following on from \eqref{rf478gf74f98hf830f9j3}, observe that we have a composition for each $p$ defining the map $\pt_p^{\{a, b\}}$:
\begin{align}
\xymatrix{
%\ar@/_/@{-->}[drr] 
\ar@{-->}[drr]|{\pt_p^{\{a, b\}}} {\bf H}^{a, b}_p(Y, T^*_{Y, -}) \ar[rr] & & H^{p+1}\big(Y, \mathcal Hom_{\Oc_Y}(T^*_{Y, (\pm)^{j^\p}}, F^{~j^\p}_{b+1} )\big)\ar[d]
\\
& & {\bf H}^{a-1, b+1}_{p+1}(Y, T^*_{Y, -})
}
\label{rf748f4hf93fjf030fj390}
\end{align}
The obstruction spaces of a model give rise to a differential complex, referred to in \cite{BETTAQ} as the obstruction complex. Similarly, we will see that \eqref{rf748f4hf93fjf030fj390} will likewise give a differential complex.

\begin{PROP}\label{rg78gf87gf973h98hf08h3}
The collection $\big(\pt^{\{a, b\}}_p : {\bf H}^{a, b}_p(Y, T^*_{Y, -})\ra{\bf H}^{a-1, b+1}_{p+1}(Y, T^*_{Y, -})\big)_{p\in \Zbb}$ defines a bounded, differential complex for any $gt$-model $(Y, T^*_{Y, -})$.
\end{PROP}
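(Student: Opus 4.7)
The plan is to realise the maps $\partial^{\{a,b\}}_p$ as the $d_1$-differentials of the spectral sequence associated to the filtered $\Oc_Y$-module $\mathcal Hom_{\Oc_Y}(T^*_{Y, (\pm)^{j^\p}}, \wedge^{j^\p} T^*_{Y, -})$, with filtration induced by $F^{j^\p}_\bullet$. Since differentials in a spectral sequence automatically satisfy $d^2 = 0$, this delivers $\partial^{\{a-1, b+1\}}_{p+1} \circ \partial^{\{a, b\}}_p = 0$ for free once the identification is made.

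Concretely, write $\mathcal H(-) = \mathcal Hom_{\Oc_Y}(T^*_{Y, (\pm)^{j^\p}}, -)$, which is exact since $T^*_{Y, (\pm)^{j^\p}}$ is locally free by the standing assumption. Unpacking the definition of $\partial^{\{a,b\}}_p$ in \eqref{rf748f4hf93fjf030fj390}, it is the composition of the connecting morphism for
\begin{equation*}
0 \to \mathcal H(F^{j^\p}_{b+1}) \to \mathcal H(F^{j^\p}_b) \to \mathcal H(F^{j^\p}_b/F^{j^\p}_{b+1}) \to 0
\end{equation*}
with the projection $\mathcal H(F^{j^\p}_{b+1}) \to \mathcal H(F^{j^\p}_{b+1}/F^{j^\p}_{b+2})$. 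A short diagram chase shows that this composition coincides with the connecting homomorphism of the three-term-quotient sequence
\begin{equation*}
0 \to \mathcal H(F^{j^\p}_{b+1}/F^{j^\p}_{b+2}) \to \mathcal H(F^{j^\p}_b/F^{j^\p}_{b+2}) \to \mathcal H(F^{j^\p}_b/F^{j^\p}_{b+1}) \to 0,
\end{equation*}
and the isomorphism $F^{j^\p}_b/F^{j^\p}_{b+1} \cong \wedge^b T^*_{B, -} \boxtimes \wedge^a T^*_{X, -}$ from \eqref{fjbbvrbviubvouevoineiov} identifies its degree $p$ cohomology with ${\bf H}^{a, b}_p(Y, T^*_{Y, -})$.

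To verify $\partial^{\{a-1, b+1\}}_{p+1} \circ \partial^{\{a, b\}}_p = 0$, I would organise the three-step piece $F^{j^\p}_{b+3} \subset F^{j^\p}_{b+2} \subset F^{j^\p}_{b+1} \subset F^{j^\p}_b$ into the standard $3 \times 3$ diagram of short exact sequences of successive sub-quotients. Applying $\mathcal H(-)$ and taking cohomology, naturality of connecting morphisms realises the iterated composition $\partial \circ \partial$ as a pair of consecutive connecting maps in a single long exact sequence (namely the one coming from $0 \to \mathcal H(F^{j^\p}_{b+2}/F^{j^\p}_{b+3}) \to \mathcal H(F^{j^\p}_{b+1}/F^{j^\p}_{b+3}) \to \mathcal H(F^{j^\p}_{b+1}/F^{j^\p}_{b+2}) \to 0$, composed with a projection), and any two consecutive morphisms in a single long exact sequence compose to zero. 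This is the concrete shadow of $d_1^2 = 0$ in the spectral sequence picture.

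Boundedness of the complex is immediate: along the differential $a$ decreases by one and $b$ increases by one while $a+b = j^\p$ remains fixed, so starting from any $(a, b)$ with $a, b \geq 0$, the complex terminates after at most $j^\p + 1$ steps. The main obstacle I anticipate is the bookkeeping around signs and shift identifications — specifically, verifying that the composition of connecting morphism and quotient projection really does equal the single connecting morphism of the three-term-quotient sequence. Once that is in place, the vanishing of $\partial^2$ is a structural consequence of the spectral sequence formalism rather than a genuine computation.
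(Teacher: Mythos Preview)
Your proposal is correct, and the underlying mechanism---two consecutive maps in a long exact sequence compose to zero---is the same as the paper's. The paper, however, reaches it more directly. Rather than re-identifying $\pt^{\{a,b\}}_p$ with the connecting map of the three-term quotient sequence and then assembling a $3\times3$ diagram, it works straight from the definition in \eqref{rf748f4hf93fjf030fj390}: writing out both $\pt^{\{a,b\}}_p$ and $\pt^{\{a-1,b+1\}}_{p+1}$ as those composites, the middle two maps of the resulting four-fold composition are precisely the consecutive maps
\[
H^{p+1}\big(Y,\mathcal Hom_{\Oc_Y}(T^*_{Y,(\pm)^{j^\p}}, F^{~j^\p}_{b+1})\big) \to {\bf H}^{a-1,b+1}_{p+1}(Y,T^*_{Y,-}) \to H^{p+2}\big(Y,\mathcal Hom_{\Oc_Y}(T^*_{Y,(\pm)^{j^\p}}, F^{~j^\p}_{b+2})\big)
\]
in the long exact sequence attached to $0 \to F^{~j^\p}_{b+2} \to F^{~j^\p}_{b+1} \to F^{~j^\p}_{b+1}/F^{~j^\p}_{b+2} \to 0$, and hence compose to zero. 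Your spectral-sequence framing buys a structural explanation (these are $d_1$-differentials, so $d_1^2=0$ is automatic) at the cost of one extra identification step; the paper's route buys brevity. Your boundedness argument---$a$ strictly decreases while $a+b=j^\p$ stays fixed, so the complex has at most $j^\p+1$ nonzero terms---is arguably sharper than the paper's appeal to vanishing of ${\bf H}^{a,b}_p$ for $a+b>\min\{\dim Y,\mathrm{rank}\,T^*_{Y,-}\}$.
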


\begin{proof}
For either dimensional reasons or general properties of exterior powers, the cohomology group ${\bf H}^{a, b}_{p}(Y, T^*_{Y, -})$ will be trivial for $a + b > \min\{\dim Y, \mathrm{rank}~T^*_{Y, -}\}$. Hence, to confirm the proposition, it remains to show $\pt^{\{a-1, b+1\}}_{p+1}\circ \pt^{\{a, b\}}_p = 0$. Since these maps are, respectively, composites we have the following commutative diagram:
\[
\xymatrix{
{\bf H}^{a, b}_p(Y,T^*_{Y, -}) \ar@{-->}[dr]_{\pt^{\{a, b\}}_p}  \ar[r] & H^{p+1}\big(Y, \mathcal Hom_{\Oc_Y}(T^*_{X, \pm}, F^{~j^\p}_{b+1}\big)\big) \ar[d]
\\
& {\bf H}^{a-1, b+1}_{p+1}(Y,T^*_{Y, -})\ar[d]\ar@{-->}[dr]^{\pt^{\{a-1, b+1\}}_{p+1}}
\\
&
H^{p+2}\big(Y, \mathcal Hom_{\Oc_Y}(T^*_{X, \pm}, F^{~j^\p}_{b+2}\big)\big) \ar[r]
 & {\bf H}^{a-2, b+2}_{p+2}(Y,T^*_{Y, -})
}
\]
where $a+b=j^\p$. The composition $\pt^{\{a-1, b+1\}}_{p+1}\circ \pt^{\{a, b\}}_p$ will vanish since (i) the above diagram commutes and; (ii) the vertical column in the above diagram is exact. 
\end{proof}

\noindent
A partial resolution to Question \ref{rfh849hf84jf0jf933333} is now immediate. 

\begin{PROP}\label{rfh74fg874f93hf83j0f93}
If a class $\nu\in {\bf H}^{a, b}_1(Y,T^*_{Y, -})$ can be identified with a secondary obstruction to splitting a $gtm$-family with total space model $(Y, T^*_{Y, -})$, then it is necessary for $\pt^{\{a, b\}}_1(\nu) = 0$.\qed
\end{PROP}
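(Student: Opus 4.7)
The plan is to reduce the statement to a diagram chase through the long exact sequence associated to the filtration. First, I would unpack the hypothesis that $\nu$ is a secondary obstruction. By Definition \ref{rfbygf874gf93hf0j39fj3}, this means there exists a $gtm$-family $(\widetilde\Uc\ra\widetilde\Xc\ra \widetilde B)$ with total space model $(Y, T^*_{Y, -})$ and refined splitting type $j^\p = a+b$; in particular its obstruction class $\om(\widetilde\Uc\ra\widetilde\Xc)\in H^1\big(Y, \Ac^{(j^\p)}_{T^*_{Y, -}}\big)$ lies in the image of $\iota_{b*}$, so there exists a lift
\[
\mu\in H^1\big(Y, \mathcal{H}om_{\Oc_Y}(T^*_{Y, (\pm)^{j^\p}}, F^{~j^\p}_{b})\big)
\]
with $\iota_{b*}(\mu) = \om(\widetilde\Uc\ra\widetilde\Xc)$, and $\nu$ is by construction the image of $\mu$ under the horizontal map to ${\bf H}^{a, b}_1(Y, T^*_{Y, -})$ induced by the surjection $F^{~j^\p}_{b}\twoheadrightarrow F^{~j^\p}_{b}/F^{~j^\p}_{b+1}\cong \wedge^b T^*_{B, -}\boxtimes \wedge^a T^*_{X, -}$, as in \eqref{rf478gf74f98hf830f9j3}.

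Next, I would recall the definition of $\pt_1^{\{a, b\}}$ from \eqref{rf748f4hf93fjf030fj390}: it is the composition of the connecting map $\dt: {\bf H}^{a, b}_1(Y, T^*_{Y, -})\ra H^2\big(Y, \mathcal{H}om_{\Oc_Y}(T^*_{Y, (\pm)^{j^\p}}, F^{~j^\p}_{b+1})\big)$ of the long exact sequence associated to the short exact sequence
\[
0\lra \mathcal{H}om_{\Oc_Y}(T^*_{Y, (\pm)^{j^\p}},F^{~j^\p}_{b+1})
\lra \mathcal{H}om_{\Oc_Y}(T^*_{Y, (\pm)^{j^\p}},F^{~j^\p}_{b})
\lra \mathcal{H}om_{\Oc_Y}(T^*_{Y, (\pm)^{j^\p}},F^{~j^\p}_{b}/F^{~j^\p}_{b+1})\lra 0
\]
(which is exact by local freeness of $T^*_{Y, (\pm)^{j^\p}}$), followed by the projection to the summand ${\bf H}^{a-1, b+1}_2(Y, T^*_{Y, -})$.

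Finally, I would invoke exactness of the long exact sequence directly: since $\nu$ is by construction the image of a class $\mu$ that lifts to $H^1\big(Y, \mathcal{H}om_{\Oc_Y}(T^*_{Y, (\pm)^{j^\p}}, F^{~j^\p}_{b})\big)$, exactness forces $\dt(\nu) = 0$, and therefore $\pt_1^{\{a, b\}}(\nu) = 0$. There is no substantive obstacle here; the statement is essentially a tautology once one unpacks that $\pt_1^{\{a, b\}}$ factors through the connecting map and that secondary obstructions are by definition liftable classes. The real content of the present framework lies in the higher differentials of Proposition \ref{rg78gf87gf973h98hf08h3} and the question of converse statements, which motivates the subsequent Conjecture \ref{rhf74gf74gf983hf80309fj30}.
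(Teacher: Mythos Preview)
Your argument is correct and matches the paper's own reasoning, which simply marks the proposition with \qed: the statement is immediate once one observes that $\pt_1^{\{a,b\}}$ factors through the connecting map of the long exact sequence associated to $F^{~j^\p}_{b+1}\ra F^{~j^\p}_{b}\ra F^{~j^\p}_{b}/F^{~j^\p}_{b+1}$, and that a secondary obstruction is by definition the image of a class lifting to $H^1\big(Y,\mathcal Hom_{\Oc_Y}(T^*_{Y,(\pm)^{j^\p}},F^{~j^\p}_{b})\big)$, whence exactness kills it.
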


\noindent
As a consequence of Proposition \ref{rg78gf87gf973h98hf08h3} and \ref{rfh74fg874f93hf83j0f93} we propose the following definition.

\begin{DEF}\label{fjvkbvrbvbuvbenvoeineoddd}
\emph{For any $gt$-model $(Y, T^*_{Y, -})$, the complex in Proposition \ref{rg78gf87gf973h98hf08h3} will be referred to as its \emph{secondary obstruction complex}. The differentials in degree one, i.e., the maps $\pt^{\{a, b\}}_1$ in Proposition \ref{rfh74fg874f93hf83j0f93}, will be referred to as \emph{obstruction maps}.}
\end{DEF}

\noindent
In what follows we will construct another map ${\bf H}^{a, b}_p(Y,T^*_{Y, -})\ra {\bf H}^{a-1, b+1}_{p+1}(Y,T^*_{Y, -})$ by reference to the class of the model $(Y, T^*_{Y, -})$. We will conjecture that the image of this map will be contained in the image of the differentials in the secondary obstruction complex, thereby explicitly revealing a role played by the model class in obstructing the existence of families with refined splitting type. We present an affirmation of this conjecture in the case $a = 1$.

\section{Obstruction Maps and the Model Class}

\noindent
We firstly note the following immediate relation. Let $(Y, T^*_{Y, -})$ be a $gt$-model. Since its model class $\Theta(T^*_{Y, -})$ represents the obstruction to $T^*_{Y, -}$ being split we can deduce:

\begin{PROP}
Let $(Y, T^*_{Y, -})$ be a $gt$-model with trivial model class. Then the differentials in its secondary complex are all trivial.
\end{PROP}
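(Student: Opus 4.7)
The plan is to show that triviality of the model class forces the filtration on $\wedge^{j^\p}T^*_{Y,-}$ to split as a direct sum, and then to observe that split short exact sequences produce zero connecting homomorphisms in any cohomology theory, killing the first arrow of the composite that defines $\pt_p^{\{a,b\}}$.

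First I would unpack what triviality of the model class $\Theta(T^*_{Y,-})\in \mathrm{Ext}^1_{\Oc_Y}(p_X^*T^*_{X,-}, p_B^*T^*_{B,-})$ means concretely: it is equivalent to the existence of an $\Oc_Y$-linear splitting of the defining sequence $0\to p_B^*T^*_{B,-}\to T^*_{Y,-}\to p_X^*T^*_{X,-}\to 0$, and hence to an isomorphism $T^*_{Y,-}\cong p_B^*T^*_{B,-}\oplus p_X^*T^*_{X,-}$. Using the standard identity $\wedge^{j^\p}(A\oplus B)\cong \bigoplus_{k} \wedge^k A\otimes \wedge^{j^\p-k}B$ applied to this direct sum decomposition (externally, $\boxtimes$), the exterior power decomposes as $\wedge^{j^\p}T^*_{Y,-}\cong \bigoplus_{k=0}^{j^\p} \wedge^k T^*_{B,-}\boxtimes \wedge^{j^\p-k}T^*_{X,-}$, and this decomposition is compatible with the filtration $F^{j^\p}_\bt$: each $F^{j^\p}_k$ is identified with the subsum $\bigoplus_{\ell\geq k}\wedge^\ell T^*_{B,-}\boxtimes \wedge^{j^\p-\ell}T^*_{X,-}$. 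Consequently, for every $b$ the short exact sequence
\[
0\lra F^{j^\p}_{b+1}\lra F^{j^\p}_b\lra F^{j^\p}_b/F^{j^\p}_{b+1}\lra 0
\]
is split as a sequence of $\Oc_Y$-modules.

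Next I would apply the additive functor $\mathcal Hom_{\Oc_Y}(T^*_{Y,(\pm)^{j^\p}},-)$; since this functor carries split exact sequences of $\Oc_Y$-modules to split exact sequences of sheaves, and split exact sequences induce the zero connecting morphism in any derived functor, the connecting map
\[
{\bf H}^{a,b}_p(Y,T^*_{Y,-})\lra H^{p+1}\big(Y,\mathcal Hom_{\Oc_Y}(T^*_{Y,(\pm)^{j^\p}},F^{j^\p}_{b+1})\big)
\]
in the associated long exact sequence on cohomology vanishes for every $p$ and every $a,b$ with $a+b=j^\p$. But this is precisely the first arrow in the composite \eqref{rf748f4hf93fjf030fj390} defining $\pt_p^{\{a,b\}}$, so the full composite is zero.

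The main obstacle, though it is essentially bookkeeping rather than a genuine obstruction, is verifying that the direct-sum splitting of $T^*_{Y,-}$ really does induce a splitting of the filtration $F^{j^\p}_\bt$ and not merely of each associated graded piece; this reduces to the observation that the filtration on $\wedge^{j^\p}$ of a direct sum by ``$B$-degree'' is always split by the very decomposition above, so no subtlety arises. With this in hand, the vanishing of every $\pt_p^{\{a,b\}}$ and hence of the differentials of the secondary obstruction complex follows.
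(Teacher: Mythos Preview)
Your proof is correct and follows essentially the same approach as the paper: both argue that the first arrow in the composite defining $\pt_p^{\{a,b\}}$, namely the connecting homomorphism ${\bf H}^{a,b}_p(Y,T^*_{Y,-})\to H^{p+1}\big(Y,\mathcal Hom_{\Oc_Y}(T^*_{Y,(\pm)^{j^\p}},F^{~j^\p}_{b+1})\big)$, vanishes when the model class is trivial. The paper simply asserts this vanishing, while you supply the explicit justification via the induced splitting of the filtration on $\wedge^{j^\p}T^*_{Y,-}$.
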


\begin{proof}
By construction in \eqref{rf748f4hf93fjf030fj390} the differential $\pt^{\{a, b\}}_p$ has as a composite the boundary map $ {\bf H}^{a, b}_p(Y, T^*_{Y, -})\ra H^{p+1}\big(Y, \mathcal Hom_{\Oc_Y}(T^*_{Y, (\pm)^{j^\p}}, F^{~j^\p}_{b+1} )\big)$. This map will vanish if the model class vanishes and, therefore, so will $\pt^{\{a, b\}}_p$.
\end{proof}

\subsection{The Model Class Map}
Presently, we aim to get a better understanding of the secondary obstruction complex in the case where the model class is not necessarily trivial. This will entail deriving explicit and general relations between the model class and the boundary maps in the secondary obstruction complex. To begin, let $\Ac$ be an abelian sheaf on a space $Y$. Then the $\Ac$-valued $p$-cochains $C^p(Y, \Ac)$ will be an abelian group.

\begin{REM}\label{rfg784gf97hf8wwwwhf0}
\emph{The $p$-cochains on $Y$, $C^p(Y, -)$, defines a \emph{covariant} functor from the category of $\Oc_Y$-modules to abelian groups. This means, if $\Fc\ra \Gc$ is a morphism of $\Oc_Y$-modules, we have a corresponding morphism of abelian groups $C^p(Y, \Fc)\ra C^p(Y, \Gc)$.}
\end{REM}

\noindent
Now for two abelian sheaves $\Ac$ and $\Bc$, their tensor product induces a map on cochains $C^q(Y, \Ac) \otimes C^p(Y, \Bc) \ra C^{q+p}(Y, \Ac\otimes \Bc)$ given by,
\begin{align}
(\phi\otimes \vp)_{i_0\cdots i_{p+q}}
=
\phi_{i_0\cdots i_p}\otimes \vp_{i_{p}\cdots i_{p+q}}
\label{fvbbvuibeviubeuvueoe}
\end{align}
on a $(p+q)$-fold intersection $U_{i_0}\cap\cdots\cap U_{i_{p+q}}$. As remarked in \cite[pp. 29-30]{BRY}, the map induced by \eqref{fvbbvuibeviubeuvueoe} on \v Cech cohomology defines the cup product. It coincides with the cup product on sheaf cohomology when $Y$ has sufficiently nice topological properties, i.e., is paracompact and Hausdorff. For $(Y, T^*_{Y, -})$ a $gt$-model over $(B, T^*_{B, -})$ with fiber $(X, T^*_{X, -})$, consider the sheaves:
\begin{align*}
\Ac = \mathcal Hom_{\Oc_Y}\big(T^*_{X, -}, T^*_{B, -}\big)
&&
\mbox{and}
&&
\Bc^{j^\p} = \mathcal Hom_{\Oc_Y}\big(T^*_{Y, (\pm)^{j^\p}}, \wedge^bT^*_{B, -}\boxtimes \wedge^aT^*_{X, -}\big).
\end{align*}
For each $a$ we have a local map $\wedge^aT^*_{X, -} \ra T^*_{X, -}\otimes \wedge^{a-1}T^*_{X, -}$ given by sending an $a$-fold wedge product $\q_{i_1}\wedge\cdots\wedge\q_{i_a}$ to its image under the derivation $\frac{1}{a!}\sum_\al \q_\al\pt/\pt \q_\al$. 
%We identify this derivation with a local expression for ${\bf 1}_{T^*_{X, -}}$. 
This need not necessarily give a morphism of sheaves, but we can nevertheless get a morphism of $p$-cochains $C^p(Y, \wedge^aT^*_{X, -}) \ra C^p(Y, T^*_{X, -}\otimes \wedge^{a-1}T^*_{X, -})$, $p\geq0$, inducing the map:
\begin{align}
%{\bf 1}_{T^*_{X, -}}:
C^p\big(Y, \Bc^{j^\p}\big) 
\lra 
C^p\big(Y,  \mathcal Hom_{\Oc_Y}\big(T^*_{Y, (\pm)^{j^\p}}, \wedge^bT^*_{B, -}\boxtimes T^*_{X, -}\boxtimes \wedge^{a-1}T^*_{X, -}\big)\big).
\label{rhf74gf74hf8f0j390fj930}
\end{align}
Now very generally we have a product of hom-spaces in some fixed category of modules:
\begin{align*}
\Hom(A, B)\otimes \Hom(B, C) \lra \Hom(A, C) 
&&
\mbox{given by}
&&
f\otimes g \longmapsto \langle f, g\rangle = g\circ f.
\end{align*}
This generalises to
\begin{align}
\langle-,-\rangle
:
\Hom(A, B\otimes F)\otimes \Hom(B, C) \lra \Hom(A,  C\otimes F) 
\label{rjfbigf73gf73hf3fj3fpoj}
\end{align}
For any module $F$. With the map \eqref{rhf74gf74hf8f0j390fj930} and the product in \eqref{rjfbigf73gf73hf3fj3fpoj} above we can form the following composition of cochains:
\begin{align*}
(-, -)^{\{a, b\}}: C^q\big(Y, \mathcal H&om_{\Oc_Y}\big(T^*_{X, -}, T^*_{B, -}\big)\big)
\\
&\otimes 
C^p\big(Y, \mathcal Hom_{\Oc_Y}\big(T^*_{Y, (\pm)^{j^\p}}, \wedge^bT^*_{B, -}\boxtimes \wedge^aT^*_{X, -}\big)\big)
\\
&
\lra
C^{q+p}\big(Y, \mathcal Hom_{\Oc_Y}\big(T^*_{Y, (\pm)^{j^\p}}, \wedge^bT^*_{B, -}\boxtimes T^*_{B, -}\boxtimes \wedge^{a-1}T^*_{X, -}\big)\big)
\\
&\lra 
C^{q+p}\big(Y, \mathcal Hom_{\Oc_Y}\big(T^*_{Y, (\pm)^{j^\p}}, \wedge^{b+1}T^*_{B, -}\boxtimes \wedge^{a-1}T^*_{X, -}\big)\big)
\end{align*}
where the latter map above is induced from the wedge product $\wedge^b T^*_{B, -}\otimes T^*_{B, -} \ra \wedge^{b+1}T^*_{B, -}$ (c.f., Remark \ref{rfg784gf97hf8wwwwhf0}). The composition $(-,-)^{\{a, b\}}$ above gives the following map on cohomology:
\[
(-,-)_*^{\{a, b\}}
:
H^q\big(Y, \mathcal Hom_{\Oc_Y}\big(T^*_{X, -}, T^*_{B, -}\big)\big)
\otimes 
{\bf H}^{a, b}_p(Y, T^*_{Y, -})
\lra
{\bf H}^{a-1, b+1}_{p+q}(Y, T^*_{Y, -})
\]
On identifying $H^q\big(Y, \mathcal Hom_{\Oc_Y}\big(T^*_{X, -}, T^*_{B, -}\big)\big)$ with $\mathrm{Ext}_{\Oc_Y}^q(T^*_{X, -}, T^*_{B, -})$ note that the model class $\Theta(T^*_{Y, -})$ defines an element in $\mathrm{Ext}_{\Oc_Y}^1(T^*_{X, -}, T^*_{B, -})$. Hence for $q = 1$ we have the following map:\footnote{We have omitted reference to the subscript $p$, labelling the cohomological degree, so as to avoid cumbersome notation.}
\begin{align}
\big(\Theta(T^*_{Y, -}), -\big)_*^{\{a, b\}}
:
{\bf H}^{a, b}_p(Y, T^*_{Y, -})
\lra
{\bf H}^{a-1, b+1}_{p+1}(Y, T^*_{Y, -}).
\label{rfg684g874gf9h8f9h38f3}
\end{align}
The map in \eqref{rfg684g874gf9h8f9h38f3} will be referred to as the \emph{model class map}.

\subsection{A Conjectural Relation} 
Our conjecture below concerns a comparison of the model class map with the differentials $\pt^{\{a, b\}}$ in the secondary obstruction complex of a $gt$-model.

\begin{CONJ}\label{rhf74gf74gf983hf80309fj30}
For any $gt$-model $(Y, T^*_{Y, -})$ and any $a, b$, we have a containment of images
\[
\img~\big(\Theta(T^*_{Y, -}), -\big)_*^{\{a, b\}}
\subseteq 
\img~\pt^{\{a, b\}}.
\]
\end{CONJ}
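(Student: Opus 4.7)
The strategy is to recognise both maps $\pt^{\{a,b\}}$ and $\big(\Theta(T^*_{Y,-}),-\big)_*^{\{a,b\}}$ as Yoneda pairings with classes in $\mathrm{Ext}^1$, and then compare those extension classes. Unpacking the definition, $\pt^{\{a,b\}}$ is the composition of the connecting homomorphism obtained by applying $\mathcal Hom_{\Oc_Y}(T^*_{Y, (\pm)^{j^\p}}, -)$ to the short exact sequence $0 \to F^{~j^\p}_{b+1}\to F^{~j^\p}_b \to G^{a,b}\to 0$ with the projection induced from $F^{~j^\p}_{b+1}\to G^{a-1,b+1}$, where $G^{a,b}\stackrel{\Delta}{=}\wedge^b T^*_{B,-}\boxtimes \wedge^a T^*_{X,-}$. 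But composing a connecting map with a projection onto a graded piece is itself the connecting map of the two-step sub-quotient
\[
0 \lra G^{a-1,b+1} \lra F^{~j^\p}_b/F^{~j^\p}_{b+2} \lra G^{a,b} \lra 0
\]
after applying $\mathcal Hom_{\Oc_Y}(T^*_{Y, (\pm)^{j^\p}}, -)$. Hence $\pt^{\{a,b\}}$ coincides with Yoneda pairing against the extension class $\epsilon^{a,b}\in \mathrm{Ext}^1_{\Oc_Y}(G^{a,b}, G^{a-1,b+1})$ of this short exact sequence.

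The heart of the plan is then to identify $\epsilon^{a,b}$ with the image of the model class $\Theta(T^*_{Y,-})\in \mathrm{Ext}^1_{\Oc_Y}(T^*_{X,-}, T^*_{B,-})$ under the natural transformation $\mathrm{Ext}^1(T^*_{X,-}, T^*_{B,-})\to \mathrm{Ext}^1(G^{a,b}, G^{a-1,b+1})$ built by tensoring with $\wedge^b T^*_{B,-}\boxtimes \wedge^{a-1}T^*_{X,-}$ and applying the derivation $\frac{1}{a!}\sum_\al \q_\al\pt/\pt\q_\al$ that already appears in \eqref{rhf74gf74hf8f0j390fj930}. Granting this identification, $\pt^{\{a,b\}}$ is cup product against the same class that defines $(\Theta(T^*_{Y,-}),-)_*^{\{a,b\}}$, and the desired containment of images follows (in fact with equality). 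Concretely, one chooses a cover $\Ufr$ on which the extension $0\to T^*_{B,-}\to T^*_{Y,-}\to T^*_{X,-}\to 0$ splits locally, represents $\Theta(T^*_{Y,-})$ by the \v Cech cocycle assembled from the off-diagonal parts of the transition data of $T^*_{Y,-}$, and verifies at cochain level that both $\pt^{\{a,b\}}(\xi)$ and $\big(\Theta(T^*_{Y,-}),\xi\big)_*^{\{a,b\}}$ evaluate to $\Theta_{\al\be}\wedge \xi_{\be\gam\cdots}$ in the appropriate hom-sheaf, after applying the contraction.

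The main obstacle will be justifying this cochain-level identification in full generality. The map in \eqref{rhf74gf74hf8f0j390fj930} is declared only at the level of cochains rather than sheaves, so one must verify that the contraction descends to cohomology compatibly with the connecting homomorphism; moreover, signs and multinomial coefficients arising from expanding $\wedge^a T^*_{X,-}$ have to be tracked with care. A natural strategy to keep bookkeeping manageable is induction on $a$: the case $a=1$ is exactly the content of Proposition \ref{fjbbvubiuvbuenoienienp}, and the passage from $a-1$ to $a$ amounts to an additional contraction by a single odd variable $\q_\al\pt/\pt\q_\al$, to which the $a=1$ identity can be applied fibrewise. Alternatively, since the conjecture asks only for containment of images rather than equality of maps, one can attempt the softer route of exhibiting a natural self-map $\phi:{\bf H}^{a,b}_p(Y,T^*_{Y,-})\to {\bf H}^{a,b}_p(Y,T^*_{Y,-})$ (induced by the same contraction) such that $\big(\Theta(T^*_{Y,-}),-\big)_*^{\{a,b\}}=\pt^{\{a,b\}}\circ \phi$, which would secure the containment without needing a point-for-point match between the two maps.
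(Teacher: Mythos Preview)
The statement you are addressing is labelled a \emph{Conjecture} in the paper and is not proved there in general. The paper establishes only the special case $a=1$, stated as Proposition \ref{fjbbvubiuvbuenoienienp} and argued in Appendix \ref{rfh97hfg9hf83jf09jf9j333}. So there is no proof of the full statement against which to compare your proposal; your outline is an attempt to go beyond what the paper claims.

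For the case $a=1$ the two approaches are close in spirit but organised differently. The paper does not identify $\pt^{\{1,j^\p-1\}}$ as a Yoneda pairing with the two-step extension class $\epsilon^{1,j^\p-1}$; instead it writes $\big(\Theta(T^*_{Y,-}),\nu\big)_*^{\{1,j^\p-1\}}=\tau_*\big(\Theta(T^*_{Y,-})\smile\nu\big)$, uses the identity $\Theta(T^*_{Y,-})=\dt({\bf 1}_{T^*_{X,-}})$ together with the standard compatibility of cup product with connecting maps to obtain $\tau_*\big(\Theta\smile\nu\big)=\pt\,\tau_*\big({\bf 1}\smile\nu\big)$, and supplies the required morphism of short exact sequences in Lemma \ref{bcurfgfg9f893hf8h30f30}. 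This gives only the containment of images, via the intermediate element $\nu^\p=\tau_*({\bf 1}\smile\nu)$, which is precisely your ``softer'' factorisation $\big(\Theta,-\big)_*=\pt\circ\phi$ specialised to $a=1$. Your more structural route, matching $\epsilon^{a,b}$ directly with the image of $\Theta(T^*_{Y,-})$, would if successful yield equality of maps rather than mere containment, and would make the inductive step you sketch cleaner.

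For general $a$ the obstacle you isolate is exactly the one the paper leaves open: the contraction in \eqref{rhf74gf74hf8f0j390fj930} is declared only at the level of cochains, and the paper explicitly remarks that it ``need not necessarily give a morphism of sheaves''. Neither your identification of $\epsilon^{a,b}$ with the image of $\Theta(T^*_{Y,-})$ nor the compatibility of the cochain-level contraction with the connecting map is verified in the text, so your proposal here is a genuine programme rather than a comparison point. Both your inductive strategy and the factorisation alternative are reasonable, but as written they remain at the level of a plan; the paper offers nothing further to measure them against.
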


\noindent
We establish Conjecture \ref{rhf74gf74gf983hf80309fj30} in the following special case.

\begin{PROP}\label{fjbbvubiuvbuenoienienp}
Conjecture \ref{rhf74gf74gf983hf80309fj30} is true in the case $a = 1$.
\end{PROP}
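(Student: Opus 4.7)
The plan is to prove the stronger statement that for $a=1$ the model class map agrees, up to sign, with the differential $\pt^{\{1,b\}}_p$, from which the containment of images is immediate.

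First I would unpack $\pt^{\{1,b\}}_p$ when $a=1$, so $j^\p=b+1$. In the defining composition \eqref{rf748f4hf93fjf030fj390} the second arrow is induced by the projection $F^{~b+1}_{b+1}\to F^{~b+1}_{b+1}/F^{~b+1}_{b+2}$; since $F^{~b+1}_{b+2}=(0)$ this projection is the identity, and so $\pt^{\{1,b\}}_p$ collapses to the connecting homomorphism
\[
\dt: H^p\big(Y,\mathcal Hom_{\Oc_Y}(T^*_{Y,(\pm)^{b+1}},\wedge^b T^*_{B,-}\boxtimes T^*_{X,-})\big) \lra H^{p+1}\big(Y,\mathcal Hom_{\Oc_Y}(T^*_{Y,(\pm)^{b+1}},\wedge^{b+1}T^*_{B,-})\big)
\]
of the short exact sequence
\begin{align}
0\lra \wedge^{b+1}T^*_{B,-}\lra F^{~b+1}_b\lra \wedge^b T^*_{B,-}\boxtimes T^*_{X,-}\lra 0.
\label{SESstar}
\end{align}

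Next I would identify the extension class of \eqref{SESstar}. The inclusion $T^*_{B,-}\subset T^*_{Y,-}$ and the wedge maps assemble into a commutative diagram with exact rows
\[
\xymatrix{
0\ar[r]& T^*_{B,-}\otimes\wedge^b T^*_{B,-}\ar[d]^{\wedge}\ar[r]& T^*_{Y,-}\otimes \wedge^b T^*_{B,-}\ar[d]\ar[r]& T^*_{X,-}\otimes \wedge^b T^*_{B,-}\ar@{=}[d]\ar[r]&0\\
0\ar[r]& \wedge^{b+1}T^*_{B,-}\ar[r]& F^{~b+1}_b\ar[r]& \wedge^b T^*_{B,-}\boxtimes T^*_{X,-}\ar[r]&0
}
\]
whose top row is the defining extension $0\to T^*_{B,-}\to T^*_{Y,-}\to T^*_{X,-}\to0$ tensored with $\wedge^b T^*_{B,-}$, and whose bottom row is \eqref{SESstar}. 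Exactness of the top row uses that $\wedge^b T^*_{B,-}$ is locally free. It follows that the extension class of \eqref{SESstar} is the image of $\Theta(T^*_{Y,-})\in\mathrm{Ext}^1_{\Oc_Y}(T^*_{X,-},T^*_{B,-})$ under tensoring by $\wedge^b T^*_{B,-}$ followed by push-out along the wedge $T^*_{B,-}\otimes\wedge^b T^*_{B,-}\to\wedge^{b+1}T^*_{B,-}$.

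Finally I would invoke the standard identification of the connecting homomorphism with cup product by the extension class. At the \v Cech-cochain level the cup product is realised by the pairing \eqref{fvbbvuibeviubeuvueoe}; tracing it through the two operations above produces exactly the cochain operation $(-,-)^{\{1,b\}}$, because in the $a=1$ case the derivation map of \eqref{rhf74gf74hf8f0j390fj930} collapses to the identity on $T^*_{X,-}$. This yields $\pt^{\{1,b\}}_p=\pm(\Theta(T^*_{Y,-}),-)_*^{\{1,b\}}$, and the conjecture follows. I expect the principal difficulty to be purely bookkeeping: carrying signs and the natural isomorphisms through the translation between the Ext-theoretic and \v Cech-cochain descriptions of the two maps.
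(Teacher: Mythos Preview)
Your proposal is correct and follows essentially the same route as the paper's proof in Appendix~\ref{rfh97hfg9hf83jf09jf9j333}: both hinge on the morphism of short exact sequences you wrote down (this is the content of the paper's Lemma~\ref{bcurfgfg9f893hf8h30f30}) together with the identification of the connecting homomorphism with cup product by the extension class (the paper cites Bredon's formula $\dt(u)\smile v=\dt^\p(u\smile v)$ applied at $u={\bf 1}_{\Mcl^{\p\p}}$, which is the same fact). Your framing is slightly more direct in that you argue the two maps agree up to sign, whereas the paper phrases the conclusion as $\tau_*\big(\Theta(T^*_{Y,-})\smile\nu\big)=\pt\,\tau_*\big({\bf 1}\smile\nu\big)$ and reads off only the containment of images; but the underlying ingredients are identical.
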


\noindent
We defer the proof of Proposition \ref{fjbbvubiuvbuenoienienp} to Appendix \ref{rfh97hfg9hf83jf09jf9j333}. A perhaps worthwhile observation to make here is that, while the motivation behind Conjecture \ref{rhf74gf74gf983hf80309fj30} involves supermanifolds, the statement is itself independent of supermanifolds. It only requires understanding what is meant by a `$gt$-model', as given in Definition \ref{rfh74gf9hf830fj93jf}, and is therefore potentially interesting in its own right. Proposition \ref{rfh74fg874f93hf83j0f93} then allows us to return to the setting of supermanifolds, if desired. This is illustrated in the following consequence.

\begin{COR}\label{rfg64gf84gf794hf98h}
Let $(Y, T^*_{Y, -})$ be a $gt$-model with non-trivial model class $\Theta(T^*_{Y, -})$. Then, for any $j^\p$, let $\nu\in {\bf H}^{1, j^\p-1}_1(Y, T^*_{Y, -})$ be such that $\big(\Theta(T^*_{Y, -}), \nu\big)_*^{\{1, j^\p-1\}}\neq0$. Then there exists a class $\nu^\p \in  {\bf H}^{1, j^\p-1}_1(Y, T^*_{Y, -})$ which cannot be realised as a secondary obstruction to splitting any $gtm$-family of supermanifolds.
\end{COR}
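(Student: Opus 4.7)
The plan is to obtain the class $\nu^\p$ as a witness inside $\mbox{img}~\pt^{\{1, j^\p-1\}}_1$ supplied by Proposition~\ref{fjbbvubiuvbuenoienienp}, then invoke Proposition~\ref{rfh74fg874f93hf83j0f93} to conclude that this witness cannot arise as a secondary obstruction. In more detail, the case $a = 1$ of Conjecture~\ref{rhf74gf74gf983hf80309fj30} (i.e., Proposition~\ref{fjbbvubiuvbuenoienienp}) yields the containment
\[
\mbox{img}~\big(\Theta(T^*_{Y, -}), -\big)^{\{1, j^\p -1\}}_{*} \subseteq \mbox{img}~\pt^{\{1, j^\p-1\}}_1
\]
inside ${\bf H}^{0, j^\p}_2(Y, T^*_{Y, -})$. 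The hypothesis that $\big(\Theta(T^*_{Y, -}), \nu\big)^{\{1, j^\p-1\}}_{*}\neq 0$ then guarantees that $\mbox{img}~\pt^{\{1, j^\p-1\}}_1$ contains a non-zero element.

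Next I would extract from this a preimage: there exists $\nu^\p \in {\bf H}^{1, j^\p-1}_1(Y, T^*_{Y, -})$ with $\pt^{\{1, j^\p-1\}}_1(\nu^\p) = \big(\Theta(T^*_{Y, -}), \nu\big)^{\{1, j^\p-1\}}_{*}$, hence $\pt^{\{1, j^\p-1\}}_1(\nu^\p) \neq 0$. By the contrapositive of Proposition~\ref{rfh74fg874f93hf83j0f93}, any such class cannot be identified with a secondary obstruction to splitting a $gtm$-family with total space model $(Y, T^*_{Y, -})$, which is the conclusion we want.

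No step here is technically difficult; the proof is a short chain of implications assembling Proposition~\ref{fjbbvubiuvbuenoienienp} with Proposition~\ref{rfh74fg874f93hf83j0f93}. The only minor subtlety worth flagging explicitly in the write-up is a consistency check that the cohomological degrees match up: the pairing $(\Theta, \nu)^{\{1, j^\p-1\}}_{*,1}$ lands in ${\bf H}^{0, j^\p}_2$, which is exactly the target of $\pt^{\{1, j^\p-1\}}_1$, so the containment of images is an honest statement of subsets of the same group and the preimage $\nu^\p$ lives in the same source ${\bf H}^{1, j^\p-1}_1$ as $\nu$. The non-triviality assumption on the model class $\Theta(T^*_{Y, -})$ is used only implicitly, to ensure that the hypothesis $(\Theta(T^*_{Y, -}), \nu)^{\{1, j^\p-1\}}_{*} \neq 0$ is not vacuous.
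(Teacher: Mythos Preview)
Your proof is correct and matches the paper's own argument essentially line for line: both use Proposition~\ref{fjbbvubiuvbuenoienienp} to place the nonzero element $(\Theta(T^*_{Y,-}),\nu)_*^{\{1,j^\p-1\}}$ inside $\img\,\pt^{\{1,j^\p-1\}}_1$, extract a preimage $\nu^\p$ with $\pt^{\{1,j^\p-1\}}_1(\nu^\p)\neq 0$, and then invoke the contrapositive of Proposition~\ref{rfh74fg874f93hf83j0f93}. Your explicit degree check and remark on the role of the non-triviality of $\Theta(T^*_{Y,-})$ are helpful additions not spelled out in the paper.
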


\begin{proof}
Generally, given vector spaces $V, W$ and maps $f, g: V\ra W$ such that $\img~f\subseteq \img~g$, this means, for any $a\in A$, that $f(a) = g(a^\p)$ for some $a^\p$. Hence if $f(a)\neq0$ then $g(a^\p)\neq0$. The present statement now follows by considering the case where $f = \big(\Theta(T^*_{Y, -}), -\big)_*^{\{1, j^\p-1\}}$, $g = \pt^{\{1, j^\p-1\}}$ and applying Proposition \ref{rfh74fg874f93hf83j0f93} and \ref{fjbbvubiuvbuenoienienp}.
\end{proof}

\begin{REM}
\emph{From the proof of Proposition \ref{fjbbvubiuvbuenoienienp} the classes $\nu$ and $\nu^\p$ in Corollary \ref{rfg64gf84gf794hf98h} are related through the cup product. That is, in the notation of the proof, we have the relation $\nu^\p = \tau_*\big(\Theta(T^*_{Y, -})\smile \nu\big)$. Hence we could further deduce, if $\nu$ is such that $\Theta(T^*_{Y, -})\smile \nu = 0$, then $\pt^{\{1, j^\p-1\}}\nu = 0$ leading subsequently to a containment of kernels, complementarily to Proposition \ref{fjbbvubiuvbuenoienienp}.}
\end{REM}

\section{Obstruction Complexes: Questions and Conjectures}
\label{rygf8g478gf794hf893h893h03333333}

\noindent
The deliberations so far in this part of the article, in conjunction with previous work by the author, prompts some natural, general questions which we present here.

\subsection{The Obstruction Complex of a Model}
 Firstly, to recall, in \cite{BETTAQ} a complex, called the `obstruction complex' was derived for any model $(X, T^*_{X, -})$. In the language of \cite{BETTOBSTHICK}, this complex concerned the problem of integrating thickenings to supermanifolds. Denote the obstruction complex of a model $(X, T^*_{X, -})$ by $\mathrm{Ob}(X, T^*_{X, -})$. It is a bi-graded complex with a single differential. The $(p, q)$-th term in $\mathrm{Ob}(X, T^*_{X, -})$ is $H^p\big(X, \Ac_{T^*_{X, -}}^{(q)}\big)$, where $\Ac_{T^*_{X, -}}^{(q)}$ is the $q$-th obstruction sheaf; and the differential is a map  $\dt^{p, q}: \mathrm{Ob}^{p, q}(X, T^*_{X, -}) \ra \mathrm{Ob}^{p+1, q+1}(X, T^*_{X, -})$ and therefore suggests this complex can be derived from some larger complex. This prompts:
 
\begin{QUE}
To any model $(X, T^*_{X. -})$, does there exist some double complex whose total complex is the obstruction complex of $(X, T^*_{X. -})$?
\end{QUE}

\noindent
In the first part of this article we looked at $gt$-models over a connected, Stein base $B$. Let $(Y, T^*_{Y, -})$ be such a model with fiber $(X, T^*_{X, -})$. Recall that $Y = X\times B$ and $T^*_{Y,-} = p_X^*T^*_{X, -}$. Since the obstruction class to splitting the fiber of any $gtm$-family with total space model $(Y, T^*_{Y,-})$ is essentially constant by Theorem \ref{fbjhvvbennciuebcuecoe}, we propose:

\begin{CONJ}\label{kfnvjkfbvhjbvbeuboe}
The obstruction complexes $\mathrm{Ob}(X, T^*_{X, -})$ and $\mathrm{Ob}(Y,p_X^* T^*_{X, -})$ are chain homotopic.
\end{CONJ}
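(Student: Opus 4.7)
The plan is to construct an explicit chain homotopy equivalence in both directions. The natural candidate in the forward direction is pullback along $p_X : Y = X \times B \to X$, composed with the inclusion $p_X^* \Ac^{(q)}_{T^*_{X, -}} \hookrightarrow \Ac^{(q)}_{p_X^* T^*_{X, -}}$ provided by the direct-sum decomposition \eqref{fbiurbvibiuveuvueibcebuyce}:
\[
f : \mathrm{Ob}^{p, q}(X, T^*_{X, -}) \lra H^p\big(Y, p_X^*\Ac^{(q)}_{T^*_{X, -}}\big) \hookrightarrow \mathrm{Ob}^{p, q}\big(Y, p_X^* T^*_{X, -}\big).
\]
The first thing I would verify is that $f$ is a chain map with respect to $\dt$, which should follow from naturality of the obstruction-complex differential of \cite{BETTAQ} under pullback of models, checked on \v Cech representatives.

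For the reverse map $g$, pick a base-point $b_0 \in B$. Using the K\"unneth formula together with Stein acyclicity of $B$, exactly as in the proof of Lemma \ref{fnkbvhjbvjhdkndkln}, I would obtain the canonical decomposition
\[
\mathrm{Ob}^{p, q}\big(Y, p_X^* T^*_{X, -}\big) \cong \mathrm{Ob}^{p, q}(X, T^*_{X, -}) \otimes H^0(B, \Oc_B) \oplus \e_q\, H^p(X, \Oc_X) \otimes H^0(B, T_B),
\]
where $\e_q$ is $1$ for $q$ even and $0$ otherwise. Projection onto the first summand followed by evaluation $\mathrm{ev}_{b_0} : H^0(B, \Oc_B) \to \Cbb$ then defines $g$, which coincides with restriction along the fiber embedding $X \cong X_{b_0} \hookrightarrow Y$ and hence is a chain map by the same naturality argument. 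The identity $g \circ f = \mathrm{id}_{\mathrm{Ob}(X, T^*_{X, -})}$ is immediate.

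The heart of the proof lies in producing a chain homotopy $h$ with $\mathrm{id} - f \circ g = \dt h + h \dt$. The kernel of $g$ has two pieces: the augmentation-ideal piece $\mathrm{Ob}^{p, q}(X, T^*_{X, -}) \otimes \mathfrak{m}_{b_0}$, where $\mathfrak{m}_{b_0} \subset H^0(B, \Oc_B)$ is the maximal ideal at $b_0$; and the tangent-sheaf piece supported only in even $q$. On the first piece $\dt$ acts as $\dt_X \otimes \mathrm{id}_{\mathfrak{m}_{b_0}}$ by naturality, and so a choice of linear section of $\mathrm{ev}_{b_0}$ yields a straightforward contraction of the associated subcomplex. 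On the $T_B$-piece the homotopy must absorb the parity shift from even $q$ to odd $q+1$ in $\dt$; I would build it by mimicking the Cartan formula $\mathcal L_\s = \drm\,\iota_\s + \iota_\s \drm$ with $\s$ ranging over $H^0(B, T_B)$, so that the action of $T_B$ on $\mathrm{ev}_{b_0}$ is exhibited as a coboundary of the complex.

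The principal obstacle is this last step: the $T_B$-summand appears only in even degrees $q$, forcing $\dt^{p, q}$ to mix it with the main $H^0(B, \Oc_B)$-summand in neighbouring degrees of the complex. Carrying the construction out rigorously requires returning to the explicit \v Cech formula for $\dt$ in \cite{BETTAQ} and verifying that, on the K\"unneth factors, it has the Cartan-formula structure anticipated above. Assuming this can be confirmed, the $T_B$-contribution becomes formally cancellable, and the maps $f$, $g$ together with the resulting $h$ realise the desired chain homotopy equivalence $\mathrm{Ob}(X, T^*_{X, -}) \simeq \mathrm{Ob}(Y, p_X^* T^*_{X, -})$.
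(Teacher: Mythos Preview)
The statement you are attempting to prove is listed in the paper as a \emph{conjecture} (Conjecture~\ref{kfnvjkfbvhjbvbeuboe}), appearing in the closing section on open questions; the paper offers no proof or proof sketch of it. There is therefore nothing in the paper against which to compare your argument.

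Assessing your proposal on its own merits: the strategy of defining $f$ via pullback and $g$ via K\"unneth-plus-evaluation is natural, and the identification $g\circ f=\mathrm{id}$ is indeed immediate. The genuine gap, which you yourself flag, lies in the construction of the homotopy $h$ on the $T_B$-summand. You propose that the differential $\dt$ of \cite{BETTAQ} should exhibit a Cartan-type structure $\mathcal L_\s=\drm\,\iota_\s+\iota_\s\drm$ on the K\"unneth factors, but this is asserted rather than verified, and the paper does not supply the explicit \v Cech formula you would need. Moreover, your earlier step---that $f$ and $g$ are chain maps---already presupposes naturality of $\dt$ under pullback of models and under restriction to a fibre; this too is plausible but not established here and would require going back to the construction in \cite{BETTAQ}.

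In short, what you have written is a reasonable outline of how one might attack the conjecture, with the correct identification of where the difficulty lies, but it is not a proof: the decisive computation (the behaviour of $\dt$ on the $H^0(B,T_B)$-piece and its interaction with the $H^0(B,\Oc_B)$-piece across parities) remains to be carried out, and until it is, the statement retains the status the paper gives it.
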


\subsection{The Obstruction Complex of $gt$-Models}
We turn now to $gt$-models over superspace bases. Let $(Y, T^*_{Y, -})$ be a $gt$-model over $(B, T^*_{B, -})$ with fiber $(X, T^*_{X, -})$. We assume $B$ is connected and Stein. Then we have the following complexes associated to $(Y, T^*_{Y, -})$, all related to an obstruction problem:
\begin{enumerate}[$\bt$]
	\item the obstruction complex of the total space model $\mathrm{Ob}(Y, T^*_{Y, -})$;
	\item the obstruction complex of the underlying model $\mathrm{Ob}(Y,p_X^* T^*_{X, -})$;
	\item the obstruction complex of the fiber $\mathrm{Ob}(X, T^*_{X, -})$ and;
	\item the secondary obstruction complex, denoted $\mathrm{Ob}^\p(Y, T^*_{Y, -})$.
\end{enumerate}
It is natural to then wonder if there are any interesting relations between the above complexes, seeing as they can all be `derived' from a single model. The compatibility diagram in Proposition \ref{fjdkbvjvbebvuibvevbeo}, prompts the following question:

\begin{QUE}\label{6g84fg799fh308hf039}
Does there exist a compatibility diagram between the complexes associated to a $gt$-model $(Y, T^*_{Y, -})$, in analogy with that in Proposition \ref{fjdkbvjvbebvuibvevbeo}?
\end{QUE}

\noindent
A potential resolution of Question \ref{6g84fg799fh308hf039} above might be of the following kind. If we take Conjecture \ref{kfnvjkfbvhjbvbeuboe} as given, we may identify $\mathrm{Ob}(Y,p_X^* T^*_{X, -})$ and $\mathrm{Ob}(X, T^*_{X, -})$ up to homotopy. Then, to generalise the exact column in Proposition \ref{fjdkbvjvbebvuibvevbeo}, a compatibility between obstruction complexes could mean: \emph{there exists an exact sequence of complexes $\mathrm{Ob}^\p(Y, T^*_{Y, -})\ra \mathrm{Ob}(Y, T^*_{Y, -})\ra \mathrm{Ob}(X, T^*_{X, -})$, up to homotopy}.

\appendix
\numberwithin{equation}{section}

\section{Proof of Proposition $\ref{fjbbvubiuvbuenoienienp}$}
\label{rfh97hfg9hf83jf09jf9j333}

\noindent
In the case where $a= 1$ the relevant exact sequence coming from the filtration $F^{j^\p}$ on $T^*_{Y, -}$ is $0\ra F^{~j^\p}_{j^\p} \ra F^{~j^\p}_{j^\p-1} \ra \wedge^{j^\p-1}T^*_{B, -}\boxtimes T^*_{X, -}\ra0$. Recall from \eqref{fjbbvrbviubvouevoineiov} that $F^{~j^\p}_{j^\p} = \wedge^{j^\p}T^*_{B, -}$. We want to compare $\pt^{\{1, j^\p-1\}}$ and $(\Theta(T^*_{Y, -}), -)^{\{1, j^\p-1\}}_*$ which are maps:
\begin{align*}
%\pt^{\{1, j^\p-1\}}, \langle({\bf 1}, -), \Theta(T^*_{Y, -})\rangle:
{\bf H}^{1, j^\p-1}_1(Y, T^*_{Y, -}) = H^1\big(Y, &\mathcal Hom_{\Oc_Y}(T^*_{Y, (\pm)^{j^\p}}, T^*_{X, -}\boxtimes \wedge^{j^\p-1}T^*_{B, -})\big)
\\
&\lra
H^2\big(Y, \mathcal Hom_{\Oc_Y}(T^*_{Y, (\pm)^{j^\p}}, \wedge^{j^\p}T^*_{B, -})\big)
=
{\bf H}^{0, j^\p}_2(Y, T^*_{Y, -})
\end{align*}
Inspection of the map $(\Theta(T^*_{Y, -}), -)^{\{1, j^\p-1\}}_*$ reveals that, by its construction, it will coincide with
%Concerning $\langle({\bf 1}, -), \Theta(T^*_{Y, -})\rangle^{\{1, j^\p-1\}}$, the map $({\bf 1}, -)$ is the identity and so, by construction, $\langle -, \Theta(T^*_{Y, -})\rangle$ will coincide with 
the image of the cup-product-with-$\Theta(T^*_{Y, -})$ in ${\bf H}^{0, j^\p}_2(Y, T^*_{Y, -})$. That is,
\begin{align}
(\Theta(T^*_{Y, -}),\nu)^{\{1, j^\p-1\}}_*
%\langle({\bf 1}, x), \Theta(T^*_{Y, -})\rangle^{\{1, j^\p-1\}} 
= \tau_*\big(\Theta(T^*_{Y, -})\smile \nu\big).
\label{fjbjvyueviubonoinioeipe}
\end{align}
for any $\nu$. The map $\tau_*$ will be described after the following relevant digression on the characterisation of cup products from \cite{BREDSHEAF}. If $0\ra \Ac^\p\ra \Ac\ra \Ac^{\p\p}\ra0$ is a short exact sequence of sheaves of modules on a space $Y$ and $\Bc$ is any flat module on $Y$, i.e., that $(-)\otimes \Bc$ defines an exact functor, then the following diagram commutes:
\begin{align}
\xymatrix{
\ar[dd] H^p(Y, \Ac^{\p\p})\otimes H^q(Y, \Bc) \ar[r] & H^{p+q}(Y, \Ac^{\p\p}\otimes \Bc)\ar[d]
\\
& H^{p+q+1}(Y, \Ac^\p\otimes \Bc) \ar@{=}[d]
\\
H^{p+1}(Y, \Ac^\p)\otimes H^q(Y, \Bc) \ar[r]  &  H^{p+q+1}(Y, \Ac^\p\otimes \Bc)
}
&&
\xymatrix{
u\otimes v \ar@{|->}[dd]\ar@{|->}[r] & u\smile v\ar@{|->}[d]
\\
& \dt^\p(u\smile v)\ar@{=}[d]
\\
\dt(u)\otimes v\ar@{|->}[r] &\dt(u)\smile v
}
\label{rfh49f984hf8hf0833}
\end{align}
where $\smile$ denotes the cup product and $\dt, \dt^\p$ are the following respective boundary maps on cohomology, $\dt : H^p(Y, \Ac^{\p\p})\ra H^{p+1}(Y, \Ac^\p)$ and $\dt^\p : H^{p+q}(Y, \Ac^{\p\p}\otimes \Bc)\ra H^{p+q+1}(Y, \Ac^\p\otimes \Bc)$. This is the content of \cite[Theorem 7.1(b), p. 57]{BREDSHEAF}. We want to specialise now to the following case, which we firstly present abstractly. Consider a short exact sequence of sheaves of flat modules $0\ra \Mcl^\p\ra\Mcl\ra\Mcl^{\p\p} \ra0$ on $Y$. Then $\mathcal Hom_{\Oc_Y}(\Mcl^{\p\p}, -)$ will be exact. Now set $\Ac^\p = \mathcal Hom_{\Oc_Y}(\Mcl^{\p\p}, \Mcl^\p)$, $\Ac^{\p\p} = \mathcal Hom_{\Oc_Y}(\mathcal M^{\p\p}, \Mcl^{\p\p})$. With some flat module $\Bc$ we obtain the following diagram for $p = 0$ and any $q$:
\begin{align}
\xymatrix{
H^0\big(Y, \mathcal Hom_{\Oc_Y}(\Mcl^{\p\p}, \Mcl^{\p\p})\big) \otimes H^q\big(Y, \Bc\big)
\ar[d]_{\dt\otimes 1}\ar[r] 
& 
H^q\big(Y,\mathcal Hom_{\Oc_Y}(\Mcl^{\p\p}, \Mcl^{\p\p})\otimes \Bc\big) \ar[d]^{\dt^\p}
\\
H^1\big(Y,\mathcal Hom_{\Oc_Y}(\Mcl^{\p\p},\Mcl^\p)\big)
\otimes 
H^q\big(Y, \Bc\big)
 \ar[r] & 
 H^{q+1}(Y, \mathcal Hom_{\Oc_Y}(\Mcl^{\p\p},\Mcl^\p)\otimes \Bc\big)
}
\end{align}
The extension class of $\Mcl$ coincides with $\dt({\bf 1}_{\Mcl^{\p\p}})\in H^1\big(Y,\mathcal Hom_{\Oc_Y}(\Mcl^{\p\p},\Mcl^\p)\big)$. Denote the extension class by $\Theta(\Mcl)$. The cup product formula in \eqref{rfh49f984hf8hf0833} gives:
\begin{align}
\Theta(\Mcl)\smile \nu = \dt^\p({\bf1}_{\Mcl^{\p\p}}\smile \nu)
\label{77757h98hf8h3fh830}
\end{align}
for any $x\in H^q(Y,\Bc)$. Now consider another short exact sequence $0\ra \Nc^\p\ra \Nc\ra \Nc^{\p\p}\ra0$. Suppose $\Bc$ is such that $\mathcal Hom_{\Oc_Y}(\Mcl^{\p\p}, -)\otimes \Bc$ defines a morphism from the extension defining $\Mcl$ to that defining $\Nc$, i.e., that we have a morphism of exact sequences:
\begin{align}
\xymatrix{
 \ar[d] \mathcal Hom_{\Oc_Y}(\Mcl^{\p\p}, \Mcl^\p)\otimes \Bc \ar[r] & \ar[d]\mathcal Hom_{\Oc_Y}(\Mcl^{\p\p}, \Mcl) \otimes \Bc\ar[r] &\ar[d] \mathcal Hom_{\Oc_Y}(\Mcl^{\p\p}, \Mcl^{\p\p})\otimes \Bc
\\
\Nc^\p\ar[r] & \Nc\ar[r]  & \Nc^{\p\p}
}
\label{rfh94hf89hf80f093}
\end{align}
Then by naturality of cohomology we can continue the diagram in \eqref{rfh49f984hf8hf0833} one step to the right to get the following commutative diagram:
\begin{align}
\xymatrix{
H^q\big(Y,\mathcal Hom_{\Oc_Y}(\Mcl^{\p\p}, \Mcl^{\p\p})\otimes \Bc\big) \ar[d]_{\dt^\p}\ar[rr] && H^q\big(Y, \Nc^{\p\p}\big)\ar[d]^{\pt}
\\
H^{q+1}(Y, \mathcal Hom_{\Oc_Y}(\Mcl^{\p\p},\Mcl^\p)\otimes \Bc\big)\ar[rr] 
&&
H^{q+1}\big(Y, \Nc^\p\big)
}
\label{kdjghghrrejekkdldkkff}
\end{align}
Denote by $\tau_*$ the horizontal maps in \eqref{kdjghghrrejekkdldkkff} above. Then from the identity in \eqref{77757h98hf8h3fh830} along with commutativity of \eqref{kdjghghrrejekkdldkkff} we conclude:
\begin{align}
\tau_*\big(\Theta(\Mcl)\smile \nu\big)
&=
\tau_*\dt^\p\big({\bf1}_{\Mcl^{\p\p}}\smile \nu\big)
\notag
\\
&=
\pt \tau_*\big({\bf1}_{\Mcl^{\p\p}}\smile \nu\big).
\label{fjbjfbvybuveoioeeee}
\end{align}
Thus we see above how the cup product can be related to the boundary map $\pt$ of an arbitrary short exact sequence equipped and a morphism in \eqref{rfh94hf89hf80f093}. To complete the proof of this proposition now, set:
\begin{enumerate}[$\bt$]
	\item $\Mcl^\p = T^*_{B, -}$; 
	\item $\Mcl = T^*_{Y, -}$ and;
	\item $\Mcl^{\p\p} = T^*_{X, -}$;
	\item $\Nc^\p = \mathcal Hom_{\Oc_Y}\big(T^*_{X, \pm}, F^{~j^\p}_{j^\p}\big)$;
	\item $\Nc = \mathcal Hom_{\Oc_Y}\big(T^*_{X, \pm}, F^{~j^\p}_{j^\p-1}\big)$;
	\item $\Nc^{\p\p} =  \mathcal Hom_{\Oc_Y}\big(T^*_{X, \pm}, T^*_{X, -}\boxtimes \wedge^{j^\p-1}T^*_{B, -}\big)$
\end{enumerate}
and take $\Bc = \Nc^{\p\p} = \mathcal Hom_{\Oc_Y}\big(T^*_{X, \pm}, T^*_{X, -}\boxtimes \wedge^{j^\p-1}T^*_{B, -}\big)$. 

\begin{LEM}\label{bcurfgfg9f893hf8h30f30}
With $(\Mcl^\p, \Mcl, \Mcl^{\p\p})$; $(\Nc^\p, \Nc, \Nc^{\p\p})$ and $\Bc$ so chosen, there will exist a morphism of exact sequences as in \eqref{rfh94hf89hf80f093}.
\end{LEM}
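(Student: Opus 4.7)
The plan is to construct the three vertical arrows $\phi'$, $\phi$, $\phi''$ in \eqref{rfh94hf89hf80f093} explicitly and then verify that the two resulting squares commute. All three arrows will be built from the natural composition pairing, a specialisation of \eqref{rjfbigf73gf73hf3fj3fpoj}: for any $\Oc_Y$-module $M$, setting $N = \wedge^{j^\p-1}T^*_{B, -}$, one has
\[
\mathcal Hom_{\Oc_Y}(T^*_{X, -}, M)\otimes \mathcal Hom_{\Oc_Y}(T^*_{X, \pm}, T^*_{X, -}\boxtimes N)
\lra
\mathcal Hom_{\Oc_Y}(T^*_{X, \pm}, M\boxtimes N),
\]
sending $f\otimes g \mapsto (f\otimes 1_N)\circ g$. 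Exactness of the top row of \eqref{rfh94hf89hf80f093} is immediate from local freeness of $T^*_{X, -}$ (making $\mathcal Hom_{\Oc_Y}(T^*_{X, -},-)$ exact on the sequence $0\to T^*_{B, -}\to T^*_{Y, -}\to T^*_{X, -}\to 0$) and local freeness of $\Bc$.

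For $\phi''$ I take $M = T^*_{X, -}$, in which case the pairing above lands directly in $\Nc^{\p\p}$. For $\phi'$ I take $M = T^*_{B, -}$ and post-compose with the wedge product $T^*_{B, -}\boxtimes N\to \wedge^{j^\p}T^*_{B, -}$, identifying the target with $F^{~j^\p}_{j^\p}$ via \eqref{fjbbvrbviubvouevoineiov} so that the composite lands in $\Nc^\p$. For the middle arrow $\phi$ I take $M = T^*_{Y, -}$ and post-compose with the multiplication map
\[
T^*_{Y, -}\boxtimes \wedge^{j^\p-1}T^*_{B, -}\lra F^{~j^\p}_{j^\p-1}\subset \wedge^{j^\p}T^*_{Y, -},
\]
obtained from the wedge product in $\wedge^\bullet T^*_{Y, -}$ after the inclusion $T^*_{B, -}\hookrightarrow T^*_{Y, -}$. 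That the image lies in $F^{~j^\p}_{j^\p-1}$ (rather than merely in $\wedge^{j^\p}T^*_{Y, -}$) is exactly the wedge-product realisation of the filtration $F^{~j^\p}_\bullet$, so the composite lands in $\Nc$ as required.

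Commutativity of the two squares reduces to two essentially tautological compatibilities between the filtration $F^{~j^\p}$ and the extension $0\to T^*_{B, -}\to T^*_{Y, -}\to T^*_{X, -}\to 0$. The left square commutes because wedging elements of $T^*_{B, -}\boxtimes N$ into $F^{~j^\p}_{j^\p}$ agrees with wedging into $F^{~j^\p}_{j^\p-1}$ performed after the inclusion $T^*_{B, -}\hookrightarrow T^*_{Y, -}$; the right square commutes because, under the identification $F^{~j^\p}_{j^\p-1}/F^{~j^\p}_{j^\p}\cong \wedge^{j^\p-1}T^*_{B, -}\boxtimes T^*_{X, -}$ from \eqref{fjbbvrbviubvouevoineiov}, the associated graded projection is induced by $T^*_{Y, -}\to T^*_{X, -}$. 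The only genuinely non-trivial point in the whole plan is the containment-in-filtration for $\phi$; everything else is naturality of composition and the defining properties of $F^{~j^\p}$.
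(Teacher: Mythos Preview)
Your proposal is correct and takes essentially the same approach as the paper: both build the three vertical maps from the composition pairing \eqref{rjfbigf73gf73hf3fj3fpoj} followed (where needed) by the wedge product, with the key step for the middle arrow being the identification of $F^{~j^\p}_{j^\p-1}$ as the image of the wedge-multiplication map $T^*_{Y,-}\otimes\wedge^{j^\p-1}T^*_{B,-}\to\wedge^{j^\p}T^*_{Y,-}$. Your account is in fact slightly more careful than the paper's (which appears to swap the labels ``composition'' and ``composition and wedge product'' on the two outer maps), and you make the commutativity of the two squares explicit where the paper leaves it to the reader.
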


\noindent
Assuming Lemma \ref{bcurfgfg9f893hf8h30f30} we can now make sense of the equality in \eqref{fjbjvyueviubonoinioeipe}. The proof of the present Proposition \ref{fjbbvubiuvbuenoienienp} now follows from \eqref{fjbjfbvybuveoioeeee}.
\qed
\\\\

\noindent
We will give a proof of Lemma \ref{bcurfgfg9f893hf8h30f30} below in order to complete the proof of Proposition \ref{fjbbvubiuvbuenoienienp}.
\\

\noindent
\emph{Proof of Lemma \ref{bcurfgfg9f893hf8h30f30}}.
We need to construct the morphism of short exact in \eqref{rfh94hf89hf80f093}. Observe that we immediately have the following maps:
\begin{align}
\begin{array}{ll}
\mathcal Hom_{\Oc_Y} (\Mcl^{\p\p}, \Mcl^\p) \otimes \Bc \lra \Nc^\p& \mbox{composition}
\\
\mathcal Hom_{\Oc_Y} (\Mcl^{\p\p}, \Mcl^{\p\p}) \otimes \Bc \lra \Nc^{\p\p} & \mbox{composition and wedge product}
\end{array}
\label{rhf84hf89fh83hf3h0fh3}
\end{align}
It remains to define a morphism $\mathcal Hom_{\Oc_Y}(\Mcl^{\p\p}, \Mcl) \otimes \Bc \ra \Nc$. Here we will use the fact that $\Nc = F^{~j^\p}_{j^\p-1}$ is the $(j^\p-1)$-th filtered piece of $\wedge^{j^\p}T^*_{Y, -}$. It can be explicitly described as follows. With $T^*_{Y,-}$ defined by the extension $T^*_{B, -}\ra T^*_{Y, -}\ra T^*_{X, -}$, the inclusion $\wedge^{j-1}T^*_{B, -}\subset \wedge^{j^\p-1}T^*_{Y, -}$ and the wedge product $\wedge^{j^\p-1}T^*_{Y, -}\otimes T^*_{Y, -}
\ra
\wedge^{j^\p}T^*_{Y, -}$ gives the following map
\[
\wedge^{j^\p-1}T^*_{B, -} \otimes T^*_{Y, -}\lra \wedge^{j^\p-1}T^*_{Y, -}\otimes T^*_{Y, -}
\lra
\wedge^{j^\p}T^*_{Y, -}.
\]
We can identify $F^{~j^\p}_{j^\p-1} = \img\{\wedge^{j^\p-1}T^*_{B, -} \otimes T^*_{Y, -}\lra \wedge^{j^\p}T^*_{Y, -}\}$. Now with the wedge product $T^*_{B, -}\otimes \wedge^{j^\p-1}T^*_{B, -}\ra \wedge^{j^\p}T^*_{B, -}$ we obtain a morphism of exact sequences,
\[
\xymatrix{
0\ar[r] & \ar[d]T^*_{B, -}\otimes \wedge^{j^\p-1}T^*_{B, -}\ar[r] & \ar[d] T^*_{Y, -}\otimes \wedge^{j^\p-1}T^*_{B, -}\ar[r] &\ar@{=}[d] T^*_{X, -}\otimes \wedge^{j^\p-1}T^*_{B, -}\ar[r] & 0
\\
0 \ar[r] & \wedge^{j^\p}T^*_{B, -} \ar[r] & F^{~j^\p}_{j^\p-1}\ar[r] & T^*_{X, -}\otimes \wedge^{j^\p-1}T^*_{B, -}\ar[r] & 0
}
\]
As can be checked, the map $T^*_{Y, -}\otimes \wedge^{j^\p-1}T^*_{B, -}\ra F^{~j^\p}_{j^\p-1}$ above now gives the desired map $\mathcal Hom_{\Oc_Y}(\Mcl^{\p\p}, \Mcl)\otimes \Bc\ra \Nc$ and, with the maps in \eqref{rhf84hf89fh83hf3h0fh3}, we obtain the desired morphism of exact sequences.
\qed

\newpage
%\section*{~}
%\thispagestyle{empty}

\addtocontents{toc}{\vspace{\normalbaselineskip}}
\bibliographystyle{alpha}
\bibliography{Bibliography}

\begin{thebibliography}{Bet19b}

\bibitem[Ber87]{BER}
F.~A. Berezin.
\newblock {\em Introduction to Superanalysis}.
\newblock D. Reidel Publishing Company, 1987.

\bibitem[Bet16]{BETTPHD}
K.~Bettadapura.
\newblock {\em Obstruction Theory for Supermanifolds and Deformations of
  Superconformal Structures}.
\newblock PhD thesis, The Australian National University,
  \href{http://hdl.handle.net/1885/110239}{hdl.handle.net/1885/110239},
  December 2016.

\bibitem[Bet18a]{BETTEMB}
K.~Bettadapura.
\newblock Embeddings of complex supermanifolds.
\newblock Available at:
  \href{http://arxiv.org/abs/1806.02763}{arXiv:1806.02763} [math.AG], 2018.

\bibitem[Bet18b]{BETTHIGHOBS}
K.~Bettadapura.
\newblock Higher obstructions of complex supermanifolds.
\newblock {\em SIGMA}, 14(094), 2018.

\bibitem[Bet18c]{BETTSRS}
K.~Bettadapura.
\newblock On the problem of splitting deformations of super {Riemann} surfaces.
\newblock {\em Lett. Math. Phys.}, 2018.

\bibitem[Bet19a]{BETTOBSTHICK}
K.~Bettadapura.
\newblock Obstructed {Thickenings} and {Supermanifolds}.
\newblock {\em J. Geom. and Phys.}, 139:25--49, 2019.

\bibitem[Bet19b]{BETTAQ}
K.~Bettadapura.
\newblock Sheaves of {AQ} normal series and supermanifolds.
\newblock Available at
  \href{https://arxiv.org/abs/1903.03892}{arXiv:1903.03892} [math.AG], 2019.

\bibitem[Bre97]{BREDSHEAF}
G.~Bredon.
\newblock {\em Sheaf Theory (Second Edition)}.
\newblock Springer-Verlag, 1997.

\bibitem[Bry08]{BRY}
J.~Brylinski.
\newblock {\em Loop Spaces, Characteristic Classes and Geometric Quantization}.
\newblock Modern Birkhaeuser Classics, 2008.

\bibitem[DM99]{QFAS}
P.~Deligne and J.~W. Morgan.
\newblock {\em Quantum Fields and Strings: A course for Mathematicians},
  volume~1, chapter Notes on Supersymmetry (following Joseph Bernstein), pages
  41--97.
\newblock American Mathematical Society, Providence, 1999.

\bibitem[DW14]{DW2}
R.~Donagi and E.~Witten.
\newblock Super {Atiyah} classes and obstructions to splitting of supermoduli
  space.
\newblock available at: \href{http://arxiv.org/abs/1404.6257}{arXiv:1404.6257}
  [hep-th], 2014.

\bibitem[DW15]{DW1}
R.~Donagi and E.~Witten.
\newblock Supermoduli space is not projected.
\newblock In {\em Proc. Symp. Pure Math.}, volume~90, pages 19--72, 2015.

\bibitem[GR79]{GRAUST}
H.~Grauert and R.~Remmert.
\newblock {\em Theory of Stein Spaces}.
\newblock Springer-Verlag, Berlin, 1979.

\bibitem[Gre82]{GREEN}
P.~Green.
\newblock On holomorphic graded manifolds.
\newblock {\em Proc. Amer. Math. Soc.}, 85(4):587--590, 1982.

\bibitem[Huy05]{HUYB}
D.~Huybrechts.
\newblock {\em Complex Geometry: An Introduction}.
\newblock Springer-Verlag, Berlin, 2005.

\bibitem[Kod86]{KS}
K.~Kodaira.
\newblock {\em Complex Manifolds and Deformation of Complex Structures}.
\newblock Springer, 1986.

\bibitem[Man88]{YMAN}
Y.~Manin.
\newblock {\em Gauge Fields and Complex Geometry}.
\newblock Springer-Verlag, 1988.

\bibitem[Oni99]{ONISHCLASS}
A.~L. Onishchik.
\newblock On the classification of complex analytic supermanifolds.
\newblock {\em Lobachevskii J. Math.}, pages 47--70, 1999.

\bibitem[Rot85]{ROTHDEF}
M.~Rothstein.
\newblock Deformations of complex supermanifolds.
\newblock {\em Proc. Amer. Math. Soc.}, 95(2):255--60, October 1985.

\bibitem[Vai90]{VAIN}
Y.~Vaintrob.
\newblock Deformation of complex superspaces and coherent sheaves on them.
\newblock {\em J. Soviet Math.}, 51(1):2140--2188, August 1990.

\bibitem[Vis04]{VIST}
A.~Vistoli.
\newblock Notes on {Grothendieck} topologies, fibered categories and descent
  theory.
\newblock available at:
  \href{https://arxiv.org/abs/math/0412512}{arXiv:math/0412512} [math.AG],
  2004.

\end{thebibliography}

\hfill
\\
\noindent
\small
\textsc{
Kowshik Bettadapura 
\\
\emph{Yau Mathematical Sciences Center} 
\\
Tsinghua University
\\
Beijing, 100084, China}
\\
\emph{E-mail address:} \href{mailto:kowshik@mail.tsinghua.edu.cn}{kowshik@mail.tsinghua.edu.cn}

\end{document}